\documentclass[a4paper,12pt]{article}

\voffset=-20mm \textheight=240mm
\hoffset=-8mm \textwidth=160mm
\voffset=-20mm \textheight=240mm
\hoffset=-8mm \textwidth=155mm

\usepackage[T2A]{fontenc}
\usepackage[utf8]{inputenc}
\usepackage[russian]{babel}
\usepackage{amsmath, amssymb, amsthm, eqnarray}
\usepackage{enumerate}
\usepackage{indentfirst}
\usepackage[colorlinks=true,linkcolor=blue,citecolor=blue]{hyperref}
\usepackage{tikz-cd}
\usetikzlibrary{cd}
\usetikzlibrary{matrix, arrows}

\usepackage{tikz}
\usepackage{tikz-3dplot} 
\usetikzlibrary{fadings}
\tikzfading[name=fade out,
            inner color=transparent!0,
            outer color=transparent!100]
\usetikzlibrary{calc,arrows,decorations.pathreplacing,fadings,3d,positioning}

\title{Собственные циклические симметрии многомерных цепных дробей.
      }
     \date{}

\author{И.\,А.\,Тлюстангелов}
%\date{}

%\renewcommand{\baselinestretch}{1.5}

\theoremstyle{definition}
\newtheorem{definition}{Определение}

\newtheorem*{notation*}{Обозначение}
\theoremstyle{remark}
\newtheorem{remark}{Замечание}
\newtheorem*{remark*}{Замечание}
\theoremstyle{plain}
\newtheorem{theorem}{Теорема}
\newtheorem*{theorem_dir*}{Теорема Дирихле}
\newtheorem{lemma}{Лемма}

\newtheorem{proposition}{Предложение}
\newtheorem{corollary}{Следствие}
\newtheorem*{statement*}{Утверждение}
\newtheorem*{corollary*}{Следствие}

\newtheorem{proof_m*}{Доказательство теоремы 1}

\DeclareMathOperator{\conv}{conv}

\DeclareMathOperator{\id}{id}

\renewcommand{\phi}{\varphi}

\renewcommand{\vec}[1]{\mathbf{#1}}

\newcommand{\R}{\mathbb{R}}
\newcommand{\Z}{\mathbb{Z}}
\newcommand{\Q}{\mathbb{Q}}

\newcommand{\cK}{\mathcal{K}}

\newcommand{\gA}{\mathfrak{A}}

\newcommand{\Sl}{\textup{SL}}
\newcommand{\Gl}{\textup{GL}}
\newcommand{\norm}{\textup{N}}
\newcommand{\trace}{\textup{Tr}}
\newcommand{\cf}{\textup{CF}}

\newcommand{\gal}{\textup{Gal}}

\setlength{\floatsep}{120pt plus 1.0pt minus 2.0pt}

\begin{document}

\maketitle

\begin{abstract}
Данная работа посвящена доказательству утверждения о существовании в произвольной размерности палиндромичных цепных дробей. Кроме того, доказывается критерий наличия у алгебраической цепной дроби собственной циклической палиндромической симметрии в случае $n=4$. В качестве многомерного обобщения цепных дробей рассматриваются полиэдры Клейна. 

\end{abstract}

\section{Введение}\label{intro}

Для понятия классической цепной дроби действительного числа известно несколько обобщений, одно из которых основывается на геометрической интерпретации цепной дроби, предложенной Клейном \cite{klein}. А именно, пусть $l_1,\ldots,l_n$ --- одномерные подпространства пространства $\R^n$, линейная оболочка которых совпадает со всем $\R^n$. Тогда гиперпространства, натянутые на всевозможные $(n-1)$-наборы из этих подпространств, разбивают $\R^n$ на $2^n$ симплициальных конусов. Будем обозначать множество этих конусов через
\[ \mathcal{C}(l_1, \ldots, l_n).\]
Симплициальный конус с вершиной в начале координат $\vec{0}$ будем называть \emph{иррациональным}, если линейная оболочка любой его гиперграни не содержит целых точек, кроме начала координат  $\vec{0}$.

\begin{definition}
  Пусть  $C$ --- иррациональный конус, $C \in \mathcal{C}(l_1, \ldots, l_n)$. Выпуклая оболочка $\cK(C) = \conv(C\cap\Z^{n}\setminus\{\vec{0}\} )$ и его граница  $\partial(\cK(C))$ называются соответственно \emph{полиэдром Клейна} и \emph{парусом Клейна}, соответствующими конусу $C$. Объединение же всех $2^n$ парусов
  \[\cf(l_1, \ldots, l_n) = {\underset{C \, \in \, \mathcal{C}(l_1, \ldots, l_n)}{\bigcup}} \partial(\cK(C))\]
  называется \emph{$(n-1)$-мерной цепной дробью}.
\end{definition}

Классическая теорема Лагранжа (\cite{lagrange}, \cite{khintchine_CF}) о цепных дробях утверждает, что число $\alpha$ является квадратичной иррациональностью тогда и только тогда, когда цепная дробь числа $\alpha$ периодична начиная с некоторого момента. Геометрическая интерпретация теоремы Лагранжа основывается на том факте, что вектор $(1, \alpha)$ является собственным вектором некоторого $\Sl_{2}(\Z)$ оператора с различными вещественными собственными значениями в том и только том случае, если число  $\alpha$ --- квадратичная иррациональность (см., например \cite{german_tlyust}). Данное утверждение обобщается естественным образом на случай произвольного $n$. Напомним, что оператор из $\Gl_{n}(\Z)$ с вещественными собственными значениями, характеристический многочлен которого неприводим над $\Q$, называется \emph{гиперболическим}.

\begin{definition}
  Пусть $l_1,\ldots,l_n$ --- собственные подпространства некоторого гиперболического оператора $A\in\Gl_n(\Z)$. Тогда $(n-1)$-мерная цепная дробь $\cf(l_1,\ldots,l_n)$ называется \emph{алгебраической}. Мы будем также говорить, что эта дробь \emph{ассоциирована} с оператором $A$ и писать $\cf(A)=\cf(l_1,\ldots,l_n)$. Множество всех $(n-1)$-мерных алгебраических цепных дробей будем обозначать $\gA_{n-1}$.
\end{definition}

Упомянутое выше обобщение выглядит следующим образом (подробности см., например, в \cite{german_tlyust_2})

\begin{proposition}\label{prop:more_than_pelle_n_dim}
  Числа $1,\alpha_1,\ldots,\alpha_{n-1}$ образуют базис некоторого вполне вещественного расширения $K$ поля $\Q$ тогда и только тогда, когда вектор $(1,\alpha_1,\ldots,\alpha_{n-1})$ является собственным для некоторого гиперболического оператора $A\in\Sl_n(\Z)$.
  При этом вектора $(1,\sigma_i(\alpha_1),\ldots,\sigma_i(\alpha_{n-1}))$, $i=1,\ldots,n$, где $\sigma_1(=\id),\sigma_2,\ldots,\sigma_n$ --- все вложения $K$ в $\R$, образуют собственный базис оператора $A$.
\end{proposition}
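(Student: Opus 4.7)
Доказательство распадается на две импликации; более содержательна первая, использующая теорему Дирихле о единицах. Пусть $K = \Q(\alpha_1, \ldots, \alpha_{n-1})$ --- вполне вещественное расширение $\Q$ степени $n$. Рассмотрим $\Z$-модуль полного ранга $M = \Z + \Z\alpha_1 + \ldots + \Z\alpha_{n-1} \subset K$ и его кольцо множителей $\cO = \{x \in K : xM \subseteq M\}$, являющееся, как нетрудно убедиться, порядком в $K$. По теореме Дирихле группа $\cO^*$ имеет ранг $n-1$; в ней найдётся единица $\varepsilon$, порождающая $K$ над $\Q$, поскольку единицы из любого собственного подполя $L \subsetneq K$ образуют в $\cO^*$ подгруппу строго меньшего ранга, а самих подполей --- конечное число. Переходя при необходимости к $\varepsilon^2$, считаем $N_{K/\Q}(\varepsilon) = 1$.

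Умножение на $\varepsilon$ задаёт $\Z$-линейный автоморфизм модуля $M$, представляемый в базисе $(1, \alpha_1, \ldots, \alpha_{n-1})$ некоторой матрицей $A \in \Sl_n(\Z)$. Её характеристический многочлен совпадает с минимальным многочленом $\varepsilon$ (в силу $\Q(\varepsilon) = K$) и потому неприводим над $\Q$, имеет степень $n$ и $n$ попарно различных вещественных корней $\sigma_1(\varepsilon), \ldots, \sigma_n(\varepsilon)$; значит, $A$ --- гиперболический оператор. Применение каждого вложения $\sigma_i : K \to \R$ к соотношениям вида $\varepsilon \alpha_j = \sum_k a_{jk} \alpha_k$, определяющим матрицу $A$, непосредственно показывает, что $\mathbf{v}_i = (1, \sigma_i(\alpha_1), \ldots, \sigma_i(\alpha_{n-1}))^\intercal$ является собственным вектором $A$ с собственным значением $\sigma_i(\varepsilon)$, так что эти векторы действительно образуют собственный базис $A$.

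Обратная импликация проще. Если $\vec{v} = (1, \alpha_1, \ldots, \alpha_{n-1})^\intercal$ --- собственный вектор гиперболической матрицы $A \in \Sl_n(\Z)$ с собственным значением $\lambda$, то характеристический многочлен $A$ неприводим над $\Q$ степени $n$ и имеет $n$ различных вещественных корней, откуда $K = \Q(\lambda)$ --- вполне вещественное поле степени $n$. Система $(A - \lambda I) \vec{v} = \vec{0}$ выражает $\alpha_j$ рационально через $\lambda$, так что $\alpha_j \in K$. Применяя к той же системе произвольное вложение $\sigma_i$, получаем, что $\mathbf{v}_i = (1, \sigma_i(\alpha_1), \ldots, \sigma_i(\alpha_{n-1}))^\intercal$ --- собственный вектор $A$ с собственным значением $\sigma_i(\lambda)$; из попарного различия $\sigma_i(\lambda)$ вытекает линейная независимость $\mathbf{v}_1, \ldots, \mathbf{v}_n$ в $\R^n$. Наконец, любое $\Q$-линейное соотношение $c_0 + c_1 \alpha_1 + \ldots + c_{n-1} \alpha_{n-1} = 0$ после применения всех $\sigma_i$ влечёт $\vec{c} \cdot \mathbf{v}_i = 0$ при всех $i$, откуда $\vec{c} = \vec{0}$ --- и $\Q$-линейная независимость $1, \alpha_1, \ldots, \alpha_{n-1}$ установлена.

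Главная техническая трудность --- обоснование существования «примитивной» единицы $\varepsilon \in \cO^*$ с условиями $\Q(\varepsilon) = K$ и $N_{K/\Q}(\varepsilon) = 1$: именно она обеспечивает неприводимость характеристического многочлена степени $n$ и, следовательно, гиперболичность $A$. Это стандартное, но требующее аккуратности следствие теоремы Дирихле и конечности числа промежуточных подполей расширения $K/\Q$.
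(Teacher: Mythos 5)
Для этого предложения в статье доказательства нет вовсе: автор лишь отсылает к работе \cite{german_tlyust_2} («подробности см., например, в...»), так что сравнивать ваше рассуждение приходится не с текстом статьи, а со стандартной схемой, на которую она опирается. Ваша схема именно такова и по существу верна: кольцо множителей $\cO$ модуля $M=\Z+\Z\alpha_1+\dots+\Z\alpha_{n-1}$ действительно является порядком в $K$, по теореме Дирихле группа $\cO^*$ имеет ранг $n-1$, примитивная единица существует, умножение на неё даёт оператор из $\Sl_n(\Z)$ с неприводимым характеристическим многочленом, а применение вложений $\sigma_i$ к определяющим соотношениям даёт собственный базис. Обратная импликация (рациональное выражение $\alpha_j$ через $\lambda$ из системы $(A-\lambda I)\vec v=\vec 0$ и вывод $\Q$-линейной независимости из невырожденности матрицы $(\vec v_1,\dots,\vec v_n)$) изложена корректно.

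Единственный шаг, который в буквальной формулировке не проходит, --- «переходя при необходимости к $\varepsilon^2$, считаем $N_{K/\Q}(\varepsilon)=1$». Возведение в квадрат может разрушить примитивность: например, $\varepsilon=\sqrt2+\sqrt3$ --- единица с $\Q(\varepsilon)=\Q(\sqrt2,\sqrt3)$, но $\Q(\varepsilon^2)=\Q(\sqrt6)$ --- собственное подполе. Поэтому порядок действий нужно поменять: подгруппа единиц нормы $1$ имеет в $\cO^*$ индекс не выше $2$, следовательно тот же ранг $n-1$, и уже внутри неё применить ваш же аргумент о том, что группа ранга $n-1$ не покрывается конечным числом подгрупп меньшего ранга, отвечающих собственным подполям. Это локальная починка, не затрагивающая остальной текст; с ней доказательство полно.
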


Будем называть \emph{группой симметрий} алгебраической цепной дроби $\cf(A)=\cf(l_1,\ldots,l_n)$ множество
\[
  \textup{Sym}_{\Z}\big(\cf(A)\big)=
  \Big\{ G\in\Gl_n(\Z) \ \Big|\ G\big(\cf(A)\big)=\cf(A) \Big\}.
\]
Из соображений непрерывности ясно, что для каждого $G\in\textup{Sym}_{\Z}\big(\cf(A)\big)$ однозначно определена перестановка $\sigma_G$, такая что
\begin{equation} \label{eq:repres}
  G(l_{i})=l_{\sigma_G(i)},\quad i=1,\dots,n.
\end{equation}
И обратно, если для $G\in\Gl_n(\Z)$ существует такая перестановка $\sigma_{G}$, что выполняются соотношения \ref{eq:repres}, то $G\in\textup{Sym}_{\Z}\big(\cf(A)\big)$.

\begin{definition}
  Оператор $G\in\textup{Sym}_{\Z}\big(\cf(A)\big)$, такой что $\sigma_G=\id$, будем называть \emph{симметрией Дирихле} дроби $\cf(A)\in\gA_{n-1}$.
\end{definition}

\begin{definition}
  Оператор $G\in\textup{Sym}_{\Z}\big(\cf(A)\big)$, не являющийся симметрией Дирихле, будем называть \emph{палиндромической симметрией} дроби $\cf(A)$. Если множество палиндромических симметрией цепной дроби непусто, то такую цепную дробь будем называть \emph{палиндромичной}.
\end{definition}
 
\begin{definition}
  Симметрия $G\in\textup{Sym}_{\Z}\big(\cf(A)\big)$ называется \emph{циклической}, если $\sigma_G$ --- циклическая перестановка.
\end{definition}
 
 Очевидно, что все циклические симметрии цепной дроби $\cf(A)$ являются палиндромическими симметриями этой цепной дроби.

\begin{definition}
  Палиндромическая симметрия $G\in\textup{Sym}_{\Z}\big(\cf(A)\big)$ называется \emph{собственной}, если у оператора $G$ существует неподвижная точка на некотором парусе цепной дроби $\cf(A)$. Палиндромическая симметрия $G\in\textup{Sym}_{\Z}\big(\cf(A)\big)$, не являющаяся собственной, называется  \emph{несобственной}.
\end{definition}

Пусть $\partial(\cK(C))$ --- один из $2^n$ парусов цепной дроби $\cf(A)$. Благодаря теореме Дирихле об алгебраических единицах все симметрии Дирихле цепной дроби $\cf(A)$ образуют группу, и у этой группы есть подгруппа, относительно действия которой на парусе $\partial(\cK(C))$ возникает компактная фундаментальная область (см., например, \cite{german_tlyust_2}, \cite{korkina_2dim}). Таким образом можно говорить о \emph{периоде} паруса $\partial(\cK(C))$. В данной работе нас будут интересовать собственные циклические симметрии $\cf(A)$.

Для $n=2$, то есть для одномерных цепных дробей, палиндромичность напрямую связана с симметричностью периодов обыкновенных цепных дробей квадратичных иррациональностей. Критерий симметричности периода цепной дроби квадратичной иррациональности восходит к результатам Галуа \cite{galois}, Лежандра \cite{legendre}, Перрона \cite{perron_book} и Крайтчика \cite{kraitchik}. В работе \cite{german_tlyust} дано геометрическое доказательство этого критерия. При этом приходится рассматривать как собственные, так и несобственные симметрии. Однако, при $n=3$ любая палиндромичная цепная дробь обладает собственной циклической симметрией (см. \cite{german_tlyust_2}).

В упомянутых работах \cite{german_tlyust} и \cite{german_tlyust_2} доказываются следующие критерии наличия собственных циклических симметрий у алгебраических цепных дробей:

\begin{proposition}\label{two_dimension}
 Пусть $\cf(l_1,l_2)\in\gA_1$ и пусть подпространство $l_1$ порождено вектором $(1, \alpha)$. Тогда $\cf(l_1, l_2)$ имеет собственную циклическую симметрию в том и только в том случае, если существует такое алгебраическое число $\omega$ степени $2$ со своим сопряжённым $\omega'$, что выполнено хотя бы одно из следующих условий:

  \textup{(а)} $(1, \alpha) \sim(1,\omega):\hskip 6.5mm \trace(\omega)=\omega + \omega' =0$;

  \textup{(б)} $(1, \alpha) \sim(1,\omega):\hskip 6.5mm \trace(\omega)=\omega + \omega' =1$.
  
\end{proposition}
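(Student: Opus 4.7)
The plan is to translate the existence of a proper cyclic symmetry into a Diophantine condition on $\alpha$ and then to normalize this data via the $\Gl_2(\Z)$-action on $(1,\alpha)$.

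First, since $n=2$, a cyclic symmetry of $\cf(l_1,l_2)$ corresponds to $\sigma_G=(1\,2)$, i.e., $G\in\Gl_2(\Z)$ swaps the eigenlines $l_1,l_2$. In the eigenbasis of $A$ such a $G$ is anti-diagonal with eigenvalues $\pm\sqrt{cc'}$; the condition $G\in\Gl_2(\Z)$ gives $\det G=-cc'=\pm 1$, and the existence of a real fixed direction (necessary for a fixed point on a sail) forces $\det G=-1$, eigenvalues $\pm 1$, and $G^2=\id$. Writing $G=\begin{pmatrix} a & b \\ c & -a \end{pmatrix}$ in the standard basis with $a^2+bc=1$, the condition $G(1,\alpha)^\intercal\parallel(1,\alpha')^\intercal$ expands to $c=-a(\alpha+\alpha')+b\alpha\alpha'$; eliminating $c$ yields the Diophantine equation
\[
  a^2-ab(\alpha+\alpha')+b^2\alpha\alpha'=1,
\]
namely, the norm form of $\Z+\Z\alpha$ represents $1$ at $(a,b)$. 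Moreover, the $+1$-eigenline of $G$ cannot lie in $l_1\cup l_2$ (else $\alpha\in\Q$), so it lies inside one of the four cones of $\cC(l_1,l_2)$ and its positive ray meets the corresponding sail at a fixed point of $G$; hence every such cyclic symmetry is automatically proper.

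For the forward direction, given a proper cyclic symmetry $G$, its $+1$-eigenvector $v_+$ is a primitive integer vector. Choose $M\in\Gl_2(\Z)$ with $Mv_+=(1,0)^\intercal$ and set $(1,\omega)^\intercal\sim M(1,\alpha)^\intercal$; then $MGM^{-1}$ has the reduced form $\begin{pmatrix} 1 & \ell \\ 0 & -1 \end{pmatrix}$ for some $\ell\in\Z$, and the swap relation becomes $\omega+\omega'=-\ell\,\omega\omega'$. A further translation $\omega\mapsto\omega+k$, realized by $\begin{pmatrix} 1 & 0 \\ k & 1 \end{pmatrix}\in\Gl_2(\Z)$, shifts $\omega+\omega'$ by $2k$, so by a suitable combined choice of $M$ (modulo the stabilizer of $(1,0)^\intercal$) and of $k$ one normalizes $\omega+\omega'$ to either $0$ or $1$; which of the two values is attained is a $\Gl_2(\Z)$-invariant of $\alpha$ determined by the discriminant of its minimal polynomial modulo $4$.

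Conversely, if $\omega+\omega'=0$, then $\begin{pmatrix} 1 & 0 \\ 0 & -1 \end{pmatrix}$ sends $(1,\omega)^\intercal$ to $(1,-\omega)^\intercal=(1,\omega')^\intercal$ and fixes the integer point $(1,0)$ on the sail of the corresponding cone; if $\omega+\omega'=1$, then $\begin{pmatrix} 1 & 0 \\ 1 & -1 \end{pmatrix}$ sends $(1,\omega)^\intercal$ to $(1,1-\omega)^\intercal=(1,\omega')^\intercal$ and fixes the integer point $(2,1)$. Conjugating either of these by the $\Gl_2(\Z)$-equivalence matrix that relates $(1,\alpha)$ to $(1,\omega)$ produces a proper cyclic symmetry of $\cf(l_1,l_2)$. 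I expect the main obstacle to be the normalization step in the forward direction, namely tracking the joint action of the stabilizer of $(1,0)^\intercal$ and the translation by $k$ on the pair $(\ell,\omega+\omega')$ and verifying that the parity of $\omega+\omega'$ is the complete residual invariant, in parallel with the classical reduction theory of indefinite binary quadratic forms; once this is handled, the two directions fit together to give the stated criterion.
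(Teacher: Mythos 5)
Your overall skeleton is the right one and matches the machinery this paper uses in higher dimensions (Lemma~\ref{about_properity} for properness via the eigenvalue $1$, then reduction of the symmetry to finitely many integer normal forms, as in Lemma~\ref{main_lem} and the proof of Theorem~\ref{main_t_4}); note the paper itself only quotes Proposition~\ref{two_dimension} from \cite{german_tlyust} rather than reproving it. Your converse direction is correct, and so is the reduction of a proper cyclic symmetry to an involution $G$ with $\det G=-1$, $\trace G=0$ (modulo a harmless sign slip: with $G=\left(\begin{smallmatrix}a&b\\ c&-a\end{smallmatrix}\right)$ one gets $c=a(\alpha+\alpha')+b\alpha\alpha'$ and the norm form $a^2+ab(\alpha+\alpha')+b^2\alpha\alpha'=1$).

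The genuine gap is exactly the step you flag: the normalization in the forward direction. After conjugating so that $v_+=(1,0)^\intercal$ and $G=\left(\begin{smallmatrix}1&\ell\\ 0&-1\end{smallmatrix}\right)$, the relation is $\omega+\omega'=-\ell\,\omega\omega'$, which for odd $\ell$ says nothing yet about $\trace(\omega)$ being an integer, let alone equal to $1$; and the translation $\omega\mapsto\omega+k$ does not repair this, because $\left(\begin{smallmatrix}1&0\\ k&1\end{smallmatrix}\right)$ does not stabilize $(1,0)^\intercal$ and destroys the triangular form. Concretely, conjugating $\left(\begin{smallmatrix}1&1\\ 0&-1\end{smallmatrix}\right)$ by $\left(\begin{smallmatrix}1&0\\ 1&1\end{smallmatrix}\right)$ yields $\left(\begin{smallmatrix}0&1\\ 1&0\end{smallmatrix}\right)$ and the condition $\omega\omega'=1$ --- a norm condition, not a trace condition --- so your claimed mechanism lands in the wrong normal form. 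The correct final move is a further conjugation inside the same $\Gl_2(\Z)$-conjugacy class: the two classes of reflections are distinguished by whether the primitive $\pm1$-eigenvectors span $\Z^2$ or an index-$2$ sublattice (the exact two-dimensional analogue of the case split in Lemma~\ref{main_lem}); choosing the basis $\vec z_1,\ \tfrac12(\vec z_1+\vec z_2)$ with $\vec z_2=G\vec z_1$ in the index-$2$ case brings $G$ to $\left(\begin{smallmatrix}1&0\\ 1&-1\end{smallmatrix}\right)$, whence $\omega'=1-\omega$ and $\trace(\omega)=1$. Finally, your parenthetical claim that which of the values $0,1$ occurs is a $\Gl_2(\Z)$-invariant of $\alpha$ read off from the discriminant mod $4$ is false and fortunately not needed: e.g.\ $\omega=\sqrt3$ has trace $0$ while $(1,\sqrt3)\sim\bigl(1,\tfrac{1-\sqrt3}{2}\bigr)$ and $\trace\bigl(\tfrac{1-\sqrt3}{2}\bigr)=1$, so both conditions can hold for the same $\alpha$; the proposition only asserts that at least one does.
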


\begin{proposition}\label{three_dimension}
 Пусть $\cf(l_1,l_2,l_3)\in\gA_2$ и пусть подпространство $l_1$ порождено вектором $(1, \alpha, \beta)$. Тогда $\cf(l_1, l_2, l_3)$ имеет собственную циклическую симметрию в том и только в том случае, если существует такое алгебраическое число $\omega$ степени $3$ со своими сопряжёнными $\omega'$ и $\omega''$, что выполнено хотя бы одно из следующих условий:

  \textup{(а)} $(1, \alpha, \beta)\sim(1, \omega, \omega'):\hskip 6.5mm \trace(\omega)=\omega + \omega' + \omega'' =0$;

  \textup{(б)} $(1, \alpha, \beta)\sim(1, \omega, \omega'):\hskip 6.5mm \trace(\omega)=\omega + \omega' + \omega'' =1$.

\noindent При выполнении утверждения \textup{(а)} или \textup{(б)} кубическое расширение $\Q(\alpha, \beta)$ будет нормальным.
\end{proposition}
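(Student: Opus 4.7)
Доказательство разбивается на достаточность и необходимость. Первая часть строится на явной конструкции оператора $G$ (в зависимости от значения трассы), вторая --- на приведении произвольной собственной циклической симметрии к нормальной форме, аналогично схеме для $n=2$ из~\cite{german_tlyust_2}. В обоих направлениях ключевую роль играет Предложение~\ref{prop:more_than_pelle_n_dim}, сопоставляющее собственным подпространствам гиперболического оператора вложения соответствующего вполне вещественного кубического поля в $\R$.

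\emph{Достаточность.} Пусть $(1,\alpha,\beta)\sim(1,\omega,\omega')$, где $\omega$ имеет степень~$3$ с сопряжёнными $\omega',\omega''$, и $t=\trace(\omega)\in\{0,1\}$. Так как $\omega'\in\Q(\omega)$, расширение $K=\Q(\omega)$ нормально с циклической группой Галуа порядка~$3$; пусть $\sigma$ --- её образующая с $\sigma(\omega)=\omega'$. По Предложению~\ref{prop:more_than_pelle_n_dim} собственные подпространства имеют вид $\R(1,\omega,\omega')$, $\R(1,\omega',\omega'')$, $\R(1,\omega'',\omega)$, где $\omega''=t-\omega-\omega'$. Оператор, циклически переводящий $(1,\omega,\omega')^{\intercal}\mapsto(1,\omega',\omega'')^{\intercal}$, задаётся матрицей
\[
G_t=\begin{pmatrix}1&0&0\\0&0&1\\t&-1&-1\end{pmatrix}\in\Sl_3(\Z).
\]
Непосредственная проверка даёт $G_t^3=I$ (и, значит, циклическую перестановку $l_1,l_2,l_3$) и показывает, что у $G_t$ есть примитивный целый неподвижный вектор $\vec{v}_0$ (именно $(1,0,0)^{\intercal}$ при $t=0$ и $(3,1,1)^{\intercal}$ при $t=1$). Вектор $\vec{v}_0$ не может лежать ни в одной из двумерных плоскостей $\spanned(l_i,l_j)$ (иначе $G_t$-инвариантность давала бы $\vec{v}_0=\vec{0}$), поэтому он попадает во внутренность какого-то конуса $C\in\cC(l_1,l_2,l_3)$, и луч $\R_{\geq 0}\vec{v}_0$ пересекает парус $\partial\cK(C)$ в одной точке, которая и служит неподвижной точкой симметрии.

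\emph{Необходимость.} Пусть $G$ --- собственная циклическая симметрия, а $\vec{v}_0$ --- примитивный целый вектор на её неподвижном луче. В собственном базисе оператора $A$ матрица $G$ мономиальна с произведением ненулевых элементов $c_1c_2c_3=\det G\in\{\pm1\}$; наличие собственного значения~$1$ влечёт $c_1c_2c_3=1$, откуда $\det G=1$ и $G^3=I$. Выбрав целый вектор $\vec{v}$ так, чтобы $(\vec{v}_0,\vec{v},G\vec{v})$ был базисом решётки $\Z^3$, с помощью соотношения $G^2\vec{v}+G\vec{v}+\vec{v}\in\Z\vec{v}_0$ приводим $G$ к виду
\[
M_k=\begin{pmatrix}1&0&k\\0&0&-1\\0&1&-1\end{pmatrix}.
\]
Замены $\vec{v}\mapsto\vec{v}+m\vec{v}_0$ и $\vec{v}\mapsto-\vec{v}$ меняют $k$ на $k+3m$ и $-k$, так что можно считать $k\in\{0,1\}$. В новом базисе $l_1=\R(1,\omega,\omega')$ для некоторых $\omega,\omega'\in\R$; из $Gl_1=l_2=\R(1,\sigma(\omega),\sigma(\omega'))$ при $k=0$ получаем $\sigma(\omega)=-\omega'$, откуда $\trace(\omega)=0$, а оператор $\mathrm{diag}(1,1,-1)\in\Gl_3(\Z)$ реализует переход к $\R(1,\omega,\sigma(\omega))$, что и даёт случай~(а). При $k=1$ проводится аналогичное рассуждение с предварительной заменой $\omega$ на подходящую $\Q$-линейную комбинацию $\tilde\omega$ элементов $1,\omega,\omega'$ так, чтобы выполнилось $\trace(\tilde\omega)=1$. Нормальность поля $\Q(\alpha,\beta)=\Q(\omega)$ вытекает автоматически, поскольку $\Q(\omega)$ содержит сопряжённое $\omega'$, а потому и $\omega''=t-\omega-\omega'$.

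\emph{Основная трудность.} Наиболее тонким, по всей видимости, окажется случай $k=1$ в необходимости: соотношение $G(1,\omega,\omega')^{\intercal}\parallel(1,\sigma(\omega),\sigma(\omega'))^{\intercal}$ не даёт $\trace(\omega)=1$ напрямую, и потребуется явно указать элемент $\tilde\omega\in K$ с трассой~$1$, а также соответствующий оператор из $\Gl_3(\Z)$, реализующий эквивалентность $\R(1,\omega,\omega')\sim\R(1,\tilde\omega,\sigma(\tilde\omega))$.
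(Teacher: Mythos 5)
Сразу отметим: в самой статье предложение~\ref{three_dimension} не доказывается, а цитируется из работы~\cite{german_tlyust_2}, поэтому ваше рассуждение естественно сравнивать с архитектурой доказательства четырёхмерного аналога (лемма~\ref{main_lem}, лемма~\ref{oper_eq_4d} и доказательство теоремы~\ref{main_t_4}). Часть про достаточность у вас корректна и полна и выдержана ровно в том же стиле, что и в статье: ваши матрицы $G_t$ --- трёхмерные аналоги операторов $G_1$, $G_2$, $G_3$ из параграфа~\ref{sec_4}, а аргумент с неподвижным лучом внутри конуса повторяет рассуждение из леммы~\ref{about_properity}.

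В части про необходимость есть два настоящих пробела. Во-первых, существование целого вектора $\vec v$, для которого $(\vec v_0,\vec v,G\vec v)$ --- базис $\Z^3$, лишь постулируется, тогда как именно это утверждение и есть основное содержание данной импликации (в размерности $4$ ему посвящена вся лемма~\ref{main_lem} с разбором семи случаев). В размерности $3$ оно верно и доказывается коротко: фактор $\Z^3/\Z\vec v_0$ --- свободный от кручения модуль $\Z$-ранга $2$ над кольцом $\Z[x]/(x^2+x+1)\cong\Z[\zeta_3]$, которое является кольцом главных идеалов, поэтому этот модуль свободен ранга один и порождается элементами $\bar v$, $\bar G\bar v$; любой целочисленный прообраз $\vec v$ элемента $\bar v$ годится. Но какое-то обоснование привести необходимо. Во-вторых, случай $k=1$ вы сами оставляете незавершённым, а он отвечает ровно за условие~(б), то есть за половину критерия. Подчеркну, что никакая замена $\omega\mapsto\tilde\omega$ здесь не нужна: при $k=1$ имеем $\vec z_1+\vec z_2+\vec z_3=\vec v_0$, где $\vec z_i=G^{i-1}\vec v$, поэтому тройка $(\vec z_1,\vec z_2,\vec z_3)$ сама является базисом $\Z^3$ (переход от базиса $(\vec v_0,\vec z_1,\vec z_2)$ унимодулярен); оператор $X\in\Gl_3(\Z)$, переводящий её в $(\vec e_1,\;\vec e_1+\vec e_3,\;\vec e_1+\vec e_2)$, сопрягает $G$ с вашей же матрицей $G_t$ при $t=1$, после чего из пропорциональности $G_1(1,\omega,\omega')^{\intercal}=(1,\,\omega',\,1-\omega-\omega')^{\intercal}$ и $(1,\sigma(\omega),\sigma(\omega'))^{\intercal}$ сразу получаются $\sigma(\omega)=\omega'$ и $\trace(\omega)=\omega+\omega'+(1-\omega-\omega')=1$, то есть в точности условие~(б). Это та же идея выбора <<хорошего>> целочисленного базиса, которая в статье реализована операторами $X_i$ в доказательстве теоремы~\ref{main_t_4}. С этими двумя дополнениями ваше доказательство становится полным.
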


В этих формулировках $\vec v_1\sim\vec v_2$ для векторов из $\R^n$ означает существование такого оператора $X\in\Gl_n(\Z)$ и такого ненулевого $\mu\in\R$, что $X\vec v_1=\mu\vec v_2$.

\section{Формулировки основных результатов}\label{results}

Первым результатом данной работы является доказательство существования палиндромичных цепных дробей в произвольной размерности.

\begin{theorem}\label{main_t_n}
  Для любого целого $n > 1$ существует $(n-1)$-мерная цепная дробь $\cf(A)$, обладающая собственной циклической палиндромической симметрией.
\end{theorem}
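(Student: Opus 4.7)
The plan is to construct, for each $n \geq 2$, a pair $(A, G)$ with the required properties, starting from a totally real cyclic Galois extension $K/\Q$ of degree $n$; such $K$ exists in every degree, for instance as the unique subfield of degree $n$ in $\Q(\zeta_p)$ for a prime $p \equiv 1 \pmod{2n}$ (existence of $p$ being Dirichlet's theorem on primes in arithmetic progressions). Let $\tau$ generate $\gal(K/\Q)$, take a normal basis generator $\alpha \in K$, and set $\omega := \alpha - \trace(\alpha)/n$, so that $\trace(\omega) = 0$. The first technical step will be to verify that $e_1 := 1$ and $e_k := \tau^{k-2}(\omega)$ for $k = 2, \ldots, n$ form a $\Q$-basis of $K$; this follows from a direct linear-algebra computation based on the normal basis property of $\alpha$. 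The identity $\tau^{n-1}(\omega) = -\omega - \tau(\omega) - \cdots - \tau^{n-2}(\omega)$ (from $\trace(\omega) = 0$) then shows that the matrix $G$ of $\tau$ in the basis $(e_1, \ldots, e_n)$ lies in $\Gl_n(\Z)$:
\[
G = \begin{pmatrix}
1 & 0 & \cdots & 0 & 0 \\
0 & 0 & \cdots & 0 & -1 \\
0 & 1 & \cdots & 0 & -1 \\
\vdots & \vdots & \ddots & \vdots & \vdots \\
0 & 0 & \cdots & 1 & -1
\end{pmatrix}.
\]

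\textbf{The operator $A$ and the symmetry.} Consider the $\Z$-lattice $\La := \bigoplus_{k=1}^{n} \Z e_k \subset K$; it is $\tau$-stable by construction, and its coefficient ring $\cO := \{x \in K \mid x\La \subseteq \La\}$ is a $\Z$-order of rank $n$ in $K$. By Dirichlet's unit theorem, $\cO^*$ has $\Z$-rank $n-1$, and since the units of $\cO^*$ lying in any proper subfield of $K$ form a union of subgroups of strictly smaller rank, I can pick $\epsilon \in \cO^*$ with $\Q(\epsilon) = K$. Taking $A$ to be the matrix of multiplication by $\epsilon$ in the basis $(e_1, \ldots, e_n)$ produces a hyperbolic operator $A \in \Gl_n(\Z)$, so $\cf(A)$ is well-defined. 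The identity $\tau(\epsilon x) = \tau(\epsilon)\tau(x)$ translates to $GAG^{-1} = A_{\tau(\epsilon)}$, where $A_y$ denotes the matrix of multiplication by $y$ in the same basis. Both $A$ and $A_{\tau(\epsilon)}$ are diagonalisable on $K \otimes \R = l_1 \oplus \cdots \oplus l_n$, with the $A$-eigenvalue on $l_j$ equal to $\tau^{j-1}(\epsilon)$ and the $A_{\tau(\epsilon)}$-eigenvalue on $l_j$ equal to $\tau^{j}(\epsilon)$; comparing eigenvalues on the two sides of $GA = A_{\tau(\epsilon)}G$ forces $G(l_j) = l_{j-1}$ (indices modulo $n$). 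Thus $\sigma_G$ is the cyclic permutation $j \mapsto j-1$, and $G$ is a cyclic palindromic symmetry of $\cf(A)$.

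\textbf{Properness and main obstacle.} For properness, the $1$-eigenspace of $G$ on $\R^n$ is $K^\tau \otimes \R = \R \cdot e_1$. Under the embedding isomorphism $K \otimes \R \cong \R^n$ (in which $A$ becomes diagonal and the eigenspaces $l_j$ become the coordinate axes), this fixed line corresponds to the diagonal $\R \cdot (1, \ldots, 1)$, lying in the interior of the all-positive orthant, which is one of the cones in $\mathcal{C}(l_1, \ldots, l_n)$. The ray from the origin in this direction begins outside the Klein polyhedron (near $\vec{0}$) and extends to infinity inside it, so by convexity it crosses the sail; that crossing point is a fixed point of $G$ on a sail of $\cf(A)$, so $G$ is proper. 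The main obstacle I expect is Step 1 — verifying that $(e_1, \ldots, e_n)$ is a $\Q$-basis of $K$, equivalently that $\omega$ generates the $(n-1)$-dimensional subspace $\{x \in K \mid \trace(x) = 0\}$ as a $\Q[\tau]$-module — for which the choice of $\omega$ from a normal basis generator is essential; a secondary technical point is the bookkeeping in the cyclic-symmetry argument, distinguishing between the coefficient and embedding descriptions of $K \otimes \R$ when identifying the eigenspaces of $A$.
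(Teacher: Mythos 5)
Your proposal is correct, but it reaches the theorem by a genuinely different construction than the paper. The paper also starts from a totally real cyclic extension $K/\Q$ obtained via Dirichlet's theorem and a normal basis generator $\omega$, but then normalizes \emph{multiplicatively}: it sets $\alpha_1=\sigma(\omega)/\omega$, so that $\norm(\alpha_1)=1$ and $\alpha_j=\prod_{k=0}^{j-1}\sigma^k(\alpha_1)=\sigma^j(\omega)/\omega$, invokes Proposition \ref{prop:more_than_pelle_n_dim} to get the hyperbolic $A\in\Sl_n(\Z)$ with eigenvector $(1,\alpha_1,\dots,\alpha_{n-1})$, and takes as symmetry the explicit cyclic-shift matrix $H$; properness then falls out of Lemma \ref{about_properity}, because the multipliers are $\mu_{i+1}=\sigma^{i-1}(\alpha_1)$ and their product is $\norm(\alpha_1)=1$. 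You instead normalize \emph{additively} ($\trace(\omega)=0$), take the integral basis $1,\omega,\tau(\omega),\dots,\tau^{n-2}(\omega)$, realize the symmetry as the matrix of the Galois automorphism $\tau$ itself (integral precisely because of the trace relation), and build $A$ by hand as multiplication by a unit of the multiplier order of $\La$ via Dirichlet's unit theorem, rather than citing Proposition \ref{prop:more_than_pelle_n_dim}. Your properness argument (the $1$-eigenline of $G$ is the diagonal in the Minkowski coordinates, hence interior to an orthant, and the fixed ray must cross the sail) is the same geometric idea as the second half of Lemma \ref{about_properity}, spelled out in more detail. What each approach buys: the paper's norm-one class $\mathbf{CF}$ plugs directly into its general Lemmas \ref{about_properity} and \ref{oper_eq_n}, while your trace-zero construction is more self-contained and produces exactly the examples that reappear as condition (1) of Theorem \ref{main_t_4} and condition (а) of Propositions \ref{two_dimension}--\ref{three_dimension}. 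Two small points you should make explicit when writing this up: first, the basis claim for $(1,\omega,\dots,\tau^{n-2}(\omega))$ reduces to the observation that a normal basis generator $\alpha$ necessarily has $\trace(\alpha)\neq 0$ (otherwise $\sum_k\tau^k(\alpha)=0$ contradicts independence), so $1=\trace(\alpha)^{-1}\sum_k\tau^k(\alpha)$ has a nonzero $\tau^{n-1}(\alpha)$-component; second, one should note (as the paper tacitly does) that hyperbolicity of $A$ makes all $2^n$ cones irrational, so the sails are defined and the crossing point exists.
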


 Второй результат данной работы обобщает предложения \ref{two_dimension} и \ref{three_dimension} на случай $n=4$:
\begin{theorem}\label{main_t_4}
  Пусть $\cf(l_1,l_2,l_3,l_4)\in\gA_3$ и пусть подпространство $l_1$ порождено вектором $(1, \alpha, \beta, \gamma)$. Тогда $\cf(l_1, l_2, l_3, l_4)$  имеет собственную циклическую симметрию в том и только в том случае, если существует такое алгебраическое число $\omega$ степени $4$ со своими сопряжёнными $\omega'$, $\omega''$, $\omega'''$ что выполнено хотя бы одно из следующих условий:

  \textup{(1)} $(1, \alpha, \beta, \gamma)\sim(1, \omega, \omega', \omega''):\hskip 14.5mm \trace(\omega)=\omega + \omega' + \omega'' + \omega'''=0$;
  
  \textup{(2)} $(1, \alpha, \beta, \gamma)\sim(1, \omega, \omega', \omega''):\hskip 14.5mm \trace(\omega)=\omega + \omega' + \omega'' + \omega'''=1$;
   
  \textup{(3)} $(1, \alpha, \beta, \gamma)\sim(1, \omega, \omega', \omega''):\hskip 14.5mm \trace(\omega)=\omega + \omega' + \omega'' + \omega'''=2$;

  \textup{(4)} $(1, \alpha, \beta, \gamma)\sim(1, \omega, \omega', \frac{\omega + \omega''}{2}):\hskip 10.3mm \trace(\omega)=\omega + \omega' + \omega'' + \omega'''=0$;
  
  \textup{(5)} $(1, \alpha, \beta, \gamma)\sim(1, \omega, \omega', \frac{\omega + \omega''}{2}):\hskip 10.3mm \trace(\omega)=\omega + \omega' + \omega'' + \omega'''=2$;
   
  \textup{(6)} $(1, \alpha, \beta, \gamma)\sim(1, \omega, \omega', \frac{\omega + \omega'' + 1}{2}):\hskip 6.5mm \trace(\omega)=\omega + \omega' + \omega'' + \omega'''=0$;

  \textup{(7)} $(1, \alpha, \beta, \gamma)\sim(1, \omega, \omega', \frac{\omega + \omega'' + 1}{2}):\hskip 6.5mm \trace(\omega)=\omega + \omega' + \omega'' + \omega'''=2$.
  
  \noindent При выполнении какого-то из утверждений \textup{(1)} -- \textup{(7)} расширение $\Q(\alpha, \beta, \gamma)$ степени $4$ будет нормальным.

\end{theorem}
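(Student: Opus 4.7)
The plan is to follow the strategy of Propositions \ref{two_dimension} and \ref{three_dimension}. The richer list of cases here reflects the subgroup structure of the Galois group $\Z/4\Z$ --- in particular its unique index-$2$ subgroup --- which is responsible for the half-integer normal forms (4)--(7).

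For the necessity direction I would argue as follows. If $G\in\Gl_4(\Z)$ is a proper cyclic symmetry of $\cf(A)$ with $\sigma_G=(1\,2\,3\,4)$, then in the common eigenbasis $(\vec v_1,\dots,\vec v_4)$ of $A$ the operator $G$ acts by $G\vec v_i=c_i\vec v_{\sigma_G(i)}$, so its characteristic polynomial is $\lambda^4-c_1c_2c_3c_4$. The existence of a nontrivial fixed point on a sail forces $1$ to be an eigenvalue, hence $c_1c_2c_3c_4=1$ and $\operatorname{spec}(G)=\{1,-1,i,-i\}$. Then $G^4$ is a symmetry of Dirichlet type and $G$ realizes $\sigma_G$ as an order-$4$ element of $\gal(K/\Q)$ for $K=\Q(\alpha,\beta,\gamma)$; in particular $K/\Q$ is Galois with cyclic Galois group, yielding the final normality claim. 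Fix a primitive element $\omega\in K$ with Galois-cyclically-ordered conjugates $\omega,\omega',\omega'',\omega'''$. The integrality of $G$ (i.e.\ $G\in\Gl_4(\Z)$ rather than merely $\Gl_4(\R)$) translates into the $\sigma$-stability of the $\Z$-module $M=\Z\langle 1,\alpha,\beta,\gamma\rangle\subset K$. One classifies such $M$ up to $\Gl_4(\Z)$-change of $\Z$-basis, integer translation $\omega\mapsto\omega+k$ (which shifts $\trace(\omega)$ by $4k$) and negation $\omega\mapsto-\omega$ (realized by $\mathrm{diag}(1,-1,-1,-1)\in\Gl_4(\Z)$, which identifies $\trace\equiv 1$ with $\trace\equiv 3\pmod 4$). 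Three families of $\sigma$-stable lattices appear: $\Z[\omega]$ itself, giving cases (1)--(3); and its two index-$2$ overlattices adjoining the half partial trace $(\omega+\omega'')/2$ over the unique index-$2$ subgroup of $\gal(K/\Q)$, or its shift $(\omega+\omega''+1)/2$, giving (4)--(5) and (6)--(7) respectively. The restriction $\trace(\omega)\in\{0,2\}$ in the half-integer families comes from the parity constraint that $(\omega+\omega'')/2\in\gO_K$ forces $\omega+\omega''$ to have even trace over $\Q$.

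For the sufficiency direction I would, in each of the seven cases, exhibit the explicit matrix $G=VCV^{-1}\in\Gl_4(\Z)$, where $V$ has columns $\vec v_1,\dots,\vec v_4$ and $C$ is the $4\times 4$ cyclic-shift matrix. Using the trace identity $\omega'''=\trace(\omega)-(\omega+\omega'+\omega'')$ (together with its half-integer analogue in cases (4)--(7)), the images $G\vec e_j$ of the standard basis vectors can be written as integer combinations of $\vec e_1,\dots,\vec e_4$; this is exactly where the trace hypothesis and the specific form of $\gamma$ are used. The fixed point is $\vec p=\vec v_1+\vec v_2+\vec v_3+\vec v_4=(4,\trace\alpha,\trace\beta,\trace\gamma)\in\Z^4$, a vector lying in the interior of one of the $2^4$ simplicial cones of $\mathcal{C}(l_1,\dots,l_4)$; its ray meets the corresponding sail at the required fixed point of $G$.

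The main obstacle I anticipate is the classification step: proving that the seven families exhaust all $\sigma$-stable full sublattices of $\gO_K$ containing $1$, and that they are pairwise inequivalent under the allowed moves (change of basis in $\Gl_4(\Z)$, integer translation of $\omega$, and negation). Once this enumeration is complete, both the necessity verification of the trace conditions and the explicit sufficiency constructions reduce to routine linear algebra in the eigenbasis of $A$.
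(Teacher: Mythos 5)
Your sufficiency direction coincides with the paper's: the paper exhibits seven explicit integer matrices $G_1,\dots,G_7$ and verifies directly (Lemma \ref{oper_eq_4d}) that $G_i$ is a proper cyclic symmetry exactly of the fractions in the class $\mathbf{CF}_i$, the fixed point being the ray through $\vec l_1+\vec l_2+\vec l_3+\vec l_4$, just as you describe. Your necessity direction, however, takes a genuinely different route --- an algebraic classification of $\sigma$-stable full $\Z$-modules in $K$ up to $\Gl_4(\Z)$-equivalence, translation and negation --- whereas the paper works geometrically: it proves (Lemma \ref{rational_subspace}) that the eigenspaces $l_+$, $l_-$ and the $2$-dimensional invariant plane $L$ of $G$ are rational, passes to the nearest rational invariant hyperplane $S_1$ parallel to $l_-+L$, and runs an iterative ``shrinking parallelogram'' procedure in $S_1$ to produce an orbit $\vec z_1,\dots,\vec z_4$ whose associated configuration of midpoints and quarter-points is minimal; a seven-way case analysis of which of these auxiliary points are integral (Lemma \ref{main_lem}) then yields seven explicit bases of $\Z^4$, after which an explicit conjugation $X_iGX_i^{-1}=G_i$ finishes the proof.

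The genuine gap in your proposal is precisely the step you flag yourself: the claim that the $\sigma$-stable modules fall into exactly the three families $\Z\langle 1,\omega,\omega',\omega''\rangle$, its overlattice by $(\omega+\omega'')/2$, and its overlattice by $(\omega+\omega''+1)/2$, with the stated trace restrictions, is asserted rather than proved. This enumeration \emph{is} the theorem; without it the necessity direction is empty. It is not a routine verification: one must show minimality/exhaustiveness of the list (the paper needs a nontrivial descent argument plus a parity computation in coordinates to rule out, e.g., the configuration where none of $\vec z_i^{R},\vec z_i^{Q}$ is integral while the plane $\pi$ is rational), and one must handle the fact that the module $M=\Z\langle 1,\alpha,\beta,\gamma\rangle$ is only determined up to scaling by elements of $K^*$, so the classification is of modules up to similarity, not of sublattices of $\gO_K$ containing $1$. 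Moreover, your stated reason for the restriction $\trace(\omega)\in\{0,2\}$ in cases (4)--(7) does not work as written: $\trace_{K/\Q}(\omega+\omega'')=2\trace_{K/\Q}(\omega)$ is always even, so the ``even trace of $\omega+\omega''$'' condition is vacuous; and even the corrected version (integrality of $\trace_{F/\Q}\big((\omega+\omega'')/2\big)$ over the quadratic subfield $F$) presupposes that $(\omega+\omega'')/2$ is an algebraic integer, which is not given --- $\omega$ is only an algebraic number of degree $4$. The actual source of the constraint in the paper is the geometric dichotomy of whether the center $\vec p=\frac14(\vec z_1+\vec z_2+\vec z_3+\vec z_4)$ and the half-sums are lattice points, which cannot be recovered from a trace parity argument alone.
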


Теорему \ref{main_t_n} мы докажем в параграфе \ref{sec_3}, а в параграфе \ref{sec_4} мы докажем теорему  \ref{main_t_4}. 

\begin{remark}
  В размерностях $n=2, 3$ все палиндромические симметрии цепной дроби $\cf(A)$  являются циклическими симметриями этой цепной дроби, поэтому в формулировках предложений  \ref{two_dimension} и \ref{three_dimension} термин "циклическая" можно заменить на термин "палиндромическая". В размерности $n=4$ это уже не так. Однако, полная классификация палиндромических симметрий слишком громоздка для данной статьи. Этому вопросу будет посвящено отдельное исследование.
\end{remark}

\section{Существование палиндромических симметрий для конечных вполне вещественных циклических расширений Галуа}\label{sec_3}

Здесь и далее будем обозначать через $\norm(\alpha)$ и $\trace(\alpha)$ соответственно норму $\norm_{\Q(\alpha)/\Q}(\alpha)$ и след $\trace_{\Q(\alpha)/\Q}(\alpha)$ алгебраического числа $\alpha$. Если задана дробь $\cf(l_1, \ldots, l_n)=\cf(A)\in\gA_{n-1}$, будем считать, что подпространство $l_1$ порождается вектором  $\vec l_1=(1,\alpha_1, \dots, \alpha_{n-1})$ (данное допущение корректно в силу предложения \ref{prop:more_than_pelle_n_dim}). Тогда из предложения \ref{prop:more_than_pelle_n_dim} следует, что числа $1,\alpha_1, \dots, \alpha_{n-1}$ образуют базис поля $K=\Q(\alpha_1, \dots, \alpha_{n-1})$ над $\Q$ и каждое $l_i$ порождается вектором $\vec l_i=(1,\sigma_i(\alpha_1), \dots, \sigma_i(\alpha_{n-1}))$, где $\sigma_1(=\id),\sigma_2, \dots, \sigma_n$ --- все вложения $K$ в $\R$.

Рассмотрим матрицу вида
  \begin{equation}\label{matrix_ord_n_0}
  \begin{pmatrix}
     0 & \mu_{1} & 0 & \dots & 0 & 0\\
    0 & 0 & \mu_{2} & \dots & 0 & 0 \\
    0 & 0 & 0 & \dots & 0 & 0 \\
    \dots &  \dots & \dots & \dots &\dots &\dots \\
    0 & 0 & 0 & \dots & 0 & \mu_{n-1}\\
    \mu_{n} & 0 & 0 & \dots & 0 & 0\\
   \end{pmatrix}.
\end{equation}
\begin{lemma}\label{about_properity}
 Пусть $G$ --- циклическая симметрия $\cf(l_1, \ldots, l_n) \in\gA_{n-1}$ и матрица оператора $G$ в базисе $\vec{l}_{1},  \ldots, \vec{l}_{n}$ имеет вид \ref{matrix_ord_n_0}. Тогда $G$ является собственной циклической симметрией дроби $\cf(l_1, \ldots, l_n)$ в том и только том случае, если $\mu_{1}\mu_{2}\ldots\mu_{n} = 1$.
 \end{lemma}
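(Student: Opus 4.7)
My plan is to work in the basis $\vec l_1,\dots,\vec l_n$, in which the signs of the coordinates of a vector determine which simplicial cone of $\cC(l_1,\dots,l_n)$ contains it, and to reduce the problem to computing $\ker(G-\id)$ explicitly from the matrix \eqref{matrix_ord_n_0}. The key observation is that any $G$-fixed point on a sail is in particular a nonzero element of $\ker(G-\id)$, so properness amounts to this kernel being both nontrivial \emph{and} situated so as to meet some sail.

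Step~1, computing the fixed line. Reading the columns of \eqref{matrix_ord_n_0} one has $G\vec l_1=\mu_n\vec l_n$ and $G\vec l_j=\mu_{j-1}\vec l_{j-1}$ for $j=2,\dots,n$. Writing $\vec v=\sum_j x_j\vec l_j$, the equation $G\vec v=\vec v$ becomes the cyclic system
\[
  x_k=\mu_k x_{k+1}\ (k=1,\dots,n-1),\qquad x_n=\mu_n x_1.
\]
Telescoping yields $x_1=(\mu_1\mu_2\cdots\mu_n)\,x_1$, so the fixed subspace is trivial unless $\mu_1\cdots\mu_n=1$, in which case it is the line spanned by
\[
  \vec u=(\mu_1\cdots\mu_{n-1},\;\mu_2\cdots\mu_{n-1},\;\dots,\;\mu_{n-1},\;1),
\]
whose coordinates are all nonzero because $G\in\Gl_n(\Z)$ forces every $\mu_j$ to be nonzero.

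Step~2, the easy direction. If $G$ has a fixed point on some sail $\partial\cK(C)$, that point is a nonzero element of $\ker(G-\id)$, and its existence forces $\mu_1\cdots\mu_n=1$.

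Step~3, the converse. Assume $\mu_1\cdots\mu_n=1$. Since every coordinate of $\vec u$ in the basis $\vec l_1,\dots,\vec l_n$ is nonzero, the line $L=\R\vec u$ lies in the union of the interiors of two opposite cones of $\cC(l_1,\dots,l_n)$; pick one of them and call it $C$. The ray $\R_{>0}\vec u\subset\interior C$ starts at $\vec 0\notin\cK(C)$ and escapes to infinity inside $C$, so it must cross $\partial\cK(C)$ at some point $\vec v_{*}$. Then $\vec v_{*}\in L$ is nonzero and hence $G$-fixed, proving properness.

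I expect the only non-linear-algebraic point to be the last one, that the ray meets the sail; this follows from standard facts about Klein polyhedra (for instance, $\vec p+C\subset\cK(C)$ for any $\vec p\in C\cap\Z^n\setminus\{\vec 0\}$, so a ray into $\interior C$ is eventually contained in $\cK(C)$ and must cross its boundary on the way in). Everything else is bookkeeping: a telescoping on the fixed-point system, plus the observation that in the eigenbasis the $2^n$ cones of $\cC(l_1,\dots,l_n)$ are precisely the sign-orthants, so a vector with all coordinates nonzero automatically lies in the interior of a cone.
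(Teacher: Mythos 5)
Your proof is correct, and for the ``if'' direction it is essentially the paper's argument made explicit: both identify the eigenvalue-$1$ eigenvector of the matrix \ref{matrix_ord_n_0}, observe that all of its coordinates in the basis $\vec{l}_1,\dots,\vec{l}_n$ are nonzero so that the fixed line lies in the interior of some cone of $\mathcal{C}(l_1,\dots,l_n)$, and conclude that the corresponding ray meets the sail --- a final step the paper leaves entirely implicit, whereas you at least sketch why the ray eventually enters $\cK(C)$ and must therefore cross $\partial(\cK(C))$. The ``only if'' direction is where you genuinely diverge: the paper argues that a proper symmetry fixes the cone containing its fixed point, hence permutes the extremal rays $\varepsilon_i\vec{l}_i$ of that cone with positive multipliers, which yields $\mu_1\cdots\mu_n>0$ and then $\mu_1\cdots\mu_n=1$ via the implicit fact that $|\mu_1\cdots\mu_n|=|\det G|=1$ for $G\in\Gl_n(\Z)$. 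You instead note that a fixed point on a sail is a nonzero element of $\ker(G-\id)$, and your telescoping computation shows this kernel is trivial unless $\mu_1\cdots\mu_n=1$. Your route is shorter and avoids both the sign bookkeeping and the appeal to the determinant; the paper's route has the side benefit of recording how $G$ acts on the rays of the fixed cone, which is the form in which the lemma is actually invoked later (e.g.\ in the relation \ref{indexes}). Both arguments are valid, and the remaining appeals to standard properties of Klein polyhedra are no stronger than those the paper itself makes tacitly.
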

 
 \begin{proof}
Пусть $G$ является собственной циклической симметрией дроби $\cf(l_1, \ldots, l_n)$. Тогда существуют такие числа $\varepsilon_{1}, \varepsilon_{2}, \ldots, \varepsilon_{n}$ из множества $\{-1, 1\}$, что
\[G(\varepsilon_{1}\vec{l}_1, \varepsilon_{2}\vec{l}_2, \ldots, \varepsilon_{n}\vec{l}_{n}) =  \big(\mu_{n}\varepsilon_{1}\vec l_{n},\mu_1\varepsilon_{2}\vec l_1, \mu_2\varepsilon_{3}\vec l_2, \ldots, \mu_{n-1}\varepsilon_{n}\vec l_{n-1}\big),\]
и выполняются неравенства
\[\mu_1\frac{\varepsilon_{2}}{\varepsilon_{1}} > 0, \, \, \mu_2\frac{\varepsilon_{3}}{\varepsilon_{2}} > 0, \, \, \ldots, \, \, \mu_{n-1}\frac{\varepsilon_{n}}{\varepsilon_{n-1}} > 0, \, \, \mu_{n}\frac{\varepsilon_{1}}{\varepsilon_{n}} > 0.\]
Стало быть, $\mu_{1}\mu_{2}\ldots\mu_{n} > 0$, а значит $\mu_{1}\mu_{2}\ldots\mu_{n}  = 1$.

Если $\mu_{1}\mu_{2}\ldots\mu_{n} = 1$, то оператор $G$ имеет собственное направление, которое соответствует собственному значению $1$ и лежит внутри некоторого конуса $C \in \mathcal{C}(l_1, \ldots, l_n)$. 
 \end{proof}
 
Мы будем обозначать через $\gA_{n-1}'$ множество всех $(n-1)$-мерных алгебраических цепных дробей, для которых поле $K$ из предложения \ref{prop:more_than_pelle_n_dim} --- вполне вещественное циклическое расширение Галуа. Пусть $\sigma$ --- образующая группы Галуа $\gal(K/\Q)$. Также мы выбираем такую нумерацию прямых $l_1, \ldots, l_n$, что если через $\big(\vec l_1,\vec l_2, \dots, \vec l_{n-1}, \vec l_{n} \big)$ обозначить матрицу со столбцами $\vec l_1,\vec l_2, \dots, \vec l_{n-1}, \vec l_{n}$, то получим
 \[
  \big(\vec l_1,\vec l_2, \dots, \vec l_{n-1}, \vec l_{n} \big)=
  \begin{pmatrix}
    1 & 1 & \dots & 1 & 1 \\
    \alpha_1 & \sigma(\alpha_1) & \dots & \sigma^{n-2}(\alpha_1) & \sigma^{n-1}(\alpha_1) \\
    \alpha_2 & \sigma(\alpha_2) & \dots & \sigma^{n-2}(\alpha_2) & \sigma^{n-1}(\alpha_2) \\
    \dots & \dots & \dots & \dots & \dots \\
    \alpha_{n-1} & \sigma(\alpha_{n-1}) & \dots & \sigma^{n-2}(\alpha_{n-1}) & \sigma^{n-1}(\alpha_{n-1})
  \end{pmatrix}.
\]
Определим следующий класс $(n-1)$-мерных алгебраических цепных дробей:
\begin{align*}
  \mathbf{CF} & = \Big\{ \cf(l_1, \ldots, l_n)\in\gA_{n-1}' \,\Big|\, \alpha_{j} = \prod\limits_{k = 0}^{j-1}\sigma^{k}(\alpha_1), \, \ \norm(\alpha_1)=1\}.
\end{align*}
Положим
\[
  H =
  \begin{pmatrix}
    0 & 1 & 0 & \dots & 0 & 0\\
    0 & 0 & 1 & \dots & 0 & 0 \\
    0 & 0 & 0 & \dots & 0 & 0 \\
    \dots &  \dots & \dots & \dots &\dots &\dots \\
    0 & 0 & 0 & \dots & 0 & 1\\
    1 & 0 & 0 & \dots & 0 & 0\\
  \end{pmatrix}.\]
\begin{lemma}\label{oper_eq_n}
  Пусть $\cf(l_1, \ldots, l_n)\in\gA_{n-1}$. Тогда следующие два утверждения эквивалентны:
  
 \textup{(a)} $\cf(l_{1}, \ldots, l_{n})$ принадлежит классу $\mathbf{CF}$;
 
 \textup{(б)} $H$ --- собственная палиндромическая симметрия $\cf(l_1, \ldots, l_n)$ и 
 \[\sigma_{H} = (1, 2, \ldots, n-1, n).\]
\end{lemma}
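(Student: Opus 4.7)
План состоит в том, чтобы доказать эквивалентность (а) $\Leftrightarrow$ (б) прямым покомпонентным сравнением векторов $H\vec{l}_j$ и кратных вектора $\vec{l}_{j+1}$. Наблюдение, на которое будет опираться вся выкладка: оператор $H$ циклически сдвигает координаты, то есть $H(v_1, \ldots, v_n)^{\intercal} = (v_2, v_3, \ldots, v_n, v_1)^{\intercal}$.

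В более простом направлении (а) $\Rightarrow$ (б) я предположу, что $\cf(l_1, \ldots, l_n) \in \mathbf{CF}$, и прямой подстановкой проверю тождество $H\vec{l}_j = \sigma^{j-1}(\alpha_1) \cdot \vec{l}_{j+1}$ (с индексами по модулю $n$). Покомпонентное сравнение сведётся к соотношениям $\alpha_k = \alpha_1 \cdot \sigma(\alpha_{k-1})$ при $k = 2, \ldots, n-1$ (следуют из формулы $\alpha_k = \prod_{i=0}^{k-1}\sigma^i(\alpha_1)$) и $\alpha_1 \cdot \sigma(\alpha_{n-1}) = 1$ (равносильно $\norm(\alpha_1) = 1$). Отсюда $H(l_j) = l_{j+1}$, то есть $\sigma_H = (1, 2, \ldots, n)$; собственность $H$ вытекает из леммы \ref{about_properity}, поскольку $\prod_{j=1}^n \sigma^{j-1}(\alpha_1) = \norm(\alpha_1) = 1$.

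Основную трудность составит обратная импликация (б) $\Rightarrow$ (а): понадобится установить не только явное выражение для чисел $\alpha_k$, но и \emph{нормальность} и \emph{цикличность} расширения $K/\Q$, поскольку в условии лишь $\cf(l_1, \ldots, l_n) \in \gA_{n-1}$. Из условия $H(l_j) = l_{j+1}$ я запишу $H\vec{l}_j = \mu_j \vec{l}_{j+1}$ с некоторыми $\mu_j \in \R$. Покомпонентное сравнение даст: $\mu_j = \sigma_j(\alpha_1)$, $\sigma_{j+1}(\alpha_k) = \sigma_j(\alpha_{k+1}/\alpha_1)$ при $1 \leq k \leq n-2$ и $\sigma_{j+1}(\alpha_{n-1}) = \sigma_j(1/\alpha_1)$. Взяв $j=1$, получу $\sigma_2(\alpha_k) \in K$ для всех $k$, то есть $\sigma_2(K) \subseteq K$; в силу конечномерности $\sigma_2$ будет автоморфизмом поля $K$, а значит $K/\Q$ --- нормальное расширение. Те же равенства при произвольном $j$ перепишутся как $\sigma_{j+1} = \sigma_j \circ \sigma_2$, и индукция даст $\sigma_j = \sigma_2^{j-1}$; поскольку различных вложений ровно $n$, порядок $\sigma_2$ равен $n$, и $\gal(K/\Q)$ --- циклическая группа. Положив $\sigma = \sigma_2$, индукцией из $\alpha_{k+1} = \alpha_1 \cdot \sigma(\alpha_k)$ получу $\alpha_k = \prod_{i=0}^{k-1}\sigma^i(\alpha_1)$, а из равенства $\sigma(\alpha_{n-1}) = 1/\alpha_1$ --- соотношение $\norm(\alpha_1) = 1$. Это вместе и даст $\cf(l_1, \ldots, l_n) \in \mathbf{CF}$.
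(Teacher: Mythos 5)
Ваше рассуждение верно и по существу совпадает с доказательством в статье: в обе стороны используется покомпонентное сравнение $H\vec{l}_j$ с кратным $\vec{l}_{j+1}$, лемма \ref{about_properity} для собственности и индукция, дающая $\sigma_{j}=\sigma_2^{j-1}$. Единственное отличие --- вы явно прописываете вывод нормальности и цикличности расширения $K/\Q$ из включения $\sigma_2(K)\subseteq K$, что в статье оставлено неявным, но это лишь уточнение того же аргумента.
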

\begin{proof}
  В силу леммы \ref{about_properity} оператор $H\in\Gl_n(\Z)$ является собственной палиндромической симметрией $\cf(A)$ и $\sigma_{H} = (1, 2, \ldots, n-1, n)$ тогда и только тогда, когда существуют такие действительные  числа $\mu_1,\mu_2, \ldots,\mu_n$, что $\mu_1\mu_2 \ldots \mu_n = 1$  и
 \begin{equation}\label{indexes}
 H\big(\vec l_{1},\vec l_{2}, \ldots, \vec l_{n-1}, \vec l_{n}\big)=\big(\mu_2\vec l_{2},\mu_3\vec l_{3},\ldots,\mu_n\vec l_{n},\mu_1\vec l_{1}\big).
 \end{equation}
 
   Пусть $\cf(l_{1}, \ldots, l_{n}) \in \mathbf{CF}$. Тогда
 \[  \big(\vec l_{1},\vec l_{2}, \ldots, \vec l_{n-1}, \vec l_{n}\big) =  \]
 \[ =
   \left( \begin{smallmatrix}
      1\phantom{-}\phantom{-} & 1\phantom{-}\phantom{-} & \ldots\phantom{-}\phantom{-} & 1\phantom{-}\phantom{-} & 1 \\
      \alpha_1\phantom{-}\phantom{-} & \sigma(\alpha_1)\phantom{-}\phantom{-} & \ldots\phantom{-}\phantom{-}& \sigma^{n-2}(\alpha_1)\phantom{-}\phantom{-} & \sigma^{n-1}(\alpha_1) \\
      \alpha_1\sigma(\alpha_1)\phantom{-}\phantom{-} & \sigma(\alpha_1)\sigma^{2}(\alpha_1)\phantom{-}\phantom{-} & \ldots\phantom{-}\phantom{-}& \sigma^{n-2}(\alpha_1)\sigma^{n-1}(\alpha_1)\phantom{-}\phantom{-} & \sigma^{n-1}(\alpha_1)\alpha_1 \\
        \ldots\phantom{-}\phantom{-} &  \ldots\phantom{-}\phantom{-} & \ldots\phantom{-}\phantom{-} & \ldots\phantom{-}\phantom{-} & \ldots \\
       \prod\limits_{k = 1}^{j-1}\sigma^{k-1}(\alpha_1) & \prod\limits_{k = 2}^{(j-1) + 1}\sigma^{k-1}(\alpha_1) & \ldots\phantom{-}\phantom{-} & \prod\limits_{k = n-1}^{(j-1) + n-2}\sigma^{k-1}(\alpha_1) & \prod\limits_{k = n}^{(j-1) + n - 1}\sigma^{k-1}(\alpha_1) \\ 
        \ldots\phantom{-}\phantom{-} &  \ldots\phantom{-}\phantom{-} & \ldots\phantom{-}\phantom{-} & \ldots\phantom{-}\phantom{-} & \ldots \\
    \end{smallmatrix}\right). \]
То есть, 
 \[  \big(\vec l_{1},\vec l_{2}, \ldots, \vec l_{n-1}, \vec l_{n}\big) = (a_{ji}), \] 
 где $a_{ji} = \prod\limits_{k = i}^{(j-1) + i - 1}\sigma^{k-1}(\alpha_1)$ при $j = 2, \ldots, n$ и $a_{1i} = 1$ для любого $i=1, \ldots, n$. Стало быть,
\[   H\big(\vec l_{1},\vec l_{2}, \ldots, \vec l_{n-1}, \vec l_{n}\big) = \big(\alpha_1\vec{l}_2, \sigma(\alpha_1)\vec{l}_3, \ldots, \sigma^{n-2}(\alpha_1)\vec{l}_{n},  \sigma^{n-1}(\alpha_1)\vec{l}_1\big). \]
Следовательно, $H$ --- собственная палиндромическая симметрия $\cf(l_1, \ldots, l_n)$ и $\sigma_{H} = (1, 2, \ldots, n-1, n)$. 

Обратно, предположим, $H$ ---  собственная палиндромическая симметрия цепной дроби $\cf(l_1, \ldots, l_n)$ и $\sigma_{H} = (1, 2, \ldots, n-1, n)$. Тогда, поскольку выполняется соотношение \ref{indexes}, имеем
\[
    H\vec{l}_1=
    \begin{pmatrix}
      \alpha_1  \\
      \alpha_2 \\
      \ldots \\
      \alpha_{n-1} \\
      1
    \end{pmatrix}
    =\mu_2
    \begin{pmatrix}
      1  \\
      \sigma_{2}(\alpha_1) \\
      \ldots \\
      \sigma_{2}(\alpha_{n-2}) \\
     \sigma_{2}(\alpha_{n-1})
    \end{pmatrix}, 
  \]
  откуда $\mu_2 = \alpha_1$, $\alpha_2 = \alpha_1\sigma_{2}(\alpha_1)$, $\alpha_3 = \alpha_1\sigma_{2}(\alpha_2) = \alpha_1\sigma_{2}(\alpha_1)\sigma^{2}_{2}(\alpha_1)$, $\ldots$, $\alpha_{n-1} = \alpha_1\sigma_{2}(\alpha_1)\cdots\sigma^{n-2}_{2}(\alpha_1)$, $1 = \alpha_1\sigma_{2}(\alpha_1)\cdots\sigma^{n-1}_{2}(\alpha_1)$. Для любого $i=2, \ldots, n-1$, в силу соотношения \ref{indexes}, имеем
   \[
    H\vec{l}_i=
    \begin{pmatrix}
      \sigma_{i}(\alpha_1)  \\
      \sigma_{i}(\alpha_2) \\
      \ldots \\
       \sigma_{i}(\alpha_{n-1})  \\
      1
    \end{pmatrix}
    =\mu_{i+1}
    \begin{pmatrix}
      1  \\
      \sigma_{i+1}(\alpha_1) \\
      \ldots \\
      \sigma_{i+1}(\alpha_{n-2}) \\
     \sigma_{i+1}(\alpha_{n-1})
    \end{pmatrix}, 
  \]
   откуда $\mu_{i+1} = \sigma_{i}(\alpha_1)$ и 
  \[\sigma_{i+1}(\alpha_1) = \frac{\sigma_{i}(\alpha_2)}{\sigma_{i}(\alpha_1)}=\sigma_{i}(\sigma_{2}(\alpha_1)),\] 
  \[\sigma_{i+1}(\alpha_2) = \frac{\sigma_{i}(\alpha_3)}{\sigma_{i}(\alpha_1)}=\sigma_{i}(\sigma_{2}(\alpha_2)),\] 
  \[\ldots,\]
   \[\sigma_{i+1}(\alpha_{n-1}) = \frac{1}{\sigma_{i}(\alpha_1)}=\sigma_{i}(\sigma_{2}(\alpha_{n-1})).\] 
  Применяя индукцию, получаем, что $\sigma_{i+1}(\alpha_1) =  \sigma_{2}^{i}(\alpha_1)$, $\sigma_{i+1}(\alpha_2) =  \sigma_{2}^{i}(\alpha_2)$, $\ldots,$ $\sigma_{i+1}(\alpha_{n-1}) =  \sigma_{2}^{i}(\alpha_{n-1})$. Тогда $\cf(l_1, \ldots, l_n) \in \mathbf{CF}$, так как числа $1,\alpha_1, \dots, \alpha_{n-1}$ образуют базис поля $K$.
\end{proof}

\begin{proof}[Доказательство теоремы \ref{main_t_n}]
Для начала докажем существование конечного вполне вещественного циклического расширения Галуа степени $n$ поля $\Q$. В силу теоремы Дирихле об арифметической прогрессии существует такое  простое $p$, что $p\equiv 1{\pmod {2n}}$. Пусть $\zeta_{p}$ --- корень степени $p$ из единицы, и $E = \Q(\zeta_{p})$. Заметим, что $\gal(E/\Q) = \Z/(p-1)\Z$. Рассмотрим поле $K_0 = \Q(\zeta_{p} + \zeta^{-1}_{p})$. Тогда $K_0$ --- конечное вполне вещественное расширение поля $\Q$ степени $\frac{p-1}{2}$ (см. \cite{lehmer}) и $[E:K_0] = 2$, поскольку $x^2 - (\zeta_{p} + \zeta^{-1}_{p})x +1$ --- минимальный многочлен для $\zeta_{p}$ над $K_0$. Поскольку группа $\gal(E/\Q)$ является циклической, то все ее подгруппы нормальны, более того, все факторгруппы $\gal(E/\Q)$ по подгруппам $\gal(E/\Q)$ циклические, а значит  $K_0$ --- циклическое расширение Галуа поля $\Q$ в силу основной теоремы теории Галуа. Поскольку $n$ делит $\frac{p-1}{2}$, то циклическая группа $\gal(K_0/\Q)$ содержит подгруппу $F$ индекса $n$. Пусть $K = K_{0}^{F}$. Вновь применяя основную теорему теории Галуа получаем, что $K$ --- циклическое расширение Галуа поля $\Q$,  $[K:\Q] = [\gal(K_{0}/\Q):F] = n$. При этом поле $K \subset K_{0}$ вполне вещественное расширение поля $\Q$.

Пусть $K$ --- конечное вполне вещественное циклическое расширение Галуа степени $n$ поля $\Q$, а $\sigma$ --- порождающий элемент группы Галуа этого расширения. По теореме о нормальном базисе существует набор чисел 
\[\omega, \sigma(\omega), \dots, \sigma^{n-1}(\omega),\]
 являющийся базисом расширения $K$. Тогда набор чисел $1, \frac{\sigma(\omega)}{\omega}, \dots, \frac{\sigma^{n-1}(\omega)}{\omega}$ также образуют базис расширения $K$. Стало быть, в силу предложения \ref{prop:more_than_pelle_n_dim} вектор $(1, \frac{\sigma(\omega)}{\omega}, \dots, \frac{\sigma^{n-1}(\omega)}{\omega})$ является собственным для некоторого гиперболического оператора $A\in\Sl_n(\Z)$. Заметим, что для любого $j=2, \ldots, n-1$
\[\frac{\sigma(\omega)}{\omega}\sigma\Big(\frac{\sigma(\omega)}{\omega}\Big)\sigma^{2}\Big(\frac{\sigma(\omega)}{\omega}\Big)\cdots\sigma^{j-1}\Big(\frac{\sigma(\omega)}{\omega}\Big) = \frac{\sigma^{j}(\omega)}{\omega},\]
при этом $\norm(\frac{\sigma(\omega)}{\omega})=1$. Таким образом $\cf(A) \in \mathbf{CF}$. Для завершения доказательства достаточно применить лемму \ref{oper_eq_n}.
\end{proof}

\section{Палиндромические симметрии в случае $n=4$}\label{sec_4}

Отныне будем считать, что $n=4$, то есть будем рассматривать трехмерные цепные дроби. Напомним, что множество всех трехмерных алгебраических цепных дробей мы обозначаем через $\gA_3$. Далее, как и в параграфе \ref{sec_3}, будем считать, что если задана дробь $\cf(l_1,l_2,l_3, l_4)\in\gA_3$, то подпространство $l_i$ порождается вектором  $\vec l_i$, первая координата которого равна $1$, где $i=1,2,3,4$.

Пусть $G$ --- циклическая симметрия $\cf(l_1,l_2,l_3, l_4)\in\gA_3$.  Изменив при необходимости нумерацию подпространств $l_1, l_2, l_3, l_4$  можно рассмотреть такие вещественные числа $\mu_{1}$, $\mu_{2}$, $\mu_{3}$, $\mu_{4}$, что матрица оператора $G$ в базисе $\vec{l}_{1}, \vec{l}_{2}, \vec{l}_{3}, \vec{l}_{4}$ имеет вид
  \begin{equation}\label{matrix_ord_4_0}
   \begin{pmatrix}
     0 & 0 & 0 &  \mu_{1}\\
     \mu_{2} & 0 & 0 & 0\\
      0 & \mu_{3} & 0 & 0\\
       0 & 0 &  \mu_{4} & 0
   \end{pmatrix}.
 \end{equation}

 Лемма \ref{about_properity} в случае  $n=4$ при изменении нумерации подпространств $l_1, l_2, l_3, l_4$ приобретает следующий вид 
 \begin{corollary}\label{about_properity_n_4}
  Пусть $G$ --- циклическая симметрия $\cf(l_1,l_2,l_3, l_4)\in\gA_3$ и матрица оператора $G$ в базисе $\vec{l}_{1}, \vec{l}_{2}, \vec{l}_{3}, \vec{l}_{4}$ имеет вид \ref{matrix_ord_4_0}. Тогда $G$ является собственной циклической симметрией дроби $\cf(l_1,l_2, l_3, l_4)$ в том и только том случае, если $\mu_{1}\mu_{2}\mu_{3}\mu_{4} = 1$.
\end{corollary}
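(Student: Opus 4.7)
Моя стратегия --- свести следствие \ref{about_properity_n_4} напрямую к лемме \ref{about_properity} при $n=4$, совершив подходящую перенумерацию подпространств $l_1, l_2, l_3, l_4$. Принципиальное наблюдение состоит в том, что матричные формы \ref{matrix_ord_n_0} и \ref{matrix_ord_4_0} описывают одно и то же циклическое действие оператора $G$, различающееся лишь направлением $4$-цикла: форма \ref{matrix_ord_n_0} отвечает перестановке $\sigma_G=(1\,4\,3\,2)$ (из положения $\mu_i$ читается $G\vec{l}_{i+1}=\mu_i\vec{l}_i$ по модулю $4$), а форма \ref{matrix_ord_4_0} --- обратному циклу $\sigma_G=(1\,2\,3\,4)$, поскольку из неё читается $G\vec{l}_1=\mu_2\vec{l}_2$, $G\vec{l}_2=\mu_3\vec{l}_3$, $G\vec{l}_3=\mu_4\vec{l}_4$, $G\vec{l}_4=\mu_1\vec{l}_1$.

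Первым шагом я обращу направление цикла, положив $(\vec{l}_1', \vec{l}_2', \vec{l}_3', \vec{l}_4'):=(\vec{l}_1, \vec{l}_4, \vec{l}_3, \vec{l}_2)$. Прямой пересчёт действия $G$ показывает, что в новом базисе матрица оператора приобретает в точности вид \ref{matrix_ord_n_0} с $n=4$, а новые коэффициенты получаются перестановкой старых: $(\mu_1', \mu_2', \mu_3', \mu_4')=(\mu_1, \mu_4, \mu_3, \mu_2)$. Ключевое обстоятельство --- то, что их произведение не меняется: $\mu_1'\mu_2'\mu_3'\mu_4'=\mu_1\mu_2\mu_3\mu_4$.

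Вторым шагом я применю лемму \ref{about_properity} к цепной дроби $\cf(l_1', l_2', l_3', l_4')$, которая как подмножество $\R^4$ совпадает с исходной $\cf(l_1, l_2, l_3, l_4)$. Лемма даёт: $G$ --- собственная циклическая симметрия этой дроби тогда и только тогда, когда $\mu_1'\mu_2'\mu_3'\mu_4'=1$, то есть тогда и только тогда, когда $\mu_1\mu_2\mu_3\mu_4=1$. Здесь используется то, что собственность симметрии определяется наличием у $G$ неподвижной точки на некотором парусе и не зависит от способа нумерации подпространств $l_i$. Существенных препятствий в рассуждении нет; аккуратности требует лишь сам пересчёт матрицы $G$ при перестановке базиса и проверка того, что произведение $\mu_1\mu_2\mu_3\mu_4$ инвариантно относительно этой перестановки.
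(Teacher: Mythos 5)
Ваше рассуждение корректно и по существу совпадает с тем, как статья обосновывает следствие \ref{about_properity_n_4}: там оно вводится именно как лемма \ref{about_properity} при $n=4$ «при изменении нумерации подпространств», а вы лишь аккуратно выписываете эту перенумерацию $(\vec{l}_1,\vec{l}_4,\vec{l}_3,\vec{l}_2)$, проверяете, что матрица принимает вид \ref{matrix_ord_n_0} с коэффициентами $(\mu_1,\mu_4,\mu_3,\mu_2)$, и замечаете инвариантность произведения. Все проверки (соответствие циклов, независимость цепной дроби и понятия собственности от нумерации) верны.
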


\begin{lemma}\label{rational_subspace}
  Пусть $G$ --- собственная циклическая симметрия $\cf(l_1,l_2,l_3, l_4)\in\gA_3$. Тогда собственные значения оператора $G$ равны $1$, $-1$, $i$ и $-i$. Более того, собственные подпространства $l_{+}$ (соответствующее собственному значению $1$), $l_{-}$ (соответствующее собственному значению $-1$) и $L$ (соответствующее собственным значениям $i$ и $-i$) являются рациональными. В частности, подпространство $L$ не содержит собственных для $G$ одномерных подпространств и для любого $\vec v \in L$ верно, что $G^2({\vec v}) = -\vec v$.
\end{lemma}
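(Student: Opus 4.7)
Мой план --- вывести всё утверждение леммы из прямого вычисления характеристического многочлена оператора $G$ в базисе $\vec l_1,\ldots,\vec l_4$, в котором его матрица имеет антициркулянтный вид \ref{matrix_ord_4_0}. Сначала, пользуясь следствием \ref{about_properity_n_4}, я отмечу, что в силу собственности $G$ выполняется $\mu_1\mu_2\mu_3\mu_4=1$. Затем раскрою $\det(G-\lambda I)$ разложением по первой строке: отличными от нуля останутся только два члена, соответствующие позициям $(1,1)$ и $(1,4)$, оба минора окажутся треугольными, и после упрощения я получу
\[
  \chi_G(\lambda) = \lambda^4 - \mu_1\mu_2\mu_3\mu_4 = \lambda^4 - 1.
\]
Отсюда немедленно следует, что собственные значения $G$ над $\Comp$ суть $1,-1,i,-i$ (все простые), что даёт первую часть утверждения.

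Далее я воспользуюсь разложением $\lambda^4-1=(\lambda-1)(\lambda+1)(\lambda^2+1)$ над $\Q$ на попарно взаимно простые множители; оно задаёт разложение $\R^4=l_+\oplus l_-\oplus L$, где
\[
  l_+=\ker(G-I),\quad l_-=\ker(G+I),\quad L=\ker(G^2+I).
\]
Поскольку $G\in\Gl_4(\Z)$, все три подпространства будут ядрами линейных операторов с целочисленными матрицами, а значит, рациональны; их вещественные размерности суть $1$, $1$, $2$.

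Наконец, так как собственные значения ограничения $G|_L$ равны $\pm i$ и не вещественны, подпространство $L$ не содержит одномерных $G$-инвариантных вещественных подпространств (то есть вещественных собственных прямых для $G$), а равенство $G^2(\vec v)=-\vec v$ для $\vec v\in L$ представляет собой в точности определение $L=\ker(G^2+I)$. Серьёзной трудности тут не возникает: всё сводится к стандартным выкладкам линейной алгебры, и единственный чуть кропотливый шаг --- вычисление определителя антициркулянтной $4\times 4$-матрицы --- укладывается в несколько строк обычного разложения по первой строке.
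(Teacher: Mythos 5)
Ваше рассуждение корректно и в первой части дословно совпадает с авторским: и вы, и автор приводите матрицу $G$ к антициркулянтному виду \ref{matrix_ord_4_0}, получаете по следствию \ref{about_properity_n_4} равенство $\mu_1\mu_2\mu_3\mu_4=1$ и вычисляете $\chi_G(\lambda)=\lambda^4-\mu_1\mu_2\mu_3\mu_4=\lambda^4-1$, откуда собственные значения суть $1,-1,i,-i$. Различие --- в доказательстве рациональности $L$. Вы используете примарное разложение по взаимно простым множителям $(\lambda-1)(\lambda+1)(\lambda^2+1)$ и ссылаетесь на стандартный факт, что ядро оператора с целочисленной (или рациональной) матрицей --- рациональное подпространство; в частности, $L=\ker(G^2+I)$ рационально автоматически, а последние два утверждения леммы становятся тавтологиями. Автор же действует более конструктивно: для целой точки $\vec z\notin l_++l_-$ он явно предъявляет два ненулевых целочисленных вектора $\vec z-G^2(\vec z)$ и $G(\vec z)-G^3(\vec z)$, лежащих в $L$, и показывает их неколлинеарность, пользуясь тем, что $L$ не содержит вещественных собственных прямых $G$. Ваш путь короче и опирается на общую линейную алгебру; авторский путь даёт явные целые точки в $L$ и те самые проекции $\vec p(\vec v,\cdot)$, которые затем непосредственно используются в доказательстве леммы \ref{main_lem}, так что его вычислительная избыточность не случайна. Единственная мелочь, которую стоило бы проговорить явно: из простоты корней $\chi_G$ следует, что $\dim l_\pm=1$ и $\dim L=2$, то есть ядра в вашем разложении действительно имеют нужные размерности, --- но это одна строка.
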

 
\begin{proof}
   Изменив при необходимости нумерацию подпространств $l_1, l_2, l_3, l_4$, можно считать, что в силу следствия \ref{about_properity_n_4} существуют такие вещественные числа $\mu_{1}$, $\mu_{2}$, $\mu_{3}$, $\mu_{4}$, что $\mu_{1}\mu_{2}\mu_{3}\mu_{4} = 1$ и матрица оператора $G$ в базисе $\vec{l}_{1}, \vec{l}_{2}, \vec{l}_{3}, \vec{l}_{4}$ имеет вид \ref{matrix_ord_4_0}. Так как $\chi_{G}(x) = x^4 - \mu_{1}\mu_{2}\mu_{3}\mu_{4}$, то собственные значения оператора $G$ равны $1$, $-1$, $i$ и $-i$, а значит у $G$ есть ровно два одномерных собственных подпространства и двумерное инвариантное подпространство, которое не содержит собственных для $G$ одномерных подпространств. Обозначим через $l_{+}$ рациональное одномерное собственное подпространство оператора $G$, соответствующее собственному значению $1$, через $l_{-}$ --- рациональное одномерное собственное подпространство оператора $G$, соответствующее собственному значению $-1$, а через $L$ --- двумерное инвариантное подпространство, соответствующее собственным значениям $i$ и $-i$. Покажем рациональность подпространства $L$.

Поскольку $l_{-} + l_{+} + L = \R^{4}$, то для любого вектора $\vec v \in \R^{4}$ существуют такие единственные вектора $\vec{p}(\vec v, l_{-}) \in  l_{-}$, $\vec{p}(\vec v,  l_{+})  \in  l_{+}$ и $\vec{p}(\vec v, L) \in L$, что выполняется равенство
\[\vec v = \vec{p}(\vec v, l_{-}) + \vec{p}(\vec v,  l_{+}) + \vec{p}(\vec v, L).\]
Заметим, что $\vec{p}\big(G^{2}(\vec{v}),  L\big) = \vec{p}(-\vec v, L)$, $\vec{p}\big(G^{2}(\vec{v}), l_{-}\big) = \vec{p}(\vec v, l_{-})$ и $\vec{p}\big(G^{2}(\vec{v}), l_{+}\big) = \vec{p}(\vec v, l_{+})$ для любого вектора $\vec v \in \R^{4}$. Таким образом, для любой точки $\vec{z} \in \Z^{4} \setminus (l_{+} \, + \, l_{-})$ ненулевые целочисленные вектора $\vec{z}-G^{2}(\vec{z})$ и $G(\vec{z})-G^{3}(\vec{z})$ лежат в двумерном подпространстве $L$. Эти два целочисленных вектора неколлинеарны, поскольку $G(\vec{z})-G^{3}(\vec{z}) =  G\big(\vec{z}-G^{2}(\vec{z})\big)$ и подпространство $L$ не содержит собственных для оператора $G$ одномерных подпространств.  Итак, мы показали, что подпространство $L$ рационально. 
\end{proof}

\begin{lemma}\label{main_lem}
  Пусть $G$ --- собственная циклическая симметрия $\cf(l_1,l_2,l_3, l_4)\in\gA_3$. Тогда существуют $\vec{z}_1$, $\vec{z}_2$, $\vec{z}_3$, $\vec{z}_4$ $\in$ $\Z^4$, такие что
\[G(\vec{z}_{1}) = \vec{z}_{2}, \, G(\vec{z}_{2}) = \vec{z}_{3}, \, G(\vec{z}_{3}) = \vec{z}_{4}, \, G(\vec{z}_{4}) = \vec{z}_{1}\] 
и выполняется хотя бы одно из следующих семи утверждений:

\textup{(1)} вектора $\vec{z}_{1}$, $\vec{z}_{2}$, $\vec{z}_{3}$, $\frac{1}{4}(\vec{z}_{1}+\vec{z}_{2}+\vec{z}_{3}+\vec{z}_{4})$, образуют базис решетки $\Z^4$;

\textup{(2)} вектора $\vec{z}_{1}$, $\vec{z}_{2}$,  $\vec{z}_{3}$, $\vec{z}_{4}$ образуют базис решетки $\Z^4$;

\textup{(3)} вектора $\vec{z}_{1}$, $\frac{1}{2}(\vec{z}_{1}+\vec{z}_{2})$, $\frac{1}{2}(\vec{z}_{1}+\vec{z}_{3})$, $\frac{1}{2}(\vec{z}_{1}+\vec{z}_{4})$ образуют базис решетки $\Z^4$;

\textup{(4)} вектора $\vec{z}_{1}$, $\vec{z}_{2}$, $\frac{1}{2}(\vec{z}_{1}+\vec{z}_{3})$, $\frac{1}{4}(\vec{z}_{1}+\vec{z}_{2}+\vec{z}_{3}+\vec{z}_{4})$ образуют базис решетки $\Z^4$;

\textup{(5)} вектора $\vec{z}_{1}$, $\vec{z}_{2}$,  $\frac{1}{2}(\vec{z}_{1}+\vec{z}_{3})$, $\frac{1}{2}(\vec{z}_{2}+\vec{z}_{4})$ образуют базис решетки $\Z^4$;

\textup{(6)} вектора $\vec{z}_{1}$, $\vec{z}_{2}$,  $\vec{z}_{3}$, $\frac{1}{2}(\vec{z}_{1} + \vec{z}_{3} + \vec{z}_{4} - \vec{z}_{2})$ образуют базис решетки $\Z^4$;

\textup{(7)} вектора $\vec{z}_{1}$, $\vec{z}_{2}$,  $\vec{z}_{3}$, $\frac{1}{2}(\vec{z}_{1}+\vec{z}_{2}) + \frac{1}{4}(\vec{z}_{1}+\vec{z}_{4} - \vec{z}_{3} - \vec{z}_{2})$ образуют базис решетки $\Z^4$.
\end{lemma}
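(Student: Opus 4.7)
The plan is to choose a generic integer vector $\vec{z}_1\in\Z^4$, form its $G$-orbit, and analyze how the resulting orbit lattice sits inside $\Z^4$ via the rational eigen-decomposition of Lemma~\ref{rational_subspace}.

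I would first pick $\vec{z}_1\in\Z^4\setminus(l_+\cup l_-\cup L)$; such a point exists since $\Z^4$ is not covered by finitely many proper subspaces. Setting $\vec{z}_i=G^{i-1}\vec{z}_1$, the four vectors are linearly independent, because any nontrivial relation $p(G)\vec{z}_1=0$ with $\deg p<4$ would force $\vec{z}_1$ into a proper $G$-invariant rational subspace, contradicting the genericity.

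Next, I introduce the eigen-projectors $P_+=\tfrac14(1+G+G^2+G^3)$, $P_-=\tfrac14(1-G+G^2-G^3)$, $P_L=\tfrac12(1-G^2)$ and set $\vec{a}=P_+\vec{z}_1$, $\vec{b}=P_-\vec{z}_1$, $\vec{c}=P_L\vec{z}_1$, $\vec{d}=G\vec{c}$. A direct computation gives
\[
  \vec{z}_1=\vec{a}+\vec{b}+\vec{c},\ \vec{z}_2=\vec{a}-\vec{b}+\vec{d},\ \vec{z}_3=\vec{a}+\vec{b}-\vec{c},\ \vec{z}_4=\vec{a}-\vec{b}-\vec{d},
\]
so $(\vec{a},\vec{b},\vec{c},\vec{d})$ is a $\Q$-basis of $\R^4$, and the rational lattice $\Lambda'=\Z\langle\vec{a},\vec{b},\vec{c},\vec{d}\rangle$ contains the orbit lattice $\Lambda=\Z\langle\vec{z}_1,\ldots,\vec{z}_4\rangle$ with index $8$, the Smith form yielding $\Lambda'/\Lambda\cong\Z/4\oplus\Z/2$ as an abelian group.

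The main step is then a classification of the position of $\Z^4$ between $\Lambda$ and $\Lambda'\otimes\Q$: since $\Lambda\subseteq\Z^4$, the quotient $\Z^4/\Lambda$ is a finite abelian group carrying the induced $G$-action, and the intermediate sublattice $M=\Z^4\cap(\Lambda'\otimes\Q)$ is determined by finitely many possibilities. Using the freedom to replace $\vec{z}_1$ by $\vec{z}_1+\vec{w}$ with $\vec{w}\in\Z^4$, so as to shift the orbit by a $G$-stable correction, one reduces to a canonical representative of $\vec{z}_1$ modulo a suitable $G$-invariant sublattice; enumerating the compatible $G$-equivariant configurations yields the seven listed bases. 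Each case in (1)--(7) corresponds to one specific pattern of half- and quarter-integer combinations of $\vec{z}_i$ that happen to lie in $\Z^4$ and complete the orbit to a $\Z$-basis.

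The main obstacle is this final classification: one must enumerate all $G$-invariant intermediate lattices $M$ with $\Lambda\subseteq M\subseteq\Lambda'$, verify that each admissible $M$ corresponds to one of the seven basis shapes, and check exhaustiveness. This reduces to a finite verification in the $G$-module $\Lambda'/\Lambda$, but carefully organizing the case split --- in particular handling the interplay between the $l_+$-component, the $l_-$-component, and the two-dimensional $L$-component under the cyclic $G$-action of order four --- is the principal technical hurdle.
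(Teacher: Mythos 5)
Your eigenprojector decomposition $\vec{z}_1=\vec{a}+\vec{b}+\vec{c}$ with $\vec{a}=P_+\vec{z}_1$, $\vec{b}=P_-\vec{z}_1$, $\vec{c}=P_L\vec{z}_1$, $\vec{d}=G\vec{c}$ is correct and is essentially an algebraic restatement of the paper's geometric setup (the point $\vec{p}$ on $l_+$, the line $l$ and the plane $\pi$, and the orbit parallelograms); the index computation $[\Lambda':\Lambda]=8$ with $\Lambda'/\Lambda\cong\Z/4\oplus\Z/2$ also checks out. But the proof has a genuine gap exactly where you locate the ``principal technical hurdle'': for a \emph{generic} $\vec{z}_1\in\Z^4$ the index $[\Z^4:\Lambda]$ is unbounded (replace $\vec{z}_1$ by $N\vec{z}_1$ and the index is multiplied by $N^4$), so the whole content of Lemma~\ref{main_lem} is that a \emph{minimal} orbit can be chosen, after which only seven configurations survive. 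The vague appeal to ``replacing $\vec{z}_1$ by $\vec{z}_1+\vec{w}$ and reducing to a canonical representative'' does not supply this: one needs a descent argument showing that if the index is too large, or if the quotient $\Z^4/\Lambda$ has the wrong $G$-module structure, then a strictly smaller orbit exists. This is precisely what the paper's iterative procedure does, by working inside the $G$-invariant rational hyperplane $S_1$ nearest to $l_-+L$ and repeatedly passing to an integer point interior to one of the orbit parallelograms $\Delta^Q_j$, $\Delta^R_j$ until they contain no extraneous integer points; the terminal ``empty'' configuration is then classified by a finite case analysis on which of the points $\vec{p}$, $\vec{p}^Q$, $\vec{p}^R$, $\vec{z}^\pi_i$ and the various midpoints are integral.

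A second, more technical flaw: the classification you propose runs over intermediate lattices $\Lambda\subseteq M\subseteq\Lambda'$, but $\Z^4$ need not be contained in $\Lambda'$. In case (3), for instance, $\tfrac12(\vec{z}_1+\vec{z}_2)=\vec{a}+\tfrac12(\vec{c}+\vec{d})\notin\Z\langle\vec{a},\vec{b},\vec{c},\vec{d}\rangle$, and the set $M=\Z^4\cap(\Lambda'\otimes\Q)$ you write down is just $\Z^4$ since $\Lambda'$ has full rank. So even granting minimality, the enumeration would have to be carried out over $G$-invariant overlattices of $\Lambda$ in $\tfrac14\Lambda$ (or, equivalently, $G$-submodules of $\tfrac14\Lambda/\Lambda$) satisfying the emptiness constraints, not over sublattices of $\Lambda'$. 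Until the descent step and the corrected, exhaustive enumeration are actually carried out, the argument does not establish the lemma.
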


\begin{proof}
Будем называть плоскость \emph{рациональной}, если множество содержащихся в нем целых точек является (аффинной) решеткой ранга, равного размерности этой плоскости.

Рассмотрим подпространства $l_{+}$, $l_{-}$ и $L$ из леммы \ref{rational_subspace} и положим $S = l_{-} + L$. Обозначим через $S_1$ ближайшую к $S$ рациональную гиперплоскость, параллельную $S$ и не совпадающую с $S$ (любую из двух). Тогда $G(S_1) = S_1$. Также обозначим через $\vec{p}$ точку пересечения гиперплоскости  $S_1$ и $l_{+}$, а через $l$ и $\pi$ прямую и плоскость, проходящие через точку $\vec{p}$ и параллельные $l_{-}$ и $L$ соответственно. Тогда, в силу леммы \ref{rational_subspace},  $G(\vec{p}) = \vec{p}$, $G(\vec{v} - \vec{p}) = \vec{p} - \vec{v}$  для любого вектора $\vec v \in l$ и  $G^{2}(\vec{v} - \vec{p}) = \vec{p} - \vec{v}$  для любого вектора $\vec v \in \pi$. 

Поскольку подпространство $L$ не содержит собственных для $G$ одномерных подпространств, то для произвольной точки $\vec{v} \in \pi \setminus l$ четырехугольник 
\[\textup{conv}\big(\vec{v}, G(\vec{v}), G^{2}(\vec{v}), G^{3}(\vec{v})\big)\]
 является параллелограммом, диагонали которого пересекаются в точке $\vec{p} = \frac{1}{2}\big(\vec{v} + G^{2}(\vec{v})\big) = \frac{1}{2}\big(G(\vec{v}) + G^{3}(\vec{v})\big)$.

Обозначим через $Q$ рациональную плоскость ближайшую к $\pi$, лежащую в гиперплоскости $S_1$, параллельную $\pi$ и не совпадающую с $\pi$. Поскольку $G(\vec{v} - \vec{p}) = \vec{p} - \vec{v}$  для любого вектора $\vec v \in l$, то $R = G(Q)$ и $Q$ --- рациональные плоскости ближайшие к $\pi$ и равноудаленные от нее, лежащие в гиперплоскости $S_1$ по разные стороны от $\pi$, параллельные $\pi$ и не совпадающие с $\pi$. Положим $\vec{p}^{Q} = Q \cap l$ и $\vec{p}^{R} = R \cap l$. Построим точки $\vec{z}_{1}$, $\vec{z}_{2}$, $\vec{z}_{3}$, $\vec{z}_{4}$ при помощи следующей итерационной процедуры. Возьмем произвольную целочисленную точку $\vec{v}_{1,1} \in Q \setminus l$. Введем обозначения $\vec{v}_{1,2} = G(\vec{v}_{1,1})$, $\vec{v}_{1,3} = G^{2}(\vec{v}_{1,1})$, $\vec{v}_{1,4} = G^{3}(\vec{v}_{1,1})$. Пусть точки  $\vec{v}^{\pi}_{1,1}$, $\vec{v}^{\pi}_{1,2}$, $\vec{v}^{\pi}_{1,3}$, $\vec{v}^{\pi}_{1,4}$ --- проекции параллельные $l$ на плоскость $\pi$ точек $\vec{v}_{1,1}$, $\vec{v}_{1,2}$, $\vec{v}_{1,3}$, $\vec{v}_{1,4}$ соответственно. Также обозначим через $\vec{v}^{R}_{1,1}$ и $\vec{v}^{R}_{1,3}$ --- проекции параллельные $l$ на плоскость $R$ точек $\vec{v}_{1,1}$ и $\vec{v}_{1,3}$, а через $\vec{v}^{Q}_{1,2}$ и $\vec{v}^{Q}_{1,4}$ --- проекции параллельные $l$ на плоскость $Q$ точек $\vec{v}_{1,2}$ и $\vec{v}_{1,4}$. По доказанному выше множество $\Delta^{\pi}_{1} = \textup{conv}(\vec{v}^{\pi}_{1,1}, \vec{v}^{\pi}_{1,2}, \vec{v}^{\pi}_{1,3}, \vec{v}^{\pi}_{1,4})$ является параллелограммом, диагонали которого пересекаются в точке $\vec{p} = \frac{1}{4}(\vec{v}_{1,1} + \vec{v}_{1,2} + \vec{v}_{1,3} + \vec{v}_{1,4})$. Таким образом множества $\Delta^{Q}_{1} = \textup{conv}(\vec{v}_{1,1}, \vec{v}^{Q}_{1,2}, \vec{v}_{1,3}, \vec{v}^{Q}_{1,4})$ и $\Delta^{R}_{1} = \textup{conv}(\vec{v}^{R}_{1,1}, \vec{v}_{1,2}, \vec{v}^{R}_{1,3}, \vec{v}_{1,4})$ также являются параллелограммами. Заметим, что $\vec{p}^{Q} = \frac{1}{2}(\vec{v}_{1,1} + \vec{v}_{1,3})$ и $\vec{p}^{R} = \frac{1}{2}(\vec{v}_{1,2} + \vec{v}_{1,4})$. 

Предположим, мы построили параллелограммы $\Delta^{\pi}_{j}$, $\Delta^{Q}_{j}$ и $\Delta^{R}_{j}$. Если на плоскостях $Q$ и $R$ существует целая точка, не совпадающая с точками $\vec{p}^{Q}$, $\vec{p}^{R}$ и ни с какой из вершин параллелограммов $\Delta^{Q}_{j}$ и $\Delta^{R}_{j}$, при этом лежащая в одном из этих параллелограммов (без ограничения общности, внутри $\Delta^{Q}_{j}$), то обозначим её через $\vec{v}_{j+1,1}$. Введем обозначения $\vec{v}_{j+1,2} = G(\vec{v}_{j+1,1})$, $\vec{v}_{j+1,3} = G^{2}(\vec{v}_{j+1,1})$, $\vec{v}_{j+1,4} = G^{3}(\vec{v}_{j+1,1})$. Пусть точки  $\vec{v}^{\pi}_{j+1,1}$, $\vec{v}^{\pi}_{j+1,2}$, $\vec{v}^{\pi}_{j+1,3}$, $\vec{v}^{\pi}_{j+1,4}$ --- проекции параллельные $l$ на плоскость $\pi$ точек $\vec{v}_{j+1,1}$, $\vec{v}_{j+1,2}$, $\vec{v}_{j+1,3}$, $\vec{v}_{j+1,4}$ соответственно. Также обозначим через $\vec{v}^{R}_{j+1,1}$ и $\vec{v}^{R}_{j+1,3}$ --- проекции параллельные $l$  на плоскость $R$ точек $\vec{v}_{j+1,1}$ и $\vec{v}_{j+1,3}$, а через $\vec{v}^{Q}_{j+1,2}$ и $\vec{v}^{Q}_{j+1,4}$ --- проекции параллельные $l$ на плоскость $Q$ точек $\vec{v}_{j+1,2}$ и $\vec{v}_{j+1,4}$. Определим параллелограммы 
\[\Delta^{\pi}_{j+1} = \textup{conv}(\vec{v}^{\pi}_{j+1,1}, \vec{v}^{\pi}_{j+1,2}, \vec{v}^{\pi}_{j+1,3}, \vec{v}^{\pi}_{j+1,4}),\]
\[\Delta^{Q}_{j+1} = \textup{conv}(\vec{v}_{j+1,1}, \vec{v}^{Q}_{j+1,2}, \vec{v}_{j+1,3}, \vec{v}^{Q}_{j+1,4}),\] 
\[\Delta^{R}_{j+1} = \textup{conv}(\vec{v}^{R}_{j+1,1}, \vec{v}_{j+1,2}, \vec{v}^{R}_{j+1,3}, \vec{v}_{j+1,4}).\] 
При этом $\vec{p} = \frac{1}{4}(\vec{v}_{j+1,1} + \vec{v}_{j+1,2} + \vec{v}_{j+1,3} + \vec{v}_{j+1,4})$, $\vec{p}^{Q} = \frac{1}{2}(\vec{v}_{j+1,1} + \vec{v}_{j+1,3})$ и $\vec{p}^{R} = \frac{1}{2}(\vec{v}_{j+1,2} + \vec{v}_{j+1,4})$. 

Последовательность троек $(\Delta^{\pi}_{j}, \Delta^{Q}_{j}, \Delta^{R}_{j})$ конечна. Пусть $(\Delta^{\pi}_{k}, \Delta^{Q}_{k}, \Delta^{R}_{k})$ --- последний её элемент. Положим $\vec{z}_{1} = \vec{v}_{k,1}$, $\vec{z}_{2} = \vec{v}_{k,2}$, $\vec{z}_{3} = \vec{v}_{k,3}$, $\vec{z}_{4} = \vec{v}_{k,4}$, $\vec{z}^{\pi}_{1} = \vec{v}^{\pi}_{k,1}$, $\vec{z}^{\pi}_{2} = \vec{v}^{\pi}_{k,2}$, $\vec{z}^{\pi}_{3} = \vec{v}^{\pi}_{k,3}$, $\vec{z}^{\pi}_{4} = \vec{v}^{\pi}_{k,4}$, $\vec{z}^{R}_{1} = \vec{v}^{R}_{k,1}$, $\vec{z}^{R}_{3} = \vec{v}^{R}_{k,3}$, $\vec{z}^{Q}_{2} = \vec{v}^{Q}_{k,2}$, $\vec{z}^{Q}_{4} = \vec{v}^{Q}_{k,4}$. Покажем, что множество $(\Delta^{\pi}_{k} \cup \Delta^{Q}_{k} \cup \Delta^{R}_{k}) \cap \Z^{4}$ совпадает с одним из множеств 
\[\{\vec{p}, \, \vec{z}_{1}, \, \vec{z}^{Q}_{2}, \, \vec{z}_{3}, \, \vec{z}^{Q}_{4}, \, \vec{z}^{R}_{1}, \, \vec{z}_{2}, \, \vec{z}^{R}_{3}, \, \vec{z}_{4} \},\]  
\[\{\vec{z}_{1}, \, \vec{z}_{3}, \, \vec{z}_{2}, \, \vec{z}_{4}\},\]
\[\{\vec{z}_{1}, \, \vec{z}^{Q}_{2}, \, \vec{z}_{3}, \, \vec{z}^{Q}_{4}, \, \vec{z}^{R}_{1}, \, \vec{z}_{2}, \, \vec{z}^{R}_{3}, \, \vec{z}_{4}, \, \vec{p}^{Q}, \, \vec{p}^{R}, \, \frac{\vec{z}^{\pi}_{1} + \vec{z}^{\pi}_{2}}{2}, \, \frac{\vec{z}^{\pi}_{2} + \vec{z}^{\pi}_{3}}{2}, \, \frac{\vec{z}^{\pi}_{3} + \vec{z}^{\pi}_{4}}{2}, \, \frac{\vec{z}^{\pi}_{4} + \vec{z}^{\pi}_{1}}{2}\},\]
\[\{\vec{p}, \, \vec{z}_{1}, \, \vec{z}^{Q}_{2}, \, \vec{z}_{3}, \, \vec{z}^{Q}_{4}, \, \vec{z}^{R}_{1}, \, \vec{z}_{2}, \, \vec{z}^{R}_{3}, \, \vec{z}_{4}, \, \vec{p}^{Q}, \, \vec{p}^{R}, \, \vec{z}^{\pi}_{1}, \, \vec{z}^{\pi}_{2}, \, \vec{z}^{\pi}_{3}, \, \vec{z}^{\pi}_{4}\},\]  
\[\{\vec{z}_{1}, \, \vec{z}^{Q}_{2}, \, \vec{z}_{3}, \, \vec{z}^{Q}_{4}, \, \vec{z}^{R}_{1}, \, \vec{z}_{2}, \, \vec{z}^{R}_{3}, \, \vec{z}_{4}, \, \vec{p}^{Q}, \, \vec{p}^{R}\},\]
\[\{\vec{z}_{1}, \, \vec{z}^{Q}_{2}, \, \vec{z}_{3}, \, \vec{z}^{Q}_{4}, \, \vec{z}^{R}_{1}, \, \vec{z}_{2}, \, \vec{z}^{R}_{3}, \, \vec{z}_{4}\},\]
\[\{\vec{z}_{1}, \, \vec{z}^{Q}_{2}, \, \vec{z}_{3}, \, \vec{z}^{Q}_{4}, \, \vec{z}^{R}_{1}, \, \vec{z}_{2}, \, \vec{z}^{R}_{3}, \, \vec{z}_{4}, \, \vec{z}^{\pi}_{1}, \, \vec{z}^{\pi}_{2}, \, \vec{z}^{\pi}_{3}, \, \vec{z}^{\pi}_{4}\}. \]

Для начала покажем, что 
\[(\Delta^{\pi}_{k}\cap \Z^{4}) \subset \{\vec{p}, \vec{z}^{\pi}_{1}, \, \vec{z}^{\pi}_{2}, \, \vec{z}^{\pi}_{3}, \, \vec{z}^{\pi}_{4}, \, \frac{\vec{z}^{\pi}_{1} + \vec{z}^{\pi}_{2}}{2}, \, \frac{\vec{z}^{\pi}_{2} + \vec{z}^{\pi}_{3}}{2}, \, \frac{\vec{z}^{\pi}_{3} + \vec{z}^{\pi}_{4}}{2}, \, \frac{\vec{z}^{\pi}_{4} + \vec{z}^{\pi}_{1}}{2}\}.\]
Предположим, что это не так. Без ограничения общности, будем считать, что существует целая точка $\vec{w}$ лежащая в параллелограмме $\textup{conv}(\vec{z}^{\pi}_{1}, \frac{\vec{z}^{\pi}_{1} + \vec{z}^{\pi}_{2}}{2}, \vec{p}, \frac{\vec{z}^{\pi}_{4} + \vec{z}^{\pi}_{1}}{2})$ и не совпадающая ни с какой вершиной этого параллелограмма. Если $\vec{w}$ является точкой пересечения диагоналей параллелограмма $\textup{conv}(\vec{z}^{\pi}_{1}, \frac{\vec{z}^{\pi}_{1} + \vec{z}^{\pi}_{2}}{2}, \vec{p}, \frac{\vec{z}^{\pi}_{4} + \vec{z}^{\pi}_{1}}{2})$, то целая точка  $\vec{z}_{1} + \big(G(\vec{w}) - \vec{w}\big)$, не совпадающая с точкой $\vec{p}^{Q}$ и ни с какой из вершин параллелограмма $\Delta^{Q}_{k}$, лежит в параллелограмме $\Delta^{Q}_{k}$. Если же $\vec{w}$ не является точкой пересечения диагоналей параллелограмма $\textup{conv}(\vec{z}^{\pi}_{1}, \frac{\vec{z}^{\pi}_{1} + \vec{z}^{\pi}_{2}}{2}, \vec{p}, \frac{\vec{z}^{\pi}_{4} + \vec{z}^{\pi}_{1}}{2})$, то целая точка  $\vec{z}_{1} + \big(G^{2}(\vec{w}) - \vec{w}\big)$, не совпадающая с точкой $\vec{p}^{Q}$ и ни с какой из вершин параллелограмма $\Delta^{Q}_{k}$, также лежит в параллелограмме $\Delta^{Q}_{k}$. В обоих случаях получаем противоречие построению тройки параллелограммов $(\Delta^{\pi}_{k}, \Delta^{Q}_{k}, \Delta^{R}_{k})$.

\textbf{А.} Предположим, что плоскость $\pi$ --- рациональная. Покажем, что этот случай разбивается на 4 принципиально разных случая.
 
\textbf{А.1.} Предположим, что $\vec{p} \in \Z^{4}$. В таком случае каждая из точек $\vec{z}^{R}_{1}$, $\vec{z}^{R}_{3}$, $\vec{z}^{Q}_{2}$, $\vec{z}^{Q}_{4}$ принадлежит решетке $\Z^{4}$. Ни одна из точек  $\frac{\vec{z}^{\pi}_{1} + \vec{z}^{\pi}_{2}}{2}$, $\frac{\vec{z}^{\pi}_{2} + \vec{z}^{\pi}_{3}}{2}$, $\frac{\vec{z}^{\pi}_{3} + \vec{z}^{\pi}_{4}}{2}$, $\frac{\vec{z}^{\pi}_{4} + \vec{z}^{\pi}_{1}}{2}$ не принадлежит решетке $\Z^{4}$, так как, в противном случае, середины сторон параллелограммов $\Delta^{Q}_{k}$ и $\Delta^{R}_{k}$ принадлежат решетке $\Z^{4}$. Теперь рассмотрим следующие случаи:

\textbf{А.1.1.} (будет соответствовать утверждению (1)) $\vec{p}^{Q} \notin \Z^{4}$, а значит и $\vec{p}^{R} \notin \Z^{4}$. Так как $\vec{p} \in \Z^{4}$, то ни какая из точек $\vec{z}^{\pi}_{1}$, $\vec{z}^{\pi}_{2}$, $\vec{z}^{\pi}_{3}$, $\vec{z}^{\pi}_{4}$ не принадлежит решетке $\Z^{4}$. Тогда
\[(\Delta^{\pi}_{k} \cup \Delta^{Q}_{k} \cup \Delta^{R}_{k}) \cap \Z^{4} = \{\vec{p}, \, \vec{z}_{1}, \, \vec{z}^{Q}_{2}, \, \vec{z}_{3}, \, \vec{z}^{Q}_{4}, \, \vec{z}^{R}_{1}, \, \vec{z}_{2}, \, \vec{z}^{R}_{3}, \, \vec{z}_{4} \},\]
и набор векторов $\vec{z}_{1}$, $\vec{z}_{2}$, $\vec{z}_{3}$, $\vec{p} = \frac{1}{4}(\vec{z}_{1}+\vec{z}_{2}+\vec{z}_{3}+\vec{z}_{4})$ образует базис решетки $\Z^{4}$, а значит выполняется утверждение (1).

 \begin{figure}[h]
  \centering
  \begin{tikzpicture}[x=10mm, y=7mm, z=-5mm,scale=0.7]
    \begin{scope}[rotate around x=0]
    
    \draw[->] [very thin] (-5,0,0) -- (5,0,0);
    \draw[->] [very thin] (0,-7,0) -- (0,7,0);
    \draw[->] [very thin] (0,0,-7) -- (0,0,7);

    \node[fill=black,circle,inner sep=1pt,opacity=0.5] at (0,0,0) {};
    \node[fill=black,circle,inner sep=1pt,opacity=0.5] at (0,0,2) {};
    \node[fill=black,circle,inner sep=1pt,opacity=0.5] at (0,0,-2) {};
    \node[fill=black,circle,inner sep=1pt,opacity=0.5] at (2,0,0) {};
    \node[fill=black,circle,inner sep=1pt,opacity=0.5] at (2,0,-2) {};
    \node[fill=black,circle,inner sep=1pt,opacity=0.5] at (-2,0,0) {};
    \node[fill=black,circle,inner sep=1pt,opacity=0.5] at (-2,0,2) {};
    \node[fill=black,circle,inner sep=1pt,opacity=0.5] at (0,-5,0) {};
    \node[fill=black,circle,inner sep=1pt,opacity=0.5] at (0,-5,2) {};
    \node[fill=black,circle,inner sep=1pt,opacity=0.5] at (0,-5,-2) {};
    \node[fill=black,circle,inner sep=1pt,opacity=0.5] at (2,-5,0) {};
    \node[fill=black,circle,inner sep=1pt,opacity=0.5] at (2,-5,2) {};
    \node[fill=black,circle,inner sep=1pt,opacity=0.5] at (2,-5,-2) {};
    \node[fill=black,circle,inner sep=1pt,opacity=0.5] at (-2,-5,0) {};
    \node[fill=black,circle,inner sep=1pt,opacity=0.5] at (-2,-5,2) {};
    \node[fill=black,circle,inner sep=1pt,opacity=0.5] at (-2,-5,-2) {};
    \node[fill=black,circle,inner sep=1pt,opacity=0.5] at (0,5,0) {};
    \node[fill=black,circle,inner sep=1pt,opacity=0.5] at (0,5,2) {};
    \node[fill=black,circle,inner sep=1pt,opacity=0.5] at (0,5,-2) {};
    \node[fill=black,circle,inner sep=1pt,opacity=0.5] at (2,5,0) {};
    \node[fill=black,circle,inner sep=1pt,opacity=0.5] at (2,5,2) {};
    \node[fill=black,circle,inner sep=1pt,opacity=0.5] at (2,5,-2) {};
    \node[fill=black,circle,inner sep=1pt,opacity=0.5] at (-2,5,0) {};
    \node[fill=black,circle,inner sep=1pt,opacity=0.5] at (-2,5,2) {};
    \node[fill=black,circle,inner sep=1pt,opacity=0.5] at (-2,5,-2) {};
       
    \coordinate (z_1) at (-2,0,0);
    \coordinate (z_3) at (0,-5,2);
    \coordinate (z_2) at (0,5,0);
    \coordinate (z_4) at (2,0,-2);
    
    \coordinate (p_pl_1) at ($ (z_1)!1/2!(z_3) $);
    \coordinate (p_mi_1) at ($ (z_2)!1/2!(z_4) $);
    \coordinate (z_1_z_2_half) at ($ (z_1)!1/2!(z_2) $);
    \coordinate (z_2_z_3_half) at ($ (z_2)!1/2!(z_3) $);
    \coordinate (z_3_z_4_half) at ($ (z_3)!1/2!(z_4) $);
    \coordinate (z_4_z_1_half) at ($ (z_4)!1/2!(z_1) $); 
    
    \coordinate (p) at ($ (p_pl_1)!1/2!(p_mi_1) $);
    \coordinate (z_1_pl_1) at ($(p_mi_1) + (z_1) - (p_pl_1) $);
    \coordinate (z_3_pl_1) at ($(p_mi_1) + (z_3) - (p_pl_1) $);
    \coordinate (z_2_mi_1) at ($(p_pl_1) + (z_2) - (p_mi_1) $);
    \coordinate (z_4_mi_1) at ($(p_pl_1) + (z_4) - (p_mi_1) $);
    
    \coordinate (z_1_p) at ($ (z_1)!1/2!(z_1_pl_1) $);
    \coordinate (z_3_p) at ($(z_3)!1/2!(z_3_pl_1)$);
    \coordinate (z_2_p) at ($(z_2)!1/2!(z_2_mi_1)$);
    \coordinate (z_4_p) at ($(z_4)!1/2!(z_4_mi_1)$);
        
    \fill[blue!05,opacity=0.3] (3,0,3) -- (3,0,-3) -- (-3,0,-3) -- (-3,0,3) -- cycle;
    \fill[blue!05,opacity=0.3] (3,-5,3) -- (3,-5,-3) -- (-3,-5,-3) -- (-3,-5,3) -- cycle;   
    \fill[blue!05,opacity=0.3] (3,5,3) -- (3,5,-3) -- (-3,5,-3) -- (-3,5,3) -- cycle;
      
    \fill[blue!25,opacity=0.4] (-2,5,-2) -- (-2,5,2) -- (2,-5,2) -- (2,-5,-2) -- cycle;
    
    \fill[blue!25,opacity=0.7] (z_1) -- (z_2_mi_1) -- (z_3) -- (z_4_mi_1) -- cycle;
     
    \fill[blue!25,opacity=0.7] (z_1_pl_1) -- (z_2) -- (z_3_pl_1) -- (z_4) -- cycle;
      
    \fill[blue!25,opacity=0.7] (z_1_p) -- (z_2_p) -- (z_3_p) -- (z_4_p) -- cycle;
      
    \node[fill=blue,circle,inner sep=1pt] at (z_1) {};
    \draw (z_1) node[left] {$\vec z_1$};
    
    \node[fill=blue,circle,inner sep=1pt] at (z_3) {};
    \draw (z_3) node[below] {$\vec z_3$};
    
    \node[fill=blue,circle,inner sep=1pt] at (z_2) {};
    \draw (z_2) node[above] {$\vec z_2$};
    
    \node[fill=blue,circle,inner sep=1pt] at (z_4) {};
    \draw (z_4) node[right] {$\vec z_4$};
 
    \node[fill=blue,circle,inner sep=1pt] at (z_1_pl_1) {};
    \draw (z_1_pl_1) node[right] {$\vec z_1^{R}$};
    
    \node[fill=blue,circle,inner sep=1pt] at (z_3_pl_1) {};
    \draw (z_3_pl_1) node[left] {$\vec z_3^{R}$};
    
    \node[fill=blue,circle,inner sep=1pt] at (z_2_mi_1) {};
    \draw (z_2_mi_1) node[left] {$\vec z_2^{Q}$};
    
    \node[fill=blue,circle,inner sep=1pt] at (z_4_mi_1) {};
    \draw (z_4_mi_1) node[right] {$\vec z_4^{Q}$};
    
    \node[fill=white,circle,inner sep=1pt,draw=blue] at (z_1_p) {};
    \draw (z_1_p) node[right] {$\vec z_1^{\pi}$};
    
    \node[fill=white,circle,inner sep=1pt,draw=blue] at (z_3_p) {};
    \draw (z_3_p) node[left] {$\vec z_3^{\pi}$};
    
    \node[fill=white,circle,inner sep=1pt,draw=blue] at (z_2_p) {};
    \draw (z_2_p) node[below left] {$\vec z_2^{\pi}$};
    
    \node[fill=white,circle,inner sep=1pt,draw=blue] at (z_4_p) {};
    \draw (z_4_p) node[right] {$\vec z_4^{\pi}$};
  
    \node[fill=blue,circle,inner sep=1pt] at (p) {};
    \draw (p) node[right] {$\vec p$};

    \node[fill=white,circle,inner sep=1pt,draw=blue] at (p_pl_1) {};
    \draw (p_pl_1) node[right] {$\vec{p}_{Q}$};
    
    \node[fill=white,circle,inner sep=1pt,draw=blue] at (p_mi_1) {};
    \draw (p_mi_1) node[above right] {$\vec{p}_{R}$};
    
    \node[fill=white,circle,inner sep=1pt,draw=blue] at (z_1_z_2_half) {};
    \draw (z_1_z_2_half) node[left] {$\frac{\vec{z}_1^{\pi} + \vec{z}_2^{\pi}}{2}$};
    
    \node[fill=white,circle,inner sep=1pt,draw=blue] at (z_2_z_3_half) {};
    \draw (z_2_z_3_half) node[below right] {$\frac{\vec{z}_2^{\pi} + \vec{z}_3^{\pi}}{2}$};
    
    \node[fill=white,circle,inner sep=1pt,draw=blue] at (z_3_z_4_half) {};
    \draw (z_3_z_4_half) node[below right] {$\frac{\vec{z}_3^{\pi} + \vec{z}_4^{\pi}}{2}$};
    
    \node[fill=white,circle,inner sep=1pt,draw=blue] at (z_4_z_1_half) {};
    \draw (z_4_z_1_half) node[above] {$\frac{\vec{z}_4^{\pi} + \vec{z}_1^{\pi}}{2}$};
    
    \draw[->] [red,very thin] (z_1) -- (p);
    \draw[->] [red,very thin] (z_1) -- (z_2);
    \draw[->] [red,very thin] (z_1) -- (z_3);
    
    \draw (5, 0, 0) node[right] {$x$};
    \draw (0, 7, 0) node[left] {$z$};
    \draw (0, 0, 7) node[left] {$y$};

    \end{scope}
  \end{tikzpicture}
  \caption{Расположение точек внутри гиперплоскости $S_{1}$ из случая (1) леммы \ref{main_lem}}
\end{figure}

\textbf{А.1.2.} (будет соответствовать утверждению (4))  $\vec{p}^{Q} \in \Z^{4}$, а значит и $\vec{p}^{R} \in \Z^{4}$. Так как $\vec{p} \in \Z^{4}$, то каждая из точек $\vec{z}^{\pi}_{1}$, $\vec{z}^{\pi}_{2}$, $\vec{z}^{\pi}_{3}$, $\vec{z}^{\pi}_{4}$ принадлежит решетке $\Z^{4}$. Тогда
\[(\Delta^{\pi}_{k} \cup \Delta^{Q}_{k} \cup \Delta^{R}_{k}) \cap \Z^{4} = \{\vec{p}, \, \vec{z}_{1}, \, \vec{z}^{Q}_{2}, \, \vec{z}_{3}, \, \vec{z}^{Q}_{4}, \, \vec{z}^{R}_{1}, \, \vec{z}_{2}, \, \vec{z}^{R}_{3}, \, \vec{z}_{4}, \, \vec{p}^{Q}, \, \vec{p}^{R}, \, \vec{z}^{\pi}_{1}, \, \vec{z}^{\pi}_{2}, \, \vec{z}^{\pi}_{3}, \, \vec{z}^{\pi}_{4} \},\]
и набор векторов $\vec{z}_{1}$, $\vec{z}_{2}$, $\vec{p}_{Q} = \frac{1}{2}(\vec{z}_{1}+\vec{z}_{3})$, $\vec{p} = \frac{1}{4}(\vec{z}_{1}+\vec{z}_{2}+\vec{z}_{3}+\vec{z}_{4})$ образует базис решетки $\Z^{4}$, а значит выполняется утверждение (4).

 \begin{figure}[h]
  \centering
  \begin{tikzpicture}[x=10mm, y=7mm, z=-5mm, scale=0.7]
    \begin{scope}[rotate around x=0]
    
    \draw[->] [very thin] (-5,0,0) -- (5,0,0);
    \draw[->] [very thin] (0,-7,0) -- (0,7,0);
    \draw[->] [very thin] (0,0,-7) -- (0,0,7);
    
    \node[fill=black,circle,inner sep=1pt,opacity=0.5] at (0,0,0) {};
    \node[fill=black,circle,inner sep=1pt,opacity=0.5] at (0,0,2) {};
    \node[fill=black,circle,inner sep=1pt,opacity=0.5] at (0,0,-2) {};
    \node[fill=black,circle,inner sep=1pt,opacity=0.5] at (2,0,0) {};
    \node[fill=black,circle,inner sep=1pt,opacity=0.5] at (2,0,2) {};
    \node[fill=black,circle,inner sep=1pt,opacity=0.5] at (2,0,-2) {};
    \node[fill=black,circle,inner sep=1pt,opacity=0.5] at (-2,0,0) {};
    \node[fill=black,circle,inner sep=1pt,opacity=0.5] at (-2,0,2) {};
    \node[fill=black,circle,inner sep=1pt,opacity=0.5] at (-2,0,-2) {};
    \node[fill=black,circle,inner sep=1pt,opacity=0.5] at (0,-5,0) {};
    \node[fill=black,circle,inner sep=1pt,opacity=0.5] at (0,-5,2) {};
    \node[fill=black,circle,inner sep=1pt,opacity=0.5] at (0,-5,-2) {};
    \node[fill=black,circle,inner sep=1pt,opacity=0.5] at (2,-5,0) {};
    \node[fill=black,circle,inner sep=1pt,opacity=0.5] at (2,-5,2) {};
    \node[fill=black,circle,inner sep=1pt,opacity=0.5] at (2,-5,-2) {};
    \node[fill=black,circle,inner sep=1pt,opacity=0.5] at (-2,-5,0) {};
    \node[fill=black,circle,inner sep=1pt,opacity=0.5] at (-2,-5,2) {};
    \node[fill=black,circle,inner sep=1pt,opacity=0.5] at (-2,-5,-2) {};
    \node[fill=black,circle,inner sep=1pt,opacity=0.5] at (2,-5,4) {};
    \node[fill=black,circle,inner sep=1pt,opacity=0.5] at (0,-5,4) {};
    \node[fill=black,circle,inner sep=1pt,opacity=0.5] at (-2,-5,4) {};
    \node[fill=black,circle,inner sep=1pt,opacity=0.5] at (-4,-5,4) {};
    \node[fill=black,circle,inner sep=1pt,opacity=0.5] at (-4,-5,2) {};
    \node[fill=black,circle,inner sep=1pt,opacity=0.5] at (-4,-5,0) {};
    \node[fill=black,circle,inner sep=1pt,opacity=0.5] at (-4,-5,-2) {};    
    \node[fill=black,circle,inner sep=1pt,opacity=0.5] at (0,5,0) {};
    \node[fill=black,circle,inner sep=1pt,opacity=0.5] at (0,5,2) {};
    \node[fill=black,circle,inner sep=1pt,opacity=0.5] at (0,5,-2) {};
    \node[fill=black,circle,inner sep=1pt,opacity=0.5] at (2,5,0) {};
    \node[fill=black,circle,inner sep=1pt,opacity=0.5] at (2,5,2) {};
    \node[fill=black,circle,inner sep=1pt,opacity=0.5] at (2,5,-2) {};
    \node[fill=black,circle,inner sep=1pt,opacity=0.5] at (-2,5,0) {};
    \node[fill=black,circle,inner sep=1pt,opacity=0.5] at (-2,5,2) {};
    \node[fill=black,circle,inner sep=1pt,opacity=0.5] at (-2,5,-2) {};
    \node[fill=black,circle,inner sep=1pt,opacity=0.5] at (4,5,-2) {};
    \node[fill=black,circle,inner sep=1pt,opacity=0.5] at (4,5,2) {};
    \node[fill=black,circle,inner sep=1pt,opacity=0.5] at (4,5,0) {};
    \node[fill=black,circle,inner sep=1pt,opacity=0.5] at (4,5,-4) {};
    \node[fill=black,circle,inner sep=1pt,opacity=0.5] at (0,5,-4) {};
    \node[fill=black,circle,inner sep=1pt,opacity=0.5] at (-2,5,-4) {};
    
    \coordinate (z_1) at (0,5,-2);
    \coordinate (z_3) at (4,5,-2);
    \coordinate (z_2) at (-2,-5,4);
    \coordinate (z_4) at (-2,-5,0);
    
    \coordinate (p_pl_1) at ($ (z_1)!1/2!(z_3) $);
    \coordinate (p_mi_1) at ($ (z_2)!1/2!(z_4) $);
    \coordinate (z_1_z_2_half) at ($ (z_1)!1/2!(z_2) $);
    \coordinate (z_2_z_3_half) at ($ (z_2)!1/2!(z_3) $);
    \coordinate (z_3_z_4_half) at ($ (z_3)!1/2!(z_4) $);
    \coordinate (z_4_z_1_half) at ($ (z_4)!1/2!(z_1) $); 
    
    \coordinate (p) at ($ (p_pl_1)!1/2!(p_mi_1) $);
    \coordinate (z_1_pl_1) at ($(p_mi_1) + (z_1) - (p_pl_1) $);
    \coordinate (z_3_pl_1) at ($(p_mi_1) + (z_3) - (p_pl_1) $);
    \coordinate (z_2_mi_1) at ($(p_pl_1) + (z_2) - (p_mi_1) $);
    \coordinate (z_4_mi_1) at ($(p_pl_1) + (z_4) - (p_mi_1) $);
    
    \coordinate (z_1_p) at ($ (z_1)!1/2!(z_1_pl_1) $);
    \coordinate (z_3_p) at ($(z_3)!1/2!(z_3_pl_1)$);
    \coordinate (z_2_p) at ($(z_2)!1/2!(z_2_mi_1)$);
    \coordinate (z_4_p) at ($(z_4)!1/2!(z_4_mi_1)$);
    
    \fill[blue!05,opacity=0.3] (3,0,3) -- (3,0,-3) -- (-3,0,-3) -- (-3,0,3) -- cycle;
    \fill[blue!05,opacity=0.3] (3,-5,5) -- (3,-5,-3) -- (-5,-5,-3) -- (-5,-5,5) -- cycle;   
    \fill[blue!05,opacity=0.3] (5,5,3) -- (5,5,-5) -- (-3,5,-5) -- (-3,5,3) -- cycle;
     
    \fill[blue!25,opacity=0.7] (z_1) -- (z_2_mi_1) -- (z_3) -- (z_4_mi_1) -- cycle;
     
    \fill[blue!25,opacity=0.7] (z_1_pl_1) -- (z_2) -- (z_3_pl_1) -- (z_4) -- cycle;
      
    \fill[blue!25,opacity=0.7] (z_1_p) -- (z_2_p) -- (z_3_p) -- (z_4_p) -- cycle;
    
    \node[fill=blue,circle,inner sep=1pt] at (z_1) {};
    \draw (z_1) node[left] {$\vec z_1$};
    
    \node[fill=blue,circle,inner sep=1pt] at (z_3) {};
    \draw (z_3) node[right] {$\vec z_3$};
    
    \node[fill=blue,circle,inner sep=1pt] at (z_2) {};
    \draw (z_2) node[left] {$\vec z_2$};
    
    \node[fill=blue,circle,inner sep=1pt] at (z_4) {};
    \draw (z_4) node[right] {$\vec z_4$};

    \node[fill=blue,circle,inner sep=1pt] at (z_1_pl_1) {};
    \draw (z_1_pl_1) node[left] {$\vec z_1^{R}$};
    
    \node[fill=blue,circle,inner sep=1pt] at (z_3_pl_1) {};
    \draw (z_3_pl_1) node[below right] {$\vec z_3^{R}$};
    
    \node[fill=blue,circle,inner sep=1pt] at (z_2_mi_1) {};
    \draw (z_2_mi_1) node[left] {$\vec z_2^{Q}$};
    
    \node[fill=blue,circle,inner sep=1pt] at (z_4_mi_1) {};
    \draw (z_4_mi_1) node[right] {$\vec z_4^{Q}$};
    
     \node[fill=blue,circle,inner sep=1pt] at (z_1_p) {};
    \draw (z_1_p) node[above left] {$\vec z_1^{\pi}$};
    
    \node[fill=blue,circle,inner sep=1pt] at (z_3_p) {};
    \draw (z_3_p) node[below right] {$\vec z_3^{\pi}$};
    
    \node[fill=blue,circle,inner sep=1pt] at (z_2_p) {};
    \draw (z_2_p) node[below left] {$\vec z_2^{\pi}$};
    
    \node[fill=blue,circle,inner sep=1pt] at (z_4_p) {};
    \draw (z_4_p) node[above] {$\vec z_4^{\pi}$};
    
    \node[fill=blue,circle,inner sep=1pt] at (p) {};
    \draw (p) node[right] {$\vec p$};
 
    \node[fill=blue,circle,inner sep=1pt] at (p_pl_1) {};
    \draw (p_pl_1) node[above] {$\vec{p}_{Q}$};
    
    \node[fill=blue,circle,inner sep=1pt,draw=blue] at (p_mi_1) {};
    \draw (p_mi_1) node[right] {$\vec{p}_{R}$};
    
     \node[fill=white,circle,inner sep=1pt,draw=blue] at (z_1_z_2_half) {};
    \draw (z_1_z_2_half) node[left] {$\frac{\vec{z}_1^{\pi} + \vec{z}_2^{\pi}}{2}$};
    
    \node[fill=white,circle,inner sep=1pt,draw=blue] at (z_2_z_3_half) {};
    \draw (z_2_z_3_half) node[below] {$\frac{\vec{z}_2^{\pi} + \vec{z}_3^{\pi}}{2}$};
    
    \node[fill=white,circle,inner sep=1pt,draw=blue] at (z_3_z_4_half) {};
    \draw (z_3_z_4_half) node[right] {$\frac{\vec{z}_3^{\pi} + \vec{z}_4^{\pi}}{2}$};
    
    \node[fill=white,circle,inner sep=1pt,draw=blue] at (z_4_z_1_half) {};
    \draw (z_4_z_1_half) node[left] {$\frac{\vec{z}_4^{\pi} + \vec{z}_1^{\pi}}{2}$};
    
   \draw[->] [red,very thin] (z_1) -- (p);
   \draw[->] [red,very thin] (z_1) -- (z_2);
   \draw[->] [red,very thin] (z_1) -- (p_pl_1);

    \draw (5, 0, 0) node[right] {$x$};
    \draw (0, 7, 0) node[left] {$z$};
    \draw (0, 0, 7) node[left] {$y$};
    
    \end{scope}
  \end{tikzpicture}
  \caption{Расположение точек внутри гиперплоскости $S_{1}$ из случая (4) леммы \ref{main_lem}}
\end{figure}

\textbf{А.2.} Предположим, что $\vec{p} \notin \Z^{4}$. Теперь рассмотрим следующие случаи:

\textbf{А.2.1.} (будет соответствовать утверждению (3)) Каждая из точек  $\frac{\vec{z}^{\pi}_{1} + \vec{z}^{\pi}_{2}}{2}$, $\frac{\vec{z}^{\pi}_{2} + \vec{z}^{\pi}_{3}}{2}$, $\frac{\vec{z}^{\pi}_{3} + \vec{z}^{\pi}_{4}}{2}$, $\frac{\vec{z}^{\pi}_{4} + \vec{z}^{\pi}_{1}}{2}$ принадлежит решетке $\Z^{4}$. В этом случае каждая из точек $\vec{p}^{Q}$, $\vec{p}^{R}$, $ \vec{z}^{R}_{1}$, $ \vec{z}^{R}_{3}$, $\vec{z}^{Q}_{2}$, $\vec{z}^{Q}_{4}$ принадлежит решетке $\Z^{4}$ и никакая из точек $\vec{z}^{\pi}_{1}$, $\vec{z}^{\pi}_{2}$, $\vec{z}^{\pi}_{3}$, $\vec{z}^{\pi}_{4}$ не принадлежит решетке $\Z^{4}$. Тогда
\[(\Delta^{\pi}_{k} \cup \Delta^{Q}_{k} \cup \Delta^{R}_{k}) \cap \Z^{4} = \]
\[\{\vec{z}_{1}, \, \vec{z}^{Q}_{2}, \, \vec{z}_{3}, \, \vec{z}^{Q}_{4}, \, \vec{z}^{R}_{1}, \, \vec{z}_{2}, \, \vec{z}^{R}_{3}, \, \vec{z}_{4}, \, \vec{p}^{Q}, \, \vec{p}^{R}, \, \frac{\vec{z}^{\pi}_{1} + \vec{z}^{\pi}_{2}}{2}, \, \frac{\vec{z}^{\pi}_{2} + \vec{z}^{\pi}_{3}}{2}, \, \frac{\vec{z}^{\pi}_{3} + \vec{z}^{\pi}_{4}}{2}, \, \frac{\vec{z}^{\pi}_{4} + \vec{z}^{\pi}_{1}}{2}\},\]
и набор векторов $\vec{z}_{1}$, $\frac{\vec{z}^{\pi}_{1} + \vec{z}^{\pi}_{2}}{2} = \frac{1}{2}(\vec{z}_{1} + \vec{z}_{2})$, $\vec{p}^{Q} = \frac{1}{2}(\vec{z}_{1}+\vec{z}_{3})$, $\frac{\vec{z}^{\pi}_{4} + \vec{z}^{\pi}_{1}}{2} = \frac{1}{2}(\vec{z}_{1} + \vec{z}_{4})$ образует базис решетки $\Z^{4}$, а значит выполняется утверждение (3).

\begin{figure}[h]
  \centering
  \begin{tikzpicture}[x=10mm, y=7mm, z=-5mm,scale=0.7]
    \begin{scope}[rotate around x=0]

    \draw[->] [very thin] (-5,0,0) -- (5,0,0);
    \draw[->] [very thin] (0,-7,0) -- (0,13,0);
    \draw[->] [very thin] (0,0,-7) -- (0,0,7);

    \node[fill=black,circle,inner sep=1pt,opacity=0.5] at (0,0,0) {};
    \node[fill=black,circle,inner sep=1pt,opacity=0.5] at (0,0,2) {};
    \node[fill=black,circle,inner sep=1pt,opacity=0.5] at (0,0,-2) {};
    \node[fill=black,circle,inner sep=1pt,opacity=0.5] at (2,0,0) {};
    \node[fill=black,circle,inner sep=1pt,opacity=0.5] at (2,0,-2) {};
    \node[fill=black,circle,inner sep=1pt,opacity=0.5] at (-2,0,0) {};
    \node[fill=black,circle,inner sep=1pt,opacity=0.5] at (-2,0,2) {};
    \node[fill=black,circle,inner sep=1pt,opacity=0.5] at (0,-5,0) {};
    \node[fill=black,circle,inner sep=1pt,opacity=0.5] at (0,-5,2) {};
    \node[fill=black,circle,inner sep=1pt,opacity=0.5] at (0,-5,-2) {};
    \node[fill=black,circle,inner sep=1pt,opacity=0.5] at (2,-5,0) {};
    \node[fill=black,circle,inner sep=1pt,opacity=0.5] at (2,-5,2) {};
    \node[fill=black,circle,inner sep=1pt,opacity=0.5] at (2,-5,-2) {};
    \node[fill=black,circle,inner sep=1pt,opacity=0.5] at (-2,-5,0) {};
    \node[fill=black,circle,inner sep=1pt,opacity=0.5] at (-2,-5,2) {};
    \node[fill=black,circle,inner sep=1pt,opacity=0.5] at (-2,-5,-2) {};  
    \node[fill=black,circle,inner sep=1pt,opacity=0.5] at (0,5,0) {};
    \node[fill=black,circle,inner sep=1pt,opacity=0.5] at (0,5,2) {};
    \node[fill=black,circle,inner sep=1pt,opacity=0.5] at (0,5,-2) {};
    \node[fill=black,circle,inner sep=1pt,opacity=0.5] at (2,5,0) {};
    \node[fill=black,circle,inner sep=1pt,opacity=0.5] at (2,5,2) {};
    \node[fill=black,circle,inner sep=1pt,opacity=0.5] at (2,5,-2) {};
    \node[fill=black,circle,inner sep=1pt,opacity=0.5] at (-2,5,0) {};
    \node[fill=black,circle,inner sep=1pt,opacity=0.5] at (-2,5,2) {};
    \node[fill=black,circle,inner sep=1pt,opacity=0.5] at (-2,5,-2) {};    
    \node[fill=black,circle,inner sep=1pt,opacity=0.5] at (0,10,0) {};
    \node[fill=black,circle,inner sep=1pt,opacity=0.5] at (0,10,2) {};
    \node[fill=black,circle,inner sep=1pt,opacity=0.5] at (0,10,-2) {};
    \node[fill=black,circle,inner sep=1pt,opacity=0.5] at (2,10,0) {};
    \node[fill=black,circle,inner sep=1pt,opacity=0.5] at (2,10,2) {};
    \node[fill=black,circle,inner sep=1pt,opacity=0.5] at (2,10,-2) {};
    \node[fill=black,circle,inner sep=1pt,opacity=0.5] at (-2,10,0) {};
    \node[fill=black,circle,inner sep=1pt,opacity=0.5] at (-2,10,2) {};
    \node[fill=black,circle,inner sep=1pt,opacity=0.5] at (-2,10,-2) {};    
    
    \coordinate (z_1) at (0,0,0);
    \coordinate (z_3) at (0,0,4);
    \coordinate (z_2) at (0,10,0);
    \coordinate (z_4) at (4,0,0);
    
    \coordinate (p_pl_1) at ($ (z_1)!1/2!(z_3) $);
    \coordinate (p_mi_1) at ($ (z_2)!1/2!(z_4) $);
    \coordinate (z_1_z_2_half) at ($ (z_1)!1/2!(z_2) $);
    \coordinate (z_2_z_3_half) at ($ (z_2)!1/2!(z_3) $);
    \coordinate (z_3_z_4_half) at ($ (z_3)!1/2!(z_4) $);
    \coordinate (z_4_z_1_half) at ($ (z_4)!1/2!(z_1) $); 
    
    \coordinate (p) at ($ (p_pl_1)!1/2!(p_mi_1) $);
    \coordinate (z_1_pl_1) at ($(p_mi_1) + (z_1) - (p_pl_1) $);
    \coordinate (z_3_pl_1) at ($(p_mi_1) + (z_3) - (p_pl_1) $);
    \coordinate (z_2_mi_1) at ($(p_pl_1) + (z_2) - (p_mi_1) $);
    \coordinate (z_4_mi_1) at ($(p_pl_1) + (z_4) - (p_mi_1) $);
    
    \coordinate (z_1_p) at ($ (z_1)!1/2!(z_1_pl_1) $);
    \coordinate (z_3_p) at ($(z_3)!1/2!(z_3_pl_1)$);
    \coordinate (z_2_p) at ($(z_2)!1/2!(z_2_mi_1)$);
    \coordinate (z_4_p) at ($(z_4)!1/2!(z_4_mi_1)$);    
    
    \fill[blue!05,opacity=0.3] (3,0,3) -- (3,0,-3) -- (-3,0,-3) -- (-3,0,3) -- cycle;
    \fill[blue!05,opacity=0.3] (3,-5,3) -- (3,-5,-3) -- (-3,-5,-3) -- (-3,-5,3) -- cycle;   
    \fill[blue!05,opacity=0.3] (3,5,3) -- (3,5,-3) -- (-3,5,-3) -- (-3,5,3) -- cycle;
    \fill[blue!05,opacity=0.3] (3,10,3) -- (3,10,-3) -- (-3,10,-3) -- (-3,10,3) -- cycle;

    \fill[blue!25,opacity=0.3] (z_1) -- (z_2_mi_1) -- (z_3) -- (z_4_mi_1) -- cycle;
     
    \fill[blue!25,opacity=0.6] (z_1_p) -- (z_2_p) -- (z_3_p) -- (z_4_p) -- cycle;
     
    \fill[blue!25,opacity=0.9] (z_1_pl_1) -- (z_2) -- (z_3_pl_1) -- (z_4) -- cycle;  
    
    \node[fill=blue,circle,inner sep=1pt] at (z_1) {};
    \draw (z_1) node[below right] {$\vec z_1$};
    
    \node[fill=blue,circle,inner sep=1pt] at (z_3) {};
    \draw (z_3) node[left] {$\vec z_3$};
    
    \node[fill=blue,circle,inner sep=1pt] at (z_2) {};
    \draw (z_2) node[above] {$\vec z_2$};
    
     \node[fill=blue,circle,inner sep=1pt] at (z_4) {};
    \draw (z_4) node[above right] {$\vec z_4$};
  
    \node[fill=blue,circle,inner sep=1pt] at (z_1_pl_1) {};
    \draw (z_1_pl_1) node[right] {$\vec z_1^{R}$};
    
    \node[fill=blue,circle,inner sep=1pt] at (z_3_pl_1) {};
    \draw (z_3_pl_1) node[above right] {$\vec z_3^{R}$};
    
    \node[fill=blue,circle,inner sep=1pt] at (z_2_mi_1) {};
    \draw (z_2_mi_1) node[left] {$\vec z_2^{Q}$};
    
    \node[fill=blue,circle,inner sep=1pt] at (z_4_mi_1) {};
    \draw (z_4_mi_1) node[right] {$\vec z_4^{Q}$};
    
    \node[fill=white,circle,inner sep=1pt,draw=blue] at (z_1_p) {};
    \draw (z_1_p) node[below right] {$\vec z_1^{\pi}$};
    
    \node[fill=white,circle,inner sep=1pt,draw=blue] at (z_3_p) {};
    \draw (z_3_p) node[left] {$\vec z_3^{\pi}$};
    
    \node[fill=white,circle,inner sep=1pt,draw=blue] at (z_2_p) {};
    \draw (z_2_p) node[below left] {$\vec z_2^{\pi}$};
    
    \node[fill=white,circle,inner sep=1pt,draw=blue] at (z_4_p) {};
    \draw (z_4_p) node[above right] {$\vec z_4^{\pi}$};

    \node[fill=white,circle,inner sep=1pt,draw=blue] at (p) {};
    \draw (p) node[right] {$\vec p$};

    \node[fill=blue,circle,inner sep=1pt] at (p_pl_1) {};
    \draw (p_pl_1) node[right] {$\vec{p}_{Q}$};
    
    \node[fill=blue,circle,inner sep=1pt] at (p_mi_1) {};
    \draw (p_mi_1) node[above right] {$\vec{p}_{R}$};
    
     \node[fill=blue,circle,inner sep=1pt] at (z_1_z_2_half) {};
    \draw (z_1_z_2_half) node[right] {$\frac{\vec{z}_1^{\pi} + \vec{z}_2^{\pi}}{2}$};
    
    \node[fill=blue,circle,inner sep=1pt] at (z_2_z_3_half) {};
    \draw (z_2_z_3_half) node[left] {$\frac{\vec{z}_2^{\pi} + \vec{z}_3^{\pi}}{2}$};
    
    \node[fill=blue,circle,inner sep=1pt] at (z_3_z_4_half) {};
    \draw (z_3_z_4_half) node[below] {$\frac{\vec{z}_3^{\pi} + \vec{z}_4^{\pi}}{2}$};
    
     \node[fill=blue,circle,inner sep=1pt] at (z_4_z_1_half) {};
    \draw (z_4_z_1_half) node[above right] {$\frac{\vec{z}_4^{\pi} + \vec{z}_1^{\pi}}{2}$};
    
    \draw[->] [red,very thin] (z_1) -- (z_1_z_2_half);
    \draw[->] [red,very thin] (z_1) -- (z_2_z_3_half);
    \draw[->] [red,very thin] (z_1) -- (z_3_z_4_half);

    \draw (5, 0, 0) node[right] {$x$};
    \draw (0, 13, 0) node[left] {$z$};
    \draw (0, 0, 7) node[left] {$y$};

    \end{scope}
  \end{tikzpicture}
  \caption{Расположение точек внутри гиперплоскости $S_{1}$ из случая (3) леммы \ref{main_lem}}
\end{figure}

\textbf{А.2.2.} (будет соответствовать утверждению (7)) Никакая из точек  $\frac{\vec{z}^{\pi}_{1} + \vec{z}^{\pi}_{2}}{2}$, $\frac{\vec{z}^{\pi}_{2} + \vec{z}^{\pi}_{3}}{2}$, $\frac{\vec{z}^{\pi}_{3} + \vec{z}^{\pi}_{4}}{2}$, $\frac{\vec{z}^{\pi}_{4} + \vec{z}^{\pi}_{1}}{2}$ не принадлежит решетке $\Z^{4}$. 

Допустим, что никакая из точек $\vec{z}^{R}_{1}$, $\vec{z}^{R}_{3}$, $\vec{z}^{Q}_{2}$, $\vec{z}^{Q}_{4}$ не принадлежит решетке $\Z^{4}$. Пусть $\vec{r}$ --- некоторый вектор, соединяющий две целые точки из плоскостей $\pi$ и $Q$ соответственно (такой вектор существует, поскольку, по предположению, плоскость $\pi$ --- рациональная). Рассмотрим набор $\vec{z}_{3} - \vec{z}_{1}$, $\vec{z}_{4} - \vec{z}_{2}$, $\vec{z}_{1}$, $\vec{r}$, который является базисом решетки $\Z^{4}$. При этом, первые две координаты точки $\vec{z}_{1}$ в рассматриваемом базисе имеют вид $(0, 0)$, точки $\vec{z}_3$ --- $(1, 0)$, точки $\vec{z}^{Q}_{4}$ --- $(\frac{1}{2}, \frac{1}{2})$, а точки $\vec{z}^{Q}_{2}$ --- $(\frac{1}{2}, -\frac{1}{2})$. Предположим $(z_1, z_2)$ --- первые две координаты точки $\vec{z}_{4}$. Тогда первые две координаты точки $\vec{z}_2$ имеют вид $(z_1, z_{2}-1)$, точки  $\vec{z}^{R}_{1}$ --- $(z_{1} - \frac{1}{2}, z_{2} - \frac{1}{2})$, а точки  $\vec{z}^{R}_{3}$ --- $(z_{1} + \frac{1}{2}, z_{2} - \frac{1}{2})$. Отсюда следует, что первые две координаты точки $\frac{\vec{z}^{\pi}_{1} + \vec{z}^{\pi}_{2}}{2}$ имеют вид $(\frac{z_{1}}{2}, \frac{z_{2} - 1}{2})$, точки $\frac{\vec{z}^{\pi}_{2} + \vec{z}^{\pi}_{3}}{2}$ --- $(\frac{z_{1} + 1}{2}, \frac{z_{2} - 1}{2})$, точки $\frac{\vec{z}^{\pi}_{3} + \vec{z}^{\pi}_{4}}{2}$ --- $(\frac{z_{1} + 1}{2}, \frac{z_{2}}{2})$, а точки $\frac{\vec{z}^{\pi}_{4} + \vec{z}^{\pi}_{1}}{2}$ --- $(\frac{z_{1}}{2}, \frac{z_{2}}{2})$. Из соображений четности получаем противоречие с допущением.

Итак, каждая из точек $\vec{z}^{R}_{1}$, $\vec{z}^{R}_{3}$, $\vec{z}^{Q}_{2}$, $\vec{z}^{Q}_{4}$ принадлежит решетке $\Z^{4}$, а значит каждая из точек $\vec{z}^{\pi}_{1}$, $\vec{z}^{\pi}_{2}$, $\vec{z}^{\pi}_{3}$, $\vec{z}^{\pi}_{4}$ принадлежит решетке $\Z^{4}$, при этом, поскольку $\vec{p} \notin \Z^{4}$, то $\vec{p}^{Q} \notin \Z^4$ и $\vec{p}^{R} \notin \Z^4$. Тогда
\[(\Delta^{\pi}_{k} \cup \Delta^{Q}_{k} \cup \Delta^{R}_{k}) \cap \Z^{4} = \{\vec{z}_{1}, \, \vec{z}^{Q}_{2}, \, \vec{z}_{3}, \, \vec{z}^{Q}_{4}, \, \vec{z}^{R}_{1}, \, \vec{z}_{2}, \, \vec{z}^{R}_{3}, \, \vec{z}_{4}, \, \vec{z}^{\pi}_{1}, \, \vec{z}^{\pi}_{2}, \, \vec{z}^{\pi}_{3}, \, \vec{z}^{\pi}_{4}\},\]
и набор векторов $\vec{z}_{1}$, $\vec{z}_{2}$, $\vec{z}_{3}$, $\vec{z}^{\pi}_{1} = \frac{1}{2}(\vec{z}_{1}+\vec{z}_{2}) + \frac{1}{4}(\vec{z}_{1}+\vec{z}_{4} - \vec{z}_{3} - \vec{z}_{2})$ образует базис решетки $\Z^{4}$, а значит выполняется утверждение (7).

  \begin{figure}[h]
  \centering
  \begin{tikzpicture}[x=10mm, y=7mm, z=-5mm,scale=0.7]
    \begin{scope}[rotate around x=0]

    \draw[->] [very thin] (-5,0,0) -- (5,0,0);
    \draw[->] [very thin] (0,-5,0) -- (0,13,0);
    \draw[->] [very thin] (0,0,-5) -- (0,0,7);

    \node[fill=black,circle,inner sep=1pt,opacity=0.5] at (0,0,0) {};
    \node[fill=black,circle,inner sep=1pt,opacity=0.5] at (0,0,2) {};
    \node[fill=black,circle,inner sep=1pt,opacity=0.5] at (0,0,-2) {};
    \node[fill=black,circle,inner sep=1pt,opacity=0.5] at (2,0,0) {};
    \node[fill=black,circle,inner sep=1pt,opacity=0.5] at (2,0,2) {};
    \node[fill=black,circle,inner sep=1pt,opacity=0.5] at (2,0,-2) {};
    \node[fill=black,circle,inner sep=1pt,opacity=0.5] at (-2,0,0) {};
    \node[fill=black,circle,inner sep=1pt,opacity=0.5] at (-2,0,2) {};
    \node[fill=black,circle,inner sep=1pt,opacity=0.5] at (-2,0,-2) {};
    \node[fill=black,circle,inner sep=1pt,opacity=0.5] at (2,0,4) {};
    \node[fill=black,circle,inner sep=1pt,opacity=0.5] at (0,5,0) {};
    \node[fill=black,circle,inner sep=1pt,opacity=0.5] at (0,5,2) {};
    \node[fill=black,circle,inner sep=1pt,opacity=0.5] at (0,5,-2) {};
    \node[fill=black,circle,inner sep=1pt,opacity=0.5] at (2,5,0) {};
    \node[fill=black,circle,inner sep=1pt,opacity=0.5] at (2,5,2) {};
    \node[fill=black,circle,inner sep=1pt,opacity=0.5] at (2,5,-2) {};
    \node[fill=black,circle,inner sep=1pt,opacity=0.5] at (-2,5,0) {};
    \node[fill=black,circle,inner sep=1pt,opacity=0.5] at (-2,5,2) {};
    \node[fill=black,circle,inner sep=1pt,opacity=0.5] at (-2,5,-2) {};
    \node[fill=black,circle,inner sep=1pt,opacity=0.5] at (0,10,0) {};
    \node[fill=black,circle,inner sep=1pt,opacity=0.5] at (0,10,2) {};
    \node[fill=black,circle,inner sep=1pt,opacity=0.5] at (0,10,-2) {};
    \node[fill=black,circle,inner sep=1pt,opacity=0.5] at (2,10,0) {};
    \node[fill=black,circle,inner sep=1pt,opacity=0.5] at (2,10,2) {};
    \node[fill=black,circle,inner sep=1pt,opacity=0.5] at (2,10,-2) {};
    \node[fill=black,circle,inner sep=1pt,opacity=0.5] at (-2,10,0) {};
    \node[fill=black,circle,inner sep=1pt,opacity=0.5] at (-2,10,2) {};
    \node[fill=black,circle,inner sep=1pt,opacity=0.5] at (-2,10,-2) {};    
    
    \coordinate (z_1) at (2,10,-2);
    \coordinate (z_3) at (4,10,0);
    \coordinate (z_2) at (-2,0,4);
    \coordinate (z_4) at (0,0,2);
    
    \coordinate (p_pl_1) at ($ (z_1)!1/2!(z_3) $);
    \coordinate (p_mi_1) at ($ (z_2)!1/2!(z_4) $);
    \coordinate (z_1_z_2_half) at ($ (z_1)!1/2!(z_2) $);
    \coordinate (z_2_z_3_half) at ($ (z_2)!1/2!(z_3) $);
    \coordinate (z_3_z_4_half) at ($ (z_3)!1/2!(z_4) $);
    \coordinate (z_4_z_1_half) at ($ (z_4)!1/2!(z_1) $); 
    
    \coordinate (p) at ($ (p_pl_1)!1/2!(p_mi_1) $);
    \coordinate (z_1_pl_1) at ($(p_mi_1) + (z_1) - (p_pl_1) $);
    \coordinate (z_3_pl_1) at ($(p_mi_1) + (z_3) - (p_pl_1) $);
    \coordinate (z_2_mi_1) at ($(p_pl_1) + (z_2) - (p_mi_1) $);
    \coordinate (z_4_mi_1) at ($(p_pl_1) + (z_4) - (p_mi_1) $);
    
    \coordinate (z_1_p) at ($ (z_1)!1/2!(z_1_pl_1) $);
    \coordinate (z_3_p) at ($(z_3)!1/2!(z_3_pl_1)$);
    \coordinate (z_2_p) at ($(z_2)!1/2!(z_2_mi_1)$);
    \coordinate (z_4_p) at ($(z_4)!1/2!(z_4_mi_1)$);
     
    \fill[blue!05,opacity=0.3] (3,0,5) -- (3,0,-3) -- (-3,0,-3) -- (-3,0,5) -- cycle;
    \fill[blue!05,opacity=0.3] (3,5,3) -- (3,5,-3) -- (-3,5,-3) -- (-3,5,3) -- cycle;
    \fill[blue!05,opacity=0.3] (3,10,3) -- (3,10,-3) -- (-3,10,-3) -- (-3,10,3) -- cycle;   

    \fill[blue!25,opacity=0.6] (z_1) -- (z_2_mi_1) -- (z_3) -- (z_4_mi_1) -- cycle;
     
    \fill[blue!25,opacity=0.6] (z_1_p) -- (z_2_p) -- (z_3_p) -- (z_4_p) -- cycle;
     
    \fill[blue!25,opacity=0.6] (z_1_pl_1) -- (z_2) -- (z_3_pl_1) -- (z_4) -- cycle;  
    
    \node[fill=blue,circle,inner sep=1pt] at (z_1) {};
    \draw (z_1) node[above] {$\vec z_1$};
    
    \node[fill=blue,circle,inner sep=1pt] at (z_3) {};
    \draw (z_3) node[right] {$\vec z_3$};
    
    \node[fill=blue,circle,inner sep=1pt] at (z_2) {};
    \draw (z_2) node[left] {$\vec z_2$};
    
     \node[fill=blue,circle,inner sep=1pt] at (z_4) {};
    \draw (z_4) node[right] {$\vec z_4$};
  
    \node[fill=blue,circle,inner sep=1pt] at (z_1_pl_1) {};
    \draw (z_1_pl_1) node[above] {$\vec z_1^{R}$};
    
    \node[fill=blue,circle,inner sep=1pt] at (z_3_pl_1) {};
    \draw (z_3_pl_1) node[right] {$\vec z_3^{R}$};
    
    \node[fill=blue,circle,inner sep=1pt] at (z_2_mi_1) {};
    \draw (z_2_mi_1) node[left] {$\vec z_2^{Q}$};
    
    \node[fill=blue,circle,inner sep=1pt] at (z_4_mi_1) {};
    \draw (z_4_mi_1) node[right] {$\vec z_4^{Q}$};
    
    \node[fill=blue,circle,inner sep=1pt] at (z_1_p) {};
    \draw (z_1_p) node[left] {$\vec z_1^{\pi}$};
    
    \node[fill=blue,circle,inner sep=1pt] at (z_3_p) {};
    \draw (z_3_p) node[below right] {$\vec z_3^{\pi}$};
    
    \node[fill=blue,circle,inner sep=1pt] at (z_2_p) {};
    \draw (z_2_p) node[below left] {$\vec z_2^{\pi}$};
    
    \node[fill=blue,circle,inner sep=1pt] at (z_4_p) {};
    \draw (z_4_p) node[above right] {$\vec z_4^{\pi}$};

    \node[fill=white,circle,inner sep=1pt,draw=blue] at (p) {};
    \draw (p) node[right] {$\vec p$};

    \node[fill=white,circle,inner sep=1pt,draw=blue] at (p_pl_1) {};
    \draw (p_pl_1) node[right] {$\vec{p}_{Q}$};
    
    \node[fill=white,circle,inner sep=1pt,draw=blue] at (p_mi_1) {};
    \draw (p_mi_1) node[above] {$\vec{p}_{R}$};
    
    \node[fill=white,circle,inner sep=1pt,draw=blue] at (z_1_z_2_half) {};
    \draw (z_1_z_2_half) node[left] {$\frac{\vec{z}_1^{\pi} + \vec{z}_2^{\pi}}{2}$};
    
    \node[fill=white,circle,inner sep=1pt,draw=blue] at (z_2_z_3_half) {};
    \draw (z_2_z_3_half) node[below] {$\frac{\vec{z}_2^{\pi} + \vec{z}_3^{\pi}}{2}$};
    
    \node[fill=white,circle,inner sep=1pt,draw=blue] at (z_3_z_4_half) {};
    \draw (z_3_z_4_half) node[right] {$\frac{\vec{z}_3^{\pi} + \vec{z}_4^{\pi}}{2}$};
    
    \node[fill=white,circle,inner sep=1pt,draw=blue] at (z_4_z_1_half) {};
    \draw (z_4_z_1_half) node[above] {$\frac{\vec{z}_4^{\pi} + \vec{z}_1^{\pi}}{2}$};
    
    \draw[->] [red,very thin] (z_1) -- (z_3);
    \draw[->] [red,very thin] (z_1) -- (z_2);
    \draw[->] [red,very thin] (z_1) -- (z_1_p);

    \draw (5, 0, 0) node[right] {$x$};
    \draw (0, 13, 0) node[left] {$z$};
    \draw (0, 0, 7) node[left] {$y$};

    \end{scope}
  \end{tikzpicture}
  \caption{Расположение точек внутри гиперплоскости $S_{1}$ из случая (7) леммы \ref{main_lem}}
\end{figure}

\textbf{Б.} Предположим, что $\pi$ не является рациональной плоскостью. Покажем, что этот случай разбивается на 3 принципиально разных случая.

\textbf{Б.1.} (будет соответствовать утверждению (5)) Предположим, что  $\vec{p}^{Q} \in \Z^{4}$, а значит и $\vec{p}^{R} \in \Z^{4}$. Тогда 
\[(\Delta^{\pi}_{k} \cup \Delta^{Q}_{k} \cup \Delta^{R}_{k}) \cap \Z^{4} = \{\vec{z}_{1}, \, \vec{z}^{Q}_{2}, \, \vec{z}_{3}, \, \vec{z}^{Q}_{4}, \, \vec{z}^{R}_{1}, \, \vec{z}_{2}, \, \vec{z}^{R}_{3}, \, \vec{z}_{4}, \, \vec{p}^{Q}, \, \vec{p}^{R}\},\]
и набор векторов $\vec{z}_{1}$, $\vec{z}_{2}$, $\vec{p}^{Q} = \frac{1}{2}(\vec{z}_{1}+\vec{z}_{3})$, $\vec{p}^{R} = \frac{1}{2}(\vec{z}_{2}+\vec{z}_{4})$ образует базис решетки $\Z^{4}$, а значит выполняется утверждение (5).

  \begin{figure}[h]
  \centering
  \begin{tikzpicture}[x=10mm, y=7mm, z=-5mm,scale=0.7]
    \begin{scope}[rotate around x=0] 

    \draw[->] [very thin] (-5,0,0) -- (5,0,0);
    \draw[->] [very thin] (0,-4,0) -- (0,7,0);
    \draw[->] [very thin] (0,0,-5) -- (0,0,6);    
    
    \node[fill=black,circle,inner sep=1pt,opacity=0.5] at (0,0,0) {};
    \node[fill=black,circle,inner sep=1pt,opacity=0.5] at (0,0,2) {};
    \node[fill=black,circle,inner sep=1pt,opacity=0.5] at (0,0,-2) {};
    \node[fill=black,circle,inner sep=1pt,opacity=0.5] at (2,0,0) {};
    \node[fill=black,circle,inner sep=1pt,opacity=0.5] at (2,0,2) {};
    \node[fill=black,circle,inner sep=1pt,opacity=0.5] at (2,0,-2) {};
    \node[fill=black,circle,inner sep=1pt,opacity=0.5] at (-2,0,0) {};
    \node[fill=black,circle,inner sep=1pt,opacity=0.5] at (-2,0,2) {};
    \node[fill=black,circle,inner sep=1pt,opacity=0.5] at (-2,0,-2) {};
    \node[fill=black,circle,inner sep=1pt,opacity=0.5] at (2,0,4) {};  
    \node[fill=black,circle,inner sep=1pt,opacity=0.5] at (0,5,0) {};
    \node[fill=black,circle,inner sep=1pt,opacity=0.5] at (0,5,2) {};
    \node[fill=black,circle,inner sep=1pt,opacity=0.5] at (0,5,-2) {};
    \node[fill=black,circle,inner sep=1pt,opacity=0.5] at (2,5,0) {};
    \node[fill=black,circle,inner sep=1pt,opacity=0.5] at (2,5,2) {};
    \node[fill=black,circle,inner sep=1pt,opacity=0.5] at (2,5,-2) {};
    \node[fill=black,circle,inner sep=1pt,opacity=0.5] at (-2,5,0) {};
    \node[fill=black,circle,inner sep=1pt,opacity=0.5] at (-2,5,2) {};
    \node[fill=black,circle,inner sep=1pt,opacity=0.5] at (-2,5,-2) {};
    \node[fill=black,circle,inner sep=1pt,opacity=0.5] at (4,5,2) {};
    \node[fill=black,circle,inner sep=1pt,opacity=0.5] at (4,5,0) {};
    \node[fill=black,circle,inner sep=1pt,opacity=0.5] at (4,5,-2) {};
     
    \coordinate (z_1) at (0,0,0);
    \coordinate (z_3) at (0,0,4);
    \coordinate (z_2) at (0,5,0);
    \coordinate (z_4) at (4,5,0);
    
    \coordinate (p_pl_1) at ($ (z_1)!1/2!(z_3) $);
    \coordinate (p_mi_1) at ($ (z_2)!1/2!(z_4) $);
    \coordinate (z_1_z_2_half) at ($ (z_1)!1/2!(z_2) $);
    \coordinate (z_2_z_3_half) at ($ (z_2)!1/2!(z_3) $);
    \coordinate (z_3_z_4_half) at ($ (z_3)!1/2!(z_4) $);
    \coordinate (z_4_z_1_half) at ($ (z_4)!1/2!(z_1) $); 
    
    \coordinate (p) at ($ (p_pl_1)!1/2!(p_mi_1) $);
    \coordinate (z_1_pl_1) at ($(p_mi_1) + (z_1) - (p_pl_1) $);
    \coordinate (z_3_pl_1) at ($(p_mi_1) + (z_3) - (p_pl_1) $);
    \coordinate (z_2_mi_1) at ($(p_pl_1) + (z_2) - (p_mi_1) $);
    \coordinate (z_4_mi_1) at ($(p_pl_1) + (z_4) - (p_mi_1) $);
    
    \coordinate (z_1_p) at ($ (z_1)!1/2!(z_1_pl_1) $);
    \coordinate (z_3_p) at ($(z_3)!1/2!(z_3_pl_1)$);
    \coordinate (z_2_p) at ($(z_2)!1/2!(z_2_mi_1)$);
    \coordinate (z_4_p) at ($(z_4)!1/2!(z_4_mi_1)$);  
    
    \fill[blue!05,opacity=0.3] (3,0,5) -- (3,0,-3) -- (-3,0,-3) -- (-3,0,5) -- cycle;
    \fill[blue!05,opacity=0.3] (5,5,3) -- (5,5,-3) -- (-3,5,-3) -- (-3,5,3) -- cycle;

    \fill[blue!25,opacity=0.3] (z_1) -- (z_2_mi_1) -- (z_3) -- (z_4_mi_1) -- cycle;

    \fill[blue!25,opacity=0.9] (z_1_pl_1) -- (z_2) -- (z_3_pl_1) -- (z_4) -- cycle;
         
    \node[fill=blue,circle,inner sep=1pt] at (z_1) {};
    \draw (z_1) node[above left] {$\vec z_1$};
    
    \node[fill=blue,circle,inner sep=1pt] at (z_3) {};
    \draw (z_3) node[right] {$\vec z_3$};
    
    \node[fill=blue,circle,inner sep=1pt] at (z_2) {};
    \draw (z_2) node[left] {$\vec z_2$};
    
     \node[fill=blue,circle,inner sep=1pt] at (z_4) {};
    \draw (z_4) node[right] {$\vec z_4$};
  
    \node[fill=blue,circle,inner sep=1pt] at (z_1_pl_1) {};
    \draw (z_1_pl_1) node[above] {$\vec z_1^{R}$};
    
    \node[fill=blue,circle,inner sep=1pt] at (z_3_pl_1) {};
    \draw (z_3_pl_1) node[left] {$\vec z_3^{R}$};
    
    \node[fill=blue,circle,inner sep=1pt] at (z_2_mi_1) {};
    \draw (z_2_mi_1) node[left] {$\vec z_2^{Q}$};
    
    \node[fill=blue,circle,inner sep=1pt] at (z_4_mi_1) {};
    \draw (z_4_mi_1) node[right] {$\vec z_4^{Q}$};

    \node[fill=white,circle,inner sep=1pt,draw=blue] at (p) {};
    \draw (p) node[left] {$\vec p$};

    \node[fill=blue,circle,inner sep=1pt,draw=blue] at (p_pl_1) {};
    \draw (p_pl_1) node[below right] {$\vec{p}_{Q}$};
    
    \node[fill=blue,circle,inner sep=1pt,draw=blue] at (p_mi_1) {};
    \draw (p_mi_1) node[above] {$\vec{p}_{R}$};
    
    \draw[->] [red,very thin] (z_1) -- (z_2);
    \draw[->] [red,very thin] (z_1) -- (p_pl_1);
    \draw[->] [red,very thin] (z_1) -- (p_mi_1);

    \draw (5, 0, 0) node[right] {$x$};
    \draw (0, 7, 0) node[left] {$z$};
    \draw (0, 0, 6) node[left] {$y$};

    \end{scope}
  \end{tikzpicture}
  \caption{Расположение точек внутри гиперплоскости $S_{1}$ из случая (5) леммы \ref{main_lem}}
\end{figure}

\textbf{Б.2.} Предположим, что $\vec{p}^{Q} \notin \Z^{4}$, а значит и $\vec{p}^{R} \notin \Z^{4}$. Это предположение дает нам два последних случая.

\textbf{Б.2.1.} (будет соответствовать утверждению (6)) Предположим, что $\vec{z}^{Q}_{2} \in \Z^{4}$, $\vec{z}^{Q}_{4} \in \Z^{4}$, $\vec{z}^{R}_{1} \in \Z^{4}$ и $\vec{z}^{R}_{2} \in \Z^{4}$. Тогда 
\[(\Delta^{\pi}_{k} \cup \Delta^{Q}_{k} \cup \Delta^{R}_{k}) \cap \Z^{4} = \{\vec{z}_{1}, \, \vec{z}^{Q}_{2}, \, \vec{z}_{3}, \, \vec{z}^{Q}_{4}, \, \vec{z}^{R}_{1}, \, \vec{z}_{2}, \, \vec{z}^{R}_{3}, \, \vec{z}_{4}\},\]
и набор векторов $\vec{z}_{1}$, $\vec{z}_{2}$, $\vec{z}_{3}$, $\vec{z}^{Q}_{4} = \frac{1}{2}(\vec{z}_{1} + \vec{z}_{3} + \vec{z}_{4} - \vec{z}_{2})$ образует базис решетки $\Z^{4}$, а значит выполняется утверждение (6).

 \begin{figure}[h]
  \centering
  \begin{tikzpicture}[x=10mm, y=7mm, z=-5mm,scale=0.7]
    \begin{scope}[rotate around x=0]
  
    \draw[->] [very thin] (-5,0,0) -- (5,0,0);
    \draw[->] [very thin] (0,-3,0) -- (0,7,0);
    \draw[->] [very thin] (0,0,-4) -- (0,0,4);   
    
    \node[fill=black,circle,inner sep=1pt,opacity=0.5] at (0,0,0) {};
    \node[fill=black,circle,inner sep=1pt,opacity=0.5] at (0,0,2) {};
    \node[fill=black,circle,inner sep=1pt,opacity=0.5] at (0,0,-2) {};
    \node[fill=black,circle,inner sep=1pt,opacity=0.5] at (2,0,0) {};
    \node[fill=black,circle,inner sep=1pt,opacity=0.5] at (2,0,2) {};
    \node[fill=black,circle,inner sep=1pt,opacity=0.5] at (2,0,-2) {};
    \node[fill=black,circle,inner sep=1pt,opacity=0.5] at (-2,0,0) {};
    \node[fill=black,circle,inner sep=1pt,opacity=0.5] at (-2,0,2) {};
    \node[fill=black,circle,inner sep=1pt,opacity=0.5] at (-2,0,-2) {};
    \node[fill=black,circle,inner sep=1pt,opacity=0.5] at (0,5,0) {};
    \node[fill=black,circle,inner sep=1pt,opacity=0.5] at (0,5,2) {};
    \node[fill=black,circle,inner sep=1pt,opacity=0.5] at (0,5,-2) {};
    \node[fill=black,circle,inner sep=1pt,opacity=0.5] at (2,5,0) {};
    \node[fill=black,circle,inner sep=1pt,opacity=0.5] at (2,5,2) {};
    \node[fill=black,circle,inner sep=1pt,opacity=0.5] at (2,5,-2) {};
    \node[fill=black,circle,inner sep=1pt,opacity=0.5] at (-2,5,0) {};
    \node[fill=black,circle,inner sep=1pt,opacity=0.5] at (-2,5,2) {};
    \node[fill=black,circle,inner sep=1pt,opacity=0.5] at (-2,5,-2) {};
    
    \coordinate (z_1) at (2,5,0);
    \coordinate (z_3) at (0,5,-2);
    \coordinate (z_2) at (0,0,0);
    \coordinate (z_4) at (-2,0,2);
    
    \coordinate (p_pl_1) at ($ (z_1)!1/2!(z_3) $);
    \coordinate (p_mi_1) at ($ (z_2)!1/2!(z_4) $);
    \coordinate (z_1_z_2_half) at ($ (z_1)!1/2!(z_2) $);
    \coordinate (z_2_z_3_half) at ($ (z_2)!1/2!(z_3) $);
    \coordinate (z_3_z_4_half) at ($ (z_3)!1/2!(z_4) $);
    \coordinate (z_4_z_1_half) at ($ (z_4)!1/2!(z_1) $); 
    
    \coordinate (p) at ($ (p_pl_1)!1/2!(p_mi_1) $);
    \coordinate (z_1_pl_1) at ($(p_mi_1) + (z_1) - (p_pl_1) $);
    \coordinate (z_3_pl_1) at ($(p_mi_1) + (z_3) - (p_pl_1) $);
    \coordinate (z_2_mi_1) at ($(p_pl_1) + (z_2) - (p_mi_1) $);
    \coordinate (z_4_mi_1) at ($(p_pl_1) + (z_4) - (p_mi_1) $);
    
    \coordinate (z_1_p) at ($ (z_1)!1/2!(z_1_pl_1) $);
    \coordinate (z_3_p) at ($(z_3)!1/2!(z_3_pl_1)$);
    \coordinate (z_2_p) at ($(z_2)!1/2!(z_2_mi_1)$);
    \coordinate (z_4_p) at ($(z_4)!1/2!(z_4_mi_1)$);   
    
    \fill[blue!05,opacity=0.3] (3,0,3) -- (3,0,-3) -- (-3,0,-3) -- (-3,0,3) -- cycle;
    \fill[blue!05,opacity=0.3] (3,5,3) -- (3,5,-3) -- (-3,5,-3) -- (-3,5,3) -- cycle;   

    \fill[blue!25,opacity=0.6] (z_1) -- (z_2_mi_1) -- (z_3) -- (z_4_mi_1) -- cycle;
     
    \fill[blue!25,opacity=0.6] (z_1_pl_1) -- (z_2) -- (z_3_pl_1) -- (z_4) -- cycle;
      
    \node[fill=blue,circle,inner sep=1pt] at (z_1) {};
    \draw (z_1) node[below right] {$\vec z_1$};
    
    \node[fill=blue,circle,inner sep=1pt] at (z_3) {};
    \draw (z_3) node[above] {$\vec z_3$};
    
    \node[fill=blue,circle,inner sep=1pt] at (z_2) {};
    \draw (z_2) node[below right] {$\vec z_2$};
    
     \node[fill=blue,circle,inner sep=1pt] at (z_4) {};
    \draw (z_4) node[left] {$\vec z_4$};
  
    \node[fill=blue,circle,inner sep=1pt] at (z_1_pl_1) {};
    \draw (z_1_pl_1) node[right] {$\vec z_1^{R}$};
    
    \node[fill=blue,circle,inner sep=1pt] at (z_3_pl_1) {};
    \draw (z_3_pl_1) node[above right] {$\vec z_3^{R}$};
    
    \node[fill=blue,circle,inner sep=1pt] at (z_2_mi_1) {};
    \draw (z_2_mi_1) node[above right] {$\vec z_2^{Q}$};
    
    \node[fill=blue,circle,inner sep=1pt] at (z_4_mi_1) {};
    \draw (z_4_mi_1) node[left] {$\vec z_4^{Q}$};

    \node[fill=white,circle,inner sep=1pt,draw=blue] at (p) {};
    \draw (p) node[right] {$\vec p$};
  
    \node[fill=white,circle,inner sep=1pt,draw=blue] at (p_pl_1) {};
    \draw (p_pl_1) node[above right] {$\vec{p}_{Q}$};
    
    \node[fill=white,circle,inner sep=1pt,draw=blue] at (p_mi_1) {};
    \draw (p_mi_1) node[right] {$\vec{p}_{R}$};
  
    \draw[->] [red,very thin] (z_1) -- (z_2);
    \draw[->] [red,very thin] (z_1) -- (z_3);
    \draw[->] [red,very thin] (z_1) -- (z_2_mi_1);

    \draw (5, 0, 0) node[right] {$x$};
    \draw (0, 7, 0) node[left] {$z$};
    \draw (0, 0, 4) node[left] {$y$};

    \end{scope}
  \end{tikzpicture}
  \caption{Расположение точек внутри гиперплоскости $S_{1}$ из случая (6) леммы \ref{main_lem}}
\end{figure}

\textbf{Б.2.2.} (будет соответствовать утверждению (2)) Предположим, что $\vec{z}^{Q}_{2} \notin \Z^{4}$, $\vec{z}^{Q}_{4} \notin \Z^{4}$, $\vec{z}^{R}_{1} \notin \Z^{4}$ и $\vec{z}^{R}_{2} \notin \Z^{4}$. 
Тогда 
\[(\Delta^{\pi}_{k} \cup \Delta^{Q}_{k} \cup \Delta^{R}_{k}) \cap \Z^{4} = \{\vec{z}_{1}, \, \vec{z}_{3}, \, \vec{z}_{2}, \, \vec{z}_{4}\},\]
и набор векторов $\vec{z}_{1}$, $\vec{z}_{2}$, $\vec{z}_{3}$, $\vec{z}_{4}$ образует базис решетки $\Z^{4}$, а значит выполняется утверждение (2), что завершает доказательство леммы

 \begin{figure}[h]
  \centering
  \begin{tikzpicture}[x=10mm, y=7mm, z=-5mm,scale=0.7]
    \begin{scope}[rotate around x=0]

    \draw[->] [very thin] (-5,0,0) -- (5,0,0);
    \draw[->] [very thin] (0,-3,0) -- (0,7,0);
    \draw[->] [very thin] (0,0,-4) -- (0,0,4);   
    
    \node[fill=black,circle,inner sep=1pt,opacity=0.5] at (0,0,0) {};
    \node[fill=black,circle,inner sep=1pt,opacity=0.5] at (0,0,2) {};
    \node[fill=black,circle,inner sep=1pt,opacity=0.5] at (0,0,-2) {};
    \node[fill=black,circle,inner sep=1pt,opacity=0.5] at (2,0,0) {};
    \node[fill=black,circle,inner sep=1pt,opacity=0.5] at (2,0,2) {};
    \node[fill=black,circle,inner sep=1pt,opacity=0.5] at (2,0,-2) {};
    \node[fill=black,circle,inner sep=1pt,opacity=0.5] at (-2,0,0) {};
    \node[fill=black,circle,inner sep=1pt,opacity=0.5] at (-2,0,2) {};
    \node[fill=black,circle,inner sep=1pt,opacity=0.5] at (-2,0,-2) {};
    \node[fill=black,circle,inner sep=1pt,opacity=0.5] at (0,5,0) {};
    \node[fill=black,circle,inner sep=1pt,opacity=0.5] at (0,5,2) {};
    \node[fill=black,circle,inner sep=1pt,opacity=0.5] at (0,5,-2) {};
    \node[fill=black,circle,inner sep=1pt,opacity=0.5] at (2,5,0) {};
    \node[fill=black,circle,inner sep=1pt,opacity=0.5] at (2,5,2) {};
    \node[fill=black,circle,inner sep=1pt,opacity=0.5] at (2,5,-2) {};
    \node[fill=black,circle,inner sep=1pt,opacity=0.5] at (-2,5,0) {};
    \node[fill=black,circle,inner sep=1pt,opacity=0.5] at (-2,5,2) {};
    \node[fill=black,circle,inner sep=1pt,opacity=0.5] at (-2,5,-2) {};
    
    \coordinate (z_1) at (0,0,0);
    \coordinate (z_3) at (0,0,2);
    \coordinate (z_2) at (0,5,0);
    \coordinate (z_4) at (2,0,0);
    
    \coordinate (p_pl_1) at ($ (z_1)!1/2!(z_3) $);
    \coordinate (p_mi_1) at ($ (z_2)!1/2!(z_4) $);
    \coordinate (z_1_z_2_half) at ($ (z_1)!1/2!(z_2) $);
    \coordinate (z_2_z_3_half) at ($ (z_2)!1/2!(z_3) $);
    \coordinate (z_3_z_4_half) at ($ (z_3)!1/2!(z_4) $);
    \coordinate (z_4_z_1_half) at ($ (z_4)!1/2!(z_1) $); 
    
    \coordinate (p) at ($ (p_pl_1)!1/2!(p_mi_1) $);
    \coordinate (z_1_pl_1) at ($(p_mi_1) + (z_1) - (p_pl_1) $);
    \coordinate (z_3_pl_1) at ($(p_mi_1) + (z_3) - (p_pl_1) $);
    \coordinate (z_2_mi_1) at ($(p_pl_1) + (z_2) - (p_mi_1) $);
    \coordinate (z_4_mi_1) at ($(p_pl_1) + (z_4) - (p_mi_1) $);
    
    \coordinate (z_1_p) at ($ (z_1)!1/2!(z_1_pl_1) $);
    \coordinate (z_3_p) at ($(z_3)!1/2!(z_3_pl_1)$);
    \coordinate (z_2_p) at ($(z_2)!1/2!(z_2_mi_1)$);
    \coordinate (z_4_p) at ($(z_4)!1/2!(z_4_mi_1)$);

    \fill[blue!05,opacity=0.3] (3,0,3) -- (3,0,-3) -- (-3,0,-3) -- (-3,0,3) -- cycle;
    \fill[blue!05,opacity=0.3] (3,5,3) -- (3,5,-3) -- (-3,5,-3) -- (-3,5,3) -- cycle;
    
    \fill[blue!25,opacity=0.6] (z_1) -- (z_2_mi_1) -- (z_3) -- (z_4_mi_1) -- cycle;
     
    \fill[blue!25,opacity=0.6] (z_1_pl_1) -- (z_2) -- (z_3_pl_1) -- (z_4) -- cycle;

    \node[fill=blue,circle,inner sep=1pt] at (z_1) {};
    \draw (z_1) node[below right] {$\vec z_1$};
    
    \node[fill=blue,circle,inner sep=1pt] at (z_3) {};
    \draw (z_3) node[left] {$\vec z_3$};
    
    \node[fill=blue,circle,inner sep=1pt] at (z_2) {};
    \draw (z_2) node[left] {$\vec z_2$};
    
     \node[fill=blue,circle,inner sep=1pt] at (z_4) {};
    \draw (z_4) node[above right] {$\vec z_4$};
  
    \node[fill=white,circle,inner sep=1pt,draw=blue] at (z_1_pl_1) {};
    \draw (z_1_pl_1) node[above right] {$\vec z_1^{R}$};
    
    \node[fill=white,circle,inner sep=1pt,draw=blue] at (z_3_pl_1) {};
    \draw (z_3_pl_1) node[above] {$\vec z_3^{R}$};
    
    \node[fill=white,circle,inner sep=1pt,draw=blue] at (z_2_mi_1) {};
    \draw (z_2_mi_1) node[left] {$\vec z_2^{Q}$};
    
    \node[fill=white,circle,inner sep=1pt, draw=blue] at (z_4_mi_1) {};
    \draw (z_4_mi_1) node[left] {$\vec z_4^{Q}$};
   
    \node[fill=white,circle,inner sep=1pt, draw=blue] at (p) {};
    \draw (p) node[right] {$\vec p$};
    
    \node[fill=white,circle,inner sep=1pt,draw=blue] at (p_pl_1) {};
    \draw (p_pl_1) node[above] {$\vec{p}_{Q}$};
    
    \node[fill=white,circle,inner sep=1pt,draw=blue] at (p_mi_1) {};
    \draw (p_mi_1) node[right] {$\vec{p}_{R}$};
    
    \draw[->] [red,very thin] (z_1) -- (z_2);
    \draw[->] [red,very thin] (z_1) -- (z_3);
    \draw[->] [red,very thin] (z_1) -- (z_4);

    \draw (5, 0, 0) node[right] {$x$};
    \draw (0, 7, 0) node[left] {$z$};
    \draw (0, 0, 4) node[left] {$y$};

    \end{scope}
  \end{tikzpicture}
  \caption{Расположение точек внутри гиперплоскости $S_{1}$ из случая (2) леммы \ref{main_lem}}
\end{figure}
\end{proof}

 Если задана дробь $\cf(l_1,l_2,l_3,l_4)=\cf(A)\in\gA_3$, будем считать, что подпространство $l_1$ порождается вектором $\vec l_1=(1,\alpha,\beta,\gamma)$. Тогда из предложения \ref{prop:more_than_pelle_n_dim} следует, что числа $1,\alpha,\beta, \gamma$ образуют базис поля $K=\Q(\alpha,\beta, \gamma)$ над $\Q$ и каждое $l_i$ порождается вектором $\vec l_i=(1,\sigma_i(\alpha),\sigma_i(\beta), \sigma_i(\gamma))$, где $\sigma_1(=\id),\sigma_2,\sigma_3, \sigma_4$ --- все вложения $K$ в $\R$. То есть, если через $\big(\vec l_1,\vec l_2,\vec l_3,\vec l_4\big)$ обозначить матрицу со столбцами $\vec l_1,\vec l_2,\vec l_3,\vec l_4$, получим
 \[
  \big(\vec l_1,\vec l_2,\vec l_3,\vec l_4\big)=
  \begin{pmatrix}
    1 & 1 & 1 & 1 \\
    \alpha & \sigma_2(\alpha) & \sigma_3(\alpha) & \sigma_4(\alpha) \\
    \beta & \sigma_2(\beta) & \sigma_3(\beta) & \sigma_4(\beta) \\
    \gamma & \sigma_2(\gamma) & \sigma_3(\gamma) & \sigma_4(\gamma)
  \end{pmatrix}.
\]

Напомним, что через $\gA_{3}'$ мы обозначали множество всех трехмерных алгебраических цепных дробей, для которых поле $K$ из предложения \ref{prop:more_than_pelle_n_dim} --- вполне вещественное циклическое расширение Галуа. Пусть $\sigma$ --- образующая группы Галуа $\gal(K/\Q)$. Также мы выбирали такую нумерацию прямых $l_1, l_2, l_3, l_4$, что если через $\big(\vec l_1,\vec l_2, \vec l_3, \vec l_4 \big)$ обозначить матрицу со столбцами $\vec l_1,\vec l_2, \vec l_3, \vec l_4$, то 
 \[
  \big(\vec l_1,\vec l_2, \vec l_3, \vec l_4 \big)=
  \begin{pmatrix}
     1 & 1 & 1 & 1 \\
    \alpha & \sigma(\alpha) & \sigma^{2}(\alpha) & \sigma^{3}(\alpha) \\
    \beta & \sigma(\beta) & \sigma^{2}(\beta) & \sigma^{3}(\beta) \\
    \gamma & \sigma(\gamma) & \sigma^{2}(\gamma) & \sigma^{3}(\gamma)
  \end{pmatrix}.
\]

Определим следующие классы трехмерных алгебраических цепных дробей:

\begin{align*}\begin{split}
 \mathbf{CF}_1  = \Big\{ \cf(l_1,l_2,l_3,l_4)\in\gA_3' \,\Big|\, \beta = \sigma(\alpha),\ \gamma = \sigma^{2}(\alpha), \ \trace(\alpha)=0 \Big\}, 
\end{split}\end{align*}
\begin{align*}\begin{split}
 \mathbf{CF}_2  = \Big\{ \cf(l_1,l_2,l_3,l_4)\in\gA_3' \,\Big|\, \beta = \sigma(\alpha) ,\ \gamma = \sigma^{2}(\alpha), \ \trace(\alpha)=1 \Big\},
\end{split}\end{align*}
\begin{align*}\begin{split}
 \mathbf{CF}_3  = \Big\{ \cf(l_1,l_2,l_3,l_4)\in\gA_3' \,\Big|\, \beta = \sigma(\alpha) ,\ \gamma = \sigma^{2}(\alpha), \ \trace(\alpha)=2 \Big\}, 
\end{split}\end{align*}
\begin{align*}\begin{split}
 \mathbf{CF}_4  = \Big\{ \cf(l_1,l_2,l_3,l_4)\in\gA_3' \,\Big|\, \beta = \sigma(\alpha),\ \gamma = \frac{\alpha + \sigma^{2}(\alpha)}{2}, \ \trace(\alpha)=0 \Big\}, 
\end{split}\end{align*}
\begin{align*}\begin{split}
 \mathbf{CF}_5  = \Big\{ \cf(l_1,l_2,l_3,l_4)\in\gA_3' \,\Big|\, \beta = \sigma(\alpha) ,\ \gamma = \frac{\alpha + \sigma^{2}(\alpha)}{2}, \ \trace(\alpha)=2 \Big\}, 
\end{split}\end{align*}
\begin{align*}\begin{split}
 \mathbf{CF}_6  = \Big\{ \cf(l_1,l_2,l_3,l_4)\in\gA_3' \,\Big|\, \beta = \sigma(\alpha) ,\ \gamma = \frac{\alpha + \sigma^{2}(\alpha) + 1}{2}, \ \trace(\alpha)=0 \Big\}, 
\end{split}\end{align*}
\begin{align*}\begin{split}
 \mathbf{CF}_7  = \Big\{ \cf(l_1,l_2,l_3,l_4)\in\gA_3' \,\Big|\, \beta = \sigma(\alpha) ,\ \gamma = \frac{\alpha + \sigma^{2}(\alpha) + 1}{2}, \ \trace(\alpha)=2 \Big\}, 
\end{split}\end{align*}

Покажем, что все дроби из классов $\mathbf{CF}_{i}$, палиндромичны для каждого $i=1,\ldots, 7$. Положим

\[
  G_{1} =
  \begin{pmatrix}
    1 & \phantom{-}0 & \phantom{-}0 & \phantom{-}0 \\
    0 & \phantom{-}0 & \phantom{-}1 & \phantom{-}0\\
    0 & \phantom{-}0 & \phantom{-}0 & \phantom{-}1\\
    0 & -1 & -1 & -1
  \end{pmatrix},
  \quad
    G_{2} =
  \begin{pmatrix}
    1 & \phantom{-}0 &  \phantom{-}0 & \phantom{-}0 \\
    0 & \phantom{-}0 &  \phantom{-}1 & \phantom{-}0 \\
    0 &  \phantom{-}0 &  \phantom{-}0 & \phantom{-}1 \\
    1 & -1 & -1 & -1
  \end{pmatrix},
\]
\[
  G_{3} =
  \begin{pmatrix}
    1 & \phantom{-}0 &  \phantom{-}0 & \phantom{-}0 \\
    0 & \phantom{-}0 &  \phantom{-}1 & \phantom{-}0 \\
    0 &  \phantom{-}0 &  \phantom{-}0 & \phantom{-}1 \\
    2 & -1 & -1 & -1
  \end{pmatrix},
  \quad
    G_{4} =
  \begin{pmatrix}
    1 & \phantom{-}0 & \phantom{-}0 & \phantom{-}0 \\
    0 & \phantom{-}0 & \phantom{-}1 & \phantom{-}0 \\
    0 & -1 & \phantom{-}0 & \phantom{-}2 \\
    0 &\phantom{-}0  & \phantom{-}0 & -1
  \end{pmatrix},
\]
\[
  G_{5} =
  \begin{pmatrix}
    1 & \phantom{-}0 &  \phantom{-}0 & \phantom{-}0 \\
    0 & \phantom{-}0 &  \phantom{-}1 & \phantom{-}0 \\
    0 &  -1 &  \phantom{-}0 & \phantom{-}2 \\
    1 & \phantom{-}0 & \phantom{-}0 & -1
  \end{pmatrix},
  \quad
  G_{6} =
  \begin{pmatrix}
     \phantom{-}1 & \phantom{-}0 &  \phantom{-}0 & \phantom{-}0 \\
     \phantom{-}0 & \phantom{-}0 & \phantom{-}1 & \phantom{-}0 \\
    -1 & -1 & \phantom{-}0 & \phantom{-}2 \\
     \phantom{-}1 & \phantom{-}0 & \phantom{-}0 & -1
  \end{pmatrix},
\]
\[
  G_{7} =
  \begin{pmatrix}
     \phantom{-}1 & \phantom{-}0 &  \phantom{-}0 & \phantom{-}0 \\
     \phantom{-}0 & \phantom{-}0 & \phantom{-}1 & \phantom{-}0 \\
    -1 & -1 & \phantom{-}0 & \phantom{-}2 \\
     \phantom{-}2 & \phantom{-}0 & \phantom{-}0 & -1
  \end{pmatrix}.
\]

\begin{lemma}\label{oper_eq_4d}
  Пусть $\cf(l_1,l_2,l_3,l_4)\in\gA_3$ и $i\in\{1,2,3,4,5,6,7\}$. Тогда цепная дробь $\cf(l_1,l_2,l_3,l_4)$ принадлежит классу $\mathbf{CF}_{i}$ в том и только в том случае, если $G_{i}$ --- её собственная циклическая симметрия.
\end{lemma}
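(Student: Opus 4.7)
The plan is to verify the equivalence case-by-case, in the spirit of Lemma \ref{oper_eq_n}. By Corollary \ref{about_properity_n_4}, after a suitable renumbering of the subspaces, $G_i$ is a proper cyclic symmetry with $\sigma_{G_i}=(1,2,3,4)$ if and only if
\[
  G_i\vec l_j = \mu_{j+1}\vec l_{j+1}\qquad(j=1,2,3,4,\ \text{индексы по модулю }4)
\]
for some real numbers $\mu_1,\mu_2,\mu_3,\mu_4$ whose product equals $1$. A key simplification is that every matrix $G_i$ has first row $(1,0,0,0)$; since the first coordinate of each $\vec l_j$ equals $1$, this forces every $\mu_{j+1}$ to equal $1$, so the properness condition $\mu_1\mu_2\mu_3\mu_4=1$ is automatic as soon as $G_i$ acts cyclically on the lines.

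For the implication $\mathbf{CF}_i\Rightarrow$ symmetry, suppose $\cf(l_1,l_2,l_3,l_4)\in\mathbf{CF}_i$ and substitute the expressions for $\beta,\gamma$ in terms of $\alpha$ and $\sigma$, together with the trace condition, into the direct computation of $G_i\vec l_1$. In each of the seven cases the defining relation of $\mathbf{CF}_i$ is precisely the identity needed to make the fourth coordinate of $G_i\vec l_1$ equal to $\sigma(\gamma)$. For instance, in case $1$ the fourth coordinate is $-\alpha-\sigma(\alpha)-\sigma^2(\alpha)$, which equals $\sigma^3(\alpha)=\sigma(\gamma)$ exactly because $\trace(\alpha)=0$; in case $4$ the third and fourth coordinates yield $-\alpha+2\gamma=\sigma^2(\alpha)$ and $-\gamma=\sigma(\gamma)$, both of which follow from $\gamma=\tfrac12(\alpha+\sigma^2(\alpha))$ together with $\trace(\alpha)=0$. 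Hence $G_i\vec l_1=\vec l_2$, and applying powers of $\sigma$ combined with $\sigma^4=\id$ gives $G_i\vec l_j=\vec l_{j+1}$ for every $j$.

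For the converse, assume $G_i$ is a proper cyclic symmetry; after renumbering take $\sigma_{G_i}=(1,2,3,4)$, so that $G_i\vec l_1=\vec l_2$ (the scalar being $1$ by the first-row argument above). Equate the remaining three coordinates of $G_i\vec l_1=\vec l_2$ and use that $1,\alpha,\beta,\gamma$ form a $\Q$-basis of $K$. The second coordinate gives $\sigma_2(\alpha)=\beta$; the third, depending on $i$, forces $\gamma=\sigma_2^2(\alpha)$ (cases $1$--$3$), $\gamma=\tfrac12(\alpha+\sigma_2^2(\alpha))$ (cases $4$--$5$), or $\gamma=\tfrac12(\alpha+\sigma_2^2(\alpha)+1)$ (cases $6$--$7$); and the fourth, after substituting the expression for $\gamma$, reduces to the trace condition $\trace(\alpha)\in\{0,1,2\}$ specified by $\mathbf{CF}_i$. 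Since $\sigma_2$ then sends the $\Q$-basis $\{1,\alpha,\beta,\gamma\}$ into $K$, it is a $\Q$-automorphism of $K$; because $G_i$ acts as a $4$-cycle the vectors $\vec l_1,\vec l_2,\vec l_3,\vec l_4$ are distinct, so $\sigma_2$ has order exactly $4$ and $K/\Q$ is a cyclic Galois extension with generator $\sigma=\sigma_2$. Thus $\cf(l_1,l_2,l_3,l_4)\in\mathbf{CF}_i$.

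The main obstacle is purely bookkeeping: the seven matrices $G_i$ differ only in their first and last columns, so each individual computation is short but easy to confuse with a neighbouring case. Once the action of $G_i$ on $\vec l_1$ is written out explicitly for each $i$ and matched against the corresponding defining relation of $\mathbf{CF}_i$, both directions of the equivalence follow essentially by inspection, and no further nontrivial input is required.
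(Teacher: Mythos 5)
Ваше доказательство верно и по существу совпадает с доказательством в статье: в обе стороны всё сводится к покоординатной проверке равенства $G_i\big(\vec l_1,\vec l_2,\vec l_3,\vec l_4\big)=\big(\vec l_2,\vec l_3,\vec l_4,\vec l_1\big)$ с использованием следствия \ref{about_properity_n_4}, причём ваше наблюдение о первой строке $(1,0,0,0)$, автоматически дающей $\mu_j=1$, --- удачное сокращение того, что в статье выводится в каждом случае отдельно. Единственное место, изложенное у вас чуть бегло, --- обоснование того, что $\sigma_2$ имеет порядок ровно $4$ (и, значит, $\trace(\alpha)=\alpha+\sigma_2(\alpha)+\sigma_2^2(\alpha)+\sigma_2^3(\alpha)$): для этого, как и в статье, стоит явно выписать равенства $G_i\vec l_2=\vec l_3$ и $G_i\vec l_3=\vec l_4$, дающие $\sigma_3=\sigma_2^2$ и $\sigma_4=\sigma_2^3$, откуда различность вложений $\id,\sigma_2,\sigma_2^2,\sigma_2^3$ следует немедленно.
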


\begin{proof}
  В силу следствия \ref{about_properity_n_4} оператор $G\in\Gl_4(\Z)$ является собственной циклической симметрией дроби $\cf(l_1,l_2,l_3,l_4)$ тогда и только тогда, когда с точностью до перестановки индексов существуют такие действительные числа $\mu_1,\mu_2,\mu_3,\mu_4$, что $G\big(\vec l_1,\vec l_2,\vec l_3,\vec l_4\big)=\big(\mu_2\vec l_2,\mu_3\vec l_3,\mu_4\vec l_4,\mu_1\vec l_1\big)$ и $\mu_{1}\mu_{2}\mu_{3}\mu_{4} = 1$.

  Пусть $\cf(l_1, l_2, l_3, l_4) \in \mathbf{CF}_{1}$. Тогда

\[    G_{1} \big(\vec{l}_1, \vec{l}_2, \vec{l}_3, \vec{l}_4\big) = \]
 \[  \left( \begin{smallmatrix}
      1\phantom{-}\phantom{-} & 1\phantom{-}\phantom{-} & 1\phantom{-}\phantom{-} & 1 \\
      \sigma(\alpha)\phantom{-}\phantom{-} &  \sigma^{2}(\alpha)\phantom{-}\phantom{-} &  \sigma^{3}(\alpha)\phantom{-}\phantom{-} & \alpha \\
      \sigma^{2}(\alpha)\phantom{-}\phantom{-} &  \sigma^{3}(\alpha)\phantom{-}\phantom{-} &  \alpha\phantom{-}\phantom{-} & \sigma(\alpha) \\
      -\alpha - \sigma(\alpha) - \sigma^{2}(\alpha)\phantom{-}\phantom{-} & -\sigma(\alpha) - \sigma^{2}(\alpha) - \sigma^{3}(\alpha)\phantom{-}\phantom{-} & -\sigma^{2}(\alpha) - \sigma^{3}(\alpha) - \alpha\phantom{-}\phantom{-} & -\sigma^{3}(\alpha) - \alpha - \sigma(\alpha) 
    \end{smallmatrix}\right) = \]
\[  \big(\vec{l}_2, \vec{l}_3, \vec{l}_4, \vec{l}_1\big). \]

  Следовательно, $G_{1}$ --- собственная циклическая симметрия $\cf(l_1,l_2,l_3,l_4)$. Обратно, предположим, $G_{1}$ собственная циклическая симметрия $\cf(l_1,l_2,l_3,l_4)$. Тогда существует такое $\mu_2$, что с точностью до перестановки индексов
  \[
    G_{1}\vec{l}_1=
    \begin{pmatrix}
      1  \\
      \beta \\
      \gamma \\
      - \alpha - \beta - \gamma
    \end{pmatrix}
    =\mu_2
    \begin{pmatrix}
      1  \\
      \sigma_{2}(\alpha) \\
      \sigma_{2}(\beta) \\
      \sigma_{2}(\gamma)
    \end{pmatrix}, 
  \]
  откуда $\mu_2 = 1$, $\beta =\sigma_{2}(\alpha)$, $\gamma =\sigma_{2}(\beta)$, $- \alpha - \beta - \gamma=\sigma_{2}(\gamma)$. Существует $\mu_3$, такое что
  \[ 
    G_{1}\vec{l}_2=
    \begin{pmatrix}
      1  \\
      \gamma \\
      - \alpha - \beta - \gamma \\
      \alpha
    \end{pmatrix}
    =\mu_3
    \begin{pmatrix}
      1  \\
      \sigma_{3}(\alpha) \\
      \sigma_{3}(\beta) \\
      \sigma_{3}(\gamma)
    \end{pmatrix},
  \]
   откуда $\mu_3 = 1$, $\sigma_{3}(\alpha) = \gamma = \sigma_{2}(\beta) = \sigma^{2}_{2}(\alpha)$, $\sigma_{3}(\beta) = - \alpha - \beta - \gamma = \sigma_{2}(\gamma) = \sigma^{2}_{2}(\beta)$, $\sigma_{3}(\gamma) = \alpha = -\beta - \gamma - (- \alpha - \beta - \gamma) = \sigma_{2}(- \alpha - \beta - \gamma) = \sigma^{2}_{2}(\gamma)$.
   Существует $\mu_4$, такое что
  \[ 
    G_{1}\vec{l}_3=
    \begin{pmatrix}
      1  \\
      - \alpha - \beta - \gamma  \\
      \alpha \\
      \beta
    \end{pmatrix}
    =\mu_4
    \begin{pmatrix}
      1  \\
      \sigma_{4}(\alpha) \\
      \sigma_{4}(\beta) \\
      \sigma_{4}(\gamma)
    \end{pmatrix},
  \]
   откуда $\mu_4 = 1$, $\sigma_{4}(\alpha) = - \alpha - \beta - \gamma = \sigma_{2}(\gamma) =  \sigma^{3}_{2}(\alpha)$, $\sigma_{4}(\beta) = \alpha = -\beta - \gamma - (- \alpha - \beta - \gamma) = \sigma_{2}(-\alpha - \beta - \gamma) =  \sigma^{3}_{2}(\beta)$, $\sigma_{4}(\gamma) = \beta  = \sigma_{2}(\alpha) = \sigma^{3}_{2}(\gamma)$, $\trace(\alpha) = 0$. Стало быть, $\cf(l_1, l_2, l_3, l_4) \in \mathbf{CF}_{1}$, так как числа $1,\alpha,\beta, \gamma$ образуют базис поля $K=\Q(\alpha,\beta, \gamma)$.

Пусть $\cf(l_1, l_2, l_3, l_4) \in \mathbf{CF}_{2}$. Тогда

\[    G_{2} \big(\vec{l}_1, \vec{l}_2, \vec{l}_3, \vec{l}_4\big) = \]
 \[  \left( \begin{smallmatrix}
      1\phantom{-}\phantom{-} & 1\phantom{-}\phantom{-} & 1\phantom{-}\phantom{-} & 1 \\
      \sigma(\alpha)\phantom{-}\phantom{-} &  \sigma^{2}(\alpha)\phantom{-}\phantom{-} &  \sigma^{3}(\alpha)\phantom{-}\phantom{-} & \alpha \\
      \sigma^{2}(\alpha)\phantom{-}\phantom{-} &  \sigma^{3}(\alpha)\phantom{-}\phantom{-} &  \alpha\phantom{-}\phantom{-} & \sigma(\alpha) \\
      1 -\alpha - \sigma(\alpha) - \sigma^{2}(\alpha)\phantom{-}\phantom{-} & 1 -\sigma(\alpha) - \sigma^{2}(\alpha) - \sigma^{3}(\alpha)\phantom{-}\phantom{-} &1 -\sigma^{2}(\alpha) - \sigma^{3}(\alpha) - \alpha\phantom{-}\phantom{-} &1 -\sigma^{3}(\alpha) - \alpha - \sigma(\alpha) 
    \end{smallmatrix}\right) = \]
\[  \big(\vec{l}_2, \vec{l}_3, \vec{l}_4, \vec{l}_1\big). \]

   Следовательно, $G_{2}$ --- собственная циклическая симметрия $\cf(l_1,l_2,l_3,l_4)$. Обратно, предположим, $G_{2}$ собственная циклическая симметрия $\cf(l_1,l_2,l_3,l_4)$. Тогда существует такое $\mu_2$, что с точностью до перестановки индексов 
   \[
    G_{2}\vec{l}_1=
    \begin{pmatrix}
      1  \\
      \beta \\
      \gamma \\
     1 - \alpha - \beta - \gamma
    \end{pmatrix}
    =\mu_2
    \begin{pmatrix}
      1  \\
      \sigma_{2}(\alpha) \\
      \sigma_{2}(\beta) \\
      \sigma_{2}(\gamma)
    \end{pmatrix}, 
  \]
  откуда $\mu_2 = 1$, $\beta =\sigma_{2}(\alpha)$, $\gamma =\sigma_{2}(\beta)$, $1 - \alpha - \beta - \gamma=\sigma_{2}(\gamma)$. Существует $\mu_3$, такое что
  \[ 
    G_{2}\vec{l}_2=
    \begin{pmatrix}
      1  \\
      \gamma \\
      1 - \alpha - \beta - \gamma \\
      \alpha
    \end{pmatrix}
    =\mu_3
    \begin{pmatrix}
      1  \\
      \sigma_{3}(\alpha) \\
      \sigma_{3}(\beta) \\
      \sigma_{3}(\gamma)
    \end{pmatrix},
  \]
   откуда $\mu_3 = 1$, $\sigma_{3}(\alpha) = \gamma = \sigma_{2}(\beta) = \sigma^{2}_{2}(\alpha)$, $\sigma_{3}(\beta) = 1 - \alpha - \beta - \gamma = \sigma_{2}(\gamma) = \sigma^{2}_{2}(\beta)$, $\sigma_{3}(\gamma) = \alpha = 1 -\beta - \gamma - (1 - \alpha - \beta - \gamma) = \sigma_{2}(1 - \alpha - \beta - \gamma) = \sigma^{2}_{2}(\gamma)$.
   Существует $\mu_4$, такое что
  \[ 
    G_{2}\vec{l}_3=
    \begin{pmatrix}
      1  \\
      1 - \alpha - \beta - \gamma  \\
      \alpha \\
      \beta
    \end{pmatrix}
    =\mu_4
    \begin{pmatrix}
      1  \\
      \sigma_{4}(\alpha) \\
      \sigma_{4}(\beta) \\
      \sigma_{4}(\gamma)
    \end{pmatrix},
  \]
   откуда $\mu_4 = 1$, $\sigma_{4}(\alpha) = 1 - \alpha - \beta - \gamma = \sigma_{2}(\gamma) =  \sigma^{3}_{2}(\alpha)$, $\sigma_{4}(\beta) = \alpha = 1 - \beta - \gamma - (1 - \alpha - \beta - \gamma) = \sigma_{2}(1 - \alpha - \beta - \gamma) =  \sigma^{3}_{2}(\beta)$, $\sigma_{4}(\gamma) = \beta  = \sigma_{2}(\alpha) = \sigma^{3}_{2}(\gamma)$, $\trace(\alpha) = 1$. Стало быть, $\cf(l_1, l_2, l_3, l_4) \in \mathbf{CF}_{2}$, так как числа $1,\alpha,\beta, \gamma$ образуют базис поля $K=\Q(\alpha,\beta, \gamma)$.

Пусть $\cf(l_1, l_2, l_3, l_4) \in \mathbf{CF}_{3}$. Тогда

\[    G_{3} \big(\vec{l}_1, \vec{l}_2, \vec{l}_3, \vec{l}_4\big) = \]
 \[  \left( \begin{smallmatrix}
      1\phantom{-}\phantom{-} & 1\phantom{-}\phantom{-} & 1\phantom{-}\phantom{-} & 1 \\
      \sigma(\alpha)\phantom{-}\phantom{-} &  \sigma^{2}(\alpha)\phantom{-}\phantom{-} &  \sigma^{3}(\alpha)\phantom{-}\phantom{-} & \alpha \\
      \sigma^{2}(\alpha)\phantom{-}\phantom{-} &  \sigma^{3}(\alpha)\phantom{-}\phantom{-} &  \alpha\phantom{-}\phantom{-} & \sigma(\alpha) \\
      2 -\alpha - \sigma(\alpha) - \sigma^{2}(\alpha)\phantom{-}\phantom{-} & 2 -\sigma(\alpha) - \sigma^{2}(\alpha) - \sigma^{3}(\alpha)\phantom{-}\phantom{-} & 2 -\sigma^{2}(\alpha) - \sigma^{3}(\alpha) - \alpha\phantom{-}\phantom{-} & 2 - \sigma^{3}(\alpha) - \alpha - \sigma(\alpha) 
    \end{smallmatrix}\right) = \]
\[  \big(\vec{l}_2, \vec{l}_3, \vec{l}_4, \vec{l}_1\big). \]

   Следовательно, $G_{3}$ --- собственная циклическая симметрия $\cf(l_1,l_2,l_3,l_4)$. Обратно, предположим, $G_{3}$ собственная циклическая симметрия $\cf(l_1,l_2,l_3,l_4)$. Тогда существует такое $\mu_2$, что с точностью до перестановки индексов
  \[
    G_{3}\vec{l}_1=
    \begin{pmatrix}
      1  \\
      \beta \\
      \gamma \\
      2 - \alpha - \beta - \gamma
    \end{pmatrix}
    =\mu_2
    \begin{pmatrix}
      1  \\
      \sigma_{2}(\alpha) \\
      \sigma_{2}(\beta) \\
      \sigma_{2}(\gamma)
    \end{pmatrix}, 
  \]
  откуда $\mu_2 = 1$, $\beta =\sigma_{2}(\alpha)$, $\gamma =\sigma_{2}(\beta)$, $2 - \alpha - \beta - \gamma =\sigma_{2}(\gamma)$. Существует $\mu_3$, такое что   
  \[ 
    G_{3}\vec{l}_2=
    \begin{pmatrix}
      1  \\
      \gamma \\
      2 - \alpha - \beta - \gamma \\
      \alpha
    \end{pmatrix}
    =\mu_3
    \begin{pmatrix}
      1  \\
      \sigma_{3}(\alpha) \\
      \sigma_{3}(\beta) \\
      \sigma_{3}(\gamma)
    \end{pmatrix},
  \]
   откуда $\mu_3 = 1$, $\sigma_{3}(\alpha) = \gamma = \sigma_{2}(\beta) = \sigma^{2}_{2}(\alpha)$, $\sigma_{3}(\beta) = 2 - \alpha - \beta - \gamma = \sigma_{2}(\gamma) = \sigma^{2}_{2}(\beta)$, $\sigma_{3}(\gamma) = \alpha = 2 - \beta - \gamma - (2 -\alpha - \beta -\gamma) = \sigma_{2}(2 - \alpha - \beta - \gamma) = \sigma^{2}_{2}(\gamma)$. Существует $\mu_4$, такое что
  \[ 
    G_{3}\vec{l}_3=
    \begin{pmatrix}
      1  \\
      2 - \alpha - \beta - \gamma  \\
      \alpha \\
      \beta
    \end{pmatrix}
    =\mu_4
    \begin{pmatrix}
      1  \\
      \sigma_{4}(\alpha) \\
      \sigma_{4}(\beta) \\
      \sigma_{4}(\gamma)
    \end{pmatrix},
  \]
   откуда $\mu_4 = 1$, $\sigma_{4}(\alpha) = 2 - \alpha - \beta - \gamma = \sigma_{2}(\gamma) = \sigma^{3}_{2}(\alpha)$, $\sigma_{4}(\beta) = \alpha = 2 - \beta - \gamma - (2 -\alpha - \beta -\gamma) = \sigma_{2}(2 - \alpha - \beta - \gamma) = \sigma^{3}_{2}(\beta)$, $\sigma_{4}(\gamma) = \beta = \sigma_{2}(\alpha) = \sigma^{3}_{2}(\gamma)$, $\trace(\alpha) = 2$. Стало быть, $\cf(l_1, l_2, l_3, l_4) \in \mathbf{CF}_{3}$, так как числа $1,\alpha,\beta, \gamma$ образуют базис поля $K=\Q(\alpha,\beta, \gamma)$.

   Пусть $\cf(l_1, l_2, l_3, l_4) \in \mathbf{CF}_{4}$. Тогда

\[    G_{4} \big(\vec{l}_1, \vec{l}_2, \vec{l}_3, \vec{l}_4\big) = \]
 \[  \left( \begin{smallmatrix}
      1\phantom{-}\phantom{-} & 1\phantom{-}\phantom{-} & 1\phantom{-}\phantom{-} & 1 \\
      \sigma(\alpha)\phantom{-}\phantom{-} &  \sigma^{2}(\alpha)\phantom{-}\phantom{-} &  \sigma^{3}(\alpha)\phantom{-}\phantom{-} & \alpha \\
      \sigma^{2}(\alpha)\phantom{-}\phantom{-} &  \sigma^{3}(\alpha)\phantom{-}\phantom{-} &  \alpha\phantom{-}\phantom{-} & \sigma(\alpha) \\
      -\frac{\alpha + \sigma^{2}(\alpha)}{2}\phantom{-}\phantom{-} &  -\frac{\sigma(\alpha) + \sigma^{3}(\alpha)}{2}\phantom{-}\phantom{-} &  -\frac{\sigma^{2}(\alpha) + \alpha}{2}\phantom{-}\phantom{-} & -\frac{\sigma^{3}(\alpha) + \sigma(\alpha)}{2}
    \end{smallmatrix}\right) = \]
\[  \big(\vec{l}_2, \vec{l}_3, \vec{l}_4, \vec{l}_1\big). \]

 Следовательно, $G_{4}$ --- собственная циклическая симметрия $\cf(l_1,l_2,l_3,l_4)$. Обратно, предположим, $G_{4}$ собственная циклическая симметрия $\cf(l_1,l_2,l_3,l_4)$. Тогда существует такое $\mu_2$, что с точностью до перестановки индексов
  \[
    G_{4}\vec{l}_1=
    \begin{pmatrix}
      1  \\
      \beta \\
      - \alpha + 2\gamma\\
      - \gamma
    \end{pmatrix}
    =\mu_2
    \begin{pmatrix}
      1  \\
      \sigma_{2}(\alpha) \\
      \sigma_{2}(\beta) \\
      \sigma_{2}(\gamma)
    \end{pmatrix}, 
  \]
откуда $\mu_2 = 1$, $\beta =\sigma_{2}(\alpha)$, $- \alpha + 2\gamma =\sigma_{2}(\beta)$, $ - \gamma =\sigma_{2}(\gamma)$. Существует $\mu_3$, такое что
    \[ 
    G_{4}\vec{l}_2=
    \begin{pmatrix}
      1  \\
      - \alpha + 2\gamma  \\
      - \beta - 2\gamma \\
      \gamma
    \end{pmatrix}
    =\mu_3
    \begin{pmatrix}
      1  \\
      \sigma_{3}(\alpha) \\
      \sigma_{3}(\beta) \\
      \sigma_{3}(\gamma)
    \end{pmatrix},
  \]
откуда $\mu_3 = 1$, $\frac{\alpha + \sigma_{3}(\alpha)}{2} = \gamma, \sigma_{3}(\alpha) = - \alpha + 2\gamma = \sigma_{2}(\beta) = \sigma^{2}_{2}(\alpha)$,  $\sigma_{3}(\beta) = - \beta - 2\gamma = \sigma_{2}(-\alpha + 2\gamma) = \sigma^{2}_{2}(\beta)$, $\sigma_{3}(\gamma) = \gamma = \sigma_{2}(-\gamma) = \sigma^{2}_{2}(\gamma)$. Существует $\mu_4$, такое что
 \[ 
    G_{4}\vec{l}_3=
    \begin{pmatrix}
      1  \\
      - \beta -  2\gamma  \\
      \alpha \\
      -\gamma
    \end{pmatrix}
    =\mu_4
    \begin{pmatrix}
      1  \\
      \sigma_{4}(\alpha) \\
      \sigma_{4}(\beta) \\
      \sigma_{4}(\gamma)
    \end{pmatrix},
  \]
откуда $\mu_4 = 1$, $\sigma_{4}(\alpha) = - \beta -  2\gamma = \sigma_{2}(-\alpha) + \sigma_{2}(2\gamma) = \sigma_{2}(-\alpha + 2\gamma) = \sigma^{3}_{2}(\alpha)$, $\sigma_{4}(\beta) = \alpha = 2\gamma - \sigma_{2}(\beta)  = \sigma_{2}(-2\gamma - \beta) =  \sigma^{3}_{2}(\beta)$, $\sigma_{4}(\gamma) = -\gamma = \sigma_{2}(\gamma)  =  \sigma^{3}_{2}(\gamma)$, $\trace(\alpha) = 0$. Стало быть, $\cf(l_1, l_2, l_3, l_4) \in \mathbf{CF}_{4}$, так как числа $1,\alpha,\beta, \gamma$ образуют базис поля $K=\Q(\alpha,\beta, \gamma)$.

Пусть $\cf(l_1, l_2, l_3, l_4) \in \mathbf{CF}_{5}$. Тогда

\[    G_{5} \big(\vec{l}_1, \vec{l}_2, \vec{l}_3, \vec{l}_4\big) = \]
 \[  \left( \begin{smallmatrix}
      1\phantom{-}\phantom{-} & 1\phantom{-}\phantom{-} & 1\phantom{-}\phantom{-} & 1 \\
      \sigma(\alpha)\phantom{-}\phantom{-} &  \sigma^{2}(\alpha)\phantom{-}\phantom{-} &  \sigma^{3}(\alpha)\phantom{-}\phantom{-} & \alpha \\
      \sigma^{2}(\alpha)\phantom{-}\phantom{-} &  \sigma^{3}(\alpha)\phantom{-}\phantom{-} &  \alpha\phantom{-}\phantom{-} & \sigma(\alpha) \\
      \frac{2 - \alpha - \sigma^{2}(\alpha)}{2}\phantom{-}\phantom{-} &  \frac{2 - \sigma(\alpha) - \sigma^{3}(\alpha)}{2}\phantom{-}\phantom{-} &  \frac{2 - \sigma^{2}(\alpha) - \alpha}{2}\phantom{-}\phantom{-} & \frac{2 - \sigma^{3}(\alpha) - \sigma(\alpha)}{2}
    \end{smallmatrix}\right) = \]
\[  \big(\vec{l}_2, \vec{l}_3, \vec{l}_4, \vec{l}_1\big). \]

   Следовательно, $G_{5}$ --- собственная циклическая симметрия $\cf(l_1,l_2,l_3,l_4)$. Обратно, предположим, $G_{5}$ собственная циклическая симметрия $\cf(l_1,l_2,l_3,l_4)$. Тогда существует такое $\mu_2$, что с точностью до перестановки индексов
   \[
    G_{5}\vec{l}_1=
    \begin{pmatrix}
      1  \\
      \beta \\
      - \alpha + 2\gamma\\
      1 - \gamma
    \end{pmatrix}
    =\mu_2
    \begin{pmatrix}
      1  \\
      \sigma_{2}(\alpha) \\
      \sigma_{2}(\beta) \\
      \sigma_{2}(\gamma)
    \end{pmatrix}, 
  \]
откуда $\mu_2 = 1$, $\beta =\sigma_{2}(\alpha)$, $- \alpha + 2\gamma =\sigma_{2}(\beta)$, $ 1 - \gamma =\sigma_{2}(\gamma)$. Существует $\mu_3$, такое что
    \[ 
    G_{5}\vec{l}_2=
    \begin{pmatrix}
      1  \\
      - \alpha + 2\gamma  \\
     2 - \beta - 2\gamma \\
      \gamma
    \end{pmatrix}
    =\mu_3
    \begin{pmatrix}
      1  \\
      \sigma_{3}(\alpha) \\
      \sigma_{3}(\beta) \\
      \sigma_{3}(\gamma)
    \end{pmatrix},
  \]
откуда $\mu_3 = 1$, $\frac{\alpha + \sigma_{3}(\alpha)}{2} = \gamma, \sigma_{3}(\alpha) = - \alpha + 2\gamma = \sigma_{2}(\beta) = \sigma^{2}_{2}(\alpha)$,  $\sigma_{3}(\beta) = 2 - \beta - 2\gamma = \sigma_{2}(-\alpha + 2\gamma) = \sigma^{2}_{2}(\beta)$, $\sigma_{3}(\gamma) = \gamma = \sigma_{2}(1 - \gamma) = \sigma^{2}_{2}(\gamma)$. Существует $\mu_4$, такое что
 \[ 
    G_{5}\vec{l}_3=
    \begin{pmatrix}
      1  \\
      2 - \beta -  2\gamma  \\
      \alpha \\
      1 -\gamma
    \end{pmatrix}
    =\mu_4
    \begin{pmatrix}
      1  \\
      \sigma_{4}(\alpha) \\
      \sigma_{4}(\beta) \\
      \sigma_{4}(\gamma)
    \end{pmatrix},
  \]
откуда $\mu_4 = 1$, $\sigma_{4}(\alpha) = 2 - \beta -  2\gamma = \sigma_{2}(-\alpha) + \sigma_{2}(2\gamma) = \sigma_{2}(-\alpha + 2\gamma) = \sigma^{3}_{2}(\alpha)$, $\sigma_{4}(\beta) = \alpha = 2\gamma - \sigma_{2}(\beta)  = \sigma_{2}(2 - 2\gamma - \beta) =  \sigma^{3}_{2}(\beta)$, $\sigma_{4}(\gamma) = 1 -\gamma = \sigma_{2}(\gamma)  =  \sigma^{3}_{2}(\gamma)$, $\trace(\alpha) = 2$. Стало быть, $\cf(l_1, l_2, l_3, l_4) \in \mathbf{CF}_{5}$, так как числа $1,\alpha,\beta, \gamma$ образуют базис поля $K=\Q(\alpha,\beta, \gamma)$.

Пусть $\cf(l_1, l_2, l_3, l_4) \in \mathbf{CF}_{6}$. Тогда

\[    G_{6} \big(\vec{l}_1, \vec{l}_2, \vec{l}_3, \vec{l}_4\big) = \]
 \[  \left( \begin{smallmatrix}
      1\phantom{-}\phantom{-} & 1\phantom{-}\phantom{-} & 1\phantom{-}\phantom{-} & 1 \\
      \sigma(\alpha)\phantom{-}\phantom{-} &  \sigma^{2}(\alpha)\phantom{-}\phantom{-} &  \sigma^{3}(\alpha)\phantom{-}\phantom{-} & \alpha \\
      \sigma^{2}(\alpha)\phantom{-}\phantom{-} &  \sigma^{3}(\alpha)\phantom{-}\phantom{-} &  \alpha\phantom{-}\phantom{-} & \sigma(\alpha) \\
      \frac{1 - \alpha - \sigma^{2}(\alpha)}{2}\phantom{-}\phantom{-} &  \frac{1 - \sigma(\alpha) - \sigma^{3}(\alpha)}{2}\phantom{-}\phantom{-} &  \frac{1 - \sigma^{2}(\alpha) - \alpha}{2}\phantom{-}\phantom{-} & \frac{1 - \sigma^{3}(\alpha) - \sigma(\alpha)}{2}
    \end{smallmatrix}\right) = \]
\[  \big(\vec{l}_2, \vec{l}_3, \vec{l}_4, \vec{l}_1\big). \]

 Следовательно, $G_{6}$ --- собственная циклическая симметрия $\cf(l_1,l_2,l_3,l_4)$. Обратно, предположим, $G_{6}$ собственная циклическая симметрия $\cf(l_1,l_2,l_3,l_4)$. Тогда существует такое $\mu_2$, что с точностью до перестановки индексов
  \[
    G_{6}\vec{l}_1=
    \begin{pmatrix}
      1  \\
      \beta \\
      -1 - \alpha + 2\gamma \\
      1 - \gamma
    \end{pmatrix}
    =\mu_2
    \begin{pmatrix}
      1  \\
      \sigma_{2}(\alpha) \\
      \sigma_{2}(\beta) \\
      \sigma_{2}(\gamma)
    \end{pmatrix}, 
  \]
откуда $\mu_2 = 1$, $\beta =\sigma_{2}(\alpha)$, $-1 - \alpha + 2\gamma  =\sigma_{2}(\beta)$, $1 - \gamma =\sigma_{2}(\gamma)$. Существует $\mu_3$, такое что
   \[ 
    G_{6}\vec{l}_2=
    \begin{pmatrix}
      1  \\
      -1 - \alpha + 2\gamma  \\
      1 - \beta - 2\gamma\\
      \gamma
    \end{pmatrix}
    =\mu_3
    \begin{pmatrix}
      1  \\
      \sigma_{3}(\alpha) \\
      \sigma_{3}(\beta) \\
      \sigma_{3}(\gamma)
    \end{pmatrix},
  \]
откуда $\mu_3 = 1$, $\frac{\alpha + \sigma_{3}(\alpha) + 1}{2} = \gamma, \sigma_{3}(\alpha) = -1 - \alpha + 2\gamma = \sigma_{2}(\beta) = \sigma^{2}_{2}(\alpha)$, $\sigma_{3}(\beta) = 1 - \beta - 2\gamma = -1 -\beta + (2 - 2\gamma) = \sigma_{2}(-1 -\alpha + 2\gamma) = \sigma^{2}_{2}(\beta)$, $\sigma_{3}(\gamma) = \gamma = \sigma_{2}(1 - \gamma) = \sigma^{3}_{2}(\gamma)$. Существует $\mu_4$, такое что
\[ 
    G_{6}\vec{l}_3=
    \begin{pmatrix}
      1  \\
      1 - \beta -  2\gamma \\
      \alpha \\
      1 - \gamma
    \end{pmatrix}
    =\mu_4
    \begin{pmatrix}
      1  \\
      \sigma_{4}(\alpha) \\
      \sigma_{4}(\beta) \\
      \sigma_{4}(\gamma)
    \end{pmatrix},
  \]
откуда $\mu_4 = 1$, $\sigma_{4}(\alpha) = 1 - \beta -  2\gamma = -1 -\beta + (2 - 2\gamma) = \sigma_{2}(-1 -\alpha + 2\gamma) = \sigma^{3}_{2}(\alpha)$, $\sigma_{4}(\beta) = \alpha = 1 -(-1 -\alpha + 2\gamma) - (2 - 2\gamma) = \sigma_{2}(1 -\beta - 2\gamma) = \sigma^{3}_{2}(\beta)$, $\sigma_{4}(\gamma) = 1 - \gamma = \sigma_{2}(\gamma) = \sigma^{3}_{2}(\gamma)$, $\trace(\alpha) = 0$. Стало быть, $\cf(l_1, l_2, l_3, l_4) \in \mathbf{CF}_{6}$, так как числа $1,\alpha,\beta, \gamma$ образуют базис поля $K=\Q(\alpha,\beta, \gamma)$.

 Пусть $\cf(l_1, l_2, l_3, l_4) \in \mathbf{CF}_{7}$. Тогда

\[    G_{7} \big(\vec{l}_1, \vec{l}_2, \vec{l}_3, \vec{l}_4\big) = \]
 \[  \left( \begin{smallmatrix}
      1\phantom{-}\phantom{-} & 1\phantom{-}\phantom{-} & 1\phantom{-}\phantom{-} & 1 \\
      \sigma(\alpha)\phantom{-}\phantom{-} &  \sigma^{2}(\alpha)\phantom{-}\phantom{-} &  \sigma^{3}(\alpha)\phantom{-}\phantom{-} & \alpha \\
      \sigma^{2}(\alpha)\phantom{-}\phantom{-} &  \sigma^{3}(\alpha)\phantom{-}\phantom{-} &  \alpha\phantom{-}\phantom{-} & \sigma(\alpha) \\
      \frac{3 - \alpha - \sigma^{2}(\alpha)}{2}\phantom{-}\phantom{-} &  \frac{3 - \sigma(\alpha) - \sigma^{3}(\alpha)}{2}\phantom{-}\phantom{-} &  \frac{3 - \sigma^{2}(\alpha) - \alpha}{2}\phantom{-}\phantom{-} & \frac{3 - \sigma^{3}(\alpha) - \sigma(\alpha)}{2}
    \end{smallmatrix}\right) = \]
\[  \big(\vec{l}_2, \vec{l}_3, \vec{l}_4, \vec{l}_1\big). \]

 Следовательно, $G_{7}$ --- собственная циклическая симметрия $\cf(l_1,l_2,l_3,l_4)$. Обратно, предположим, $G_{7}$ собственная циклическая симметрия $\cf(l_1,l_2,l_3,l_4)$. Тогда существует такое $\mu_2$, что с точностью до перестановки индексов
  \[
    G_{7}\vec{l}_1=
    \begin{pmatrix}
      1  \\
      \beta \\
      -1 - \alpha + 2\gamma \\
      2 - \gamma
    \end{pmatrix}
    =\mu_2
    \begin{pmatrix}
      1  \\
      \sigma_{2}(\alpha) \\
      \sigma_{2}(\beta) \\
      \sigma_{2}(\gamma)
    \end{pmatrix}, 
  \]
откуда $\mu_2 = 1$, $\beta =\sigma_{2}(\alpha)$, $-1 - \alpha + 2\gamma = \sigma_{2}(\beta)$, $2 - \gamma =\sigma_{2}(\gamma)$. Существует $\mu_3$, такое что
   \[ 
    G_{7}\vec{l}_2=
    \begin{pmatrix}
      1  \\
      -1 - \alpha + 2\gamma  \\
      3 - \beta - 2\gamma\\
      \gamma
    \end{pmatrix}
    =\mu_3
    \begin{pmatrix}
      1  \\
      \sigma_{3}(\alpha) \\
      \sigma_{3}(\beta) \\
      \sigma_{3}(\gamma)
    \end{pmatrix},
  \]
откуда $\mu_3 = 1$, $\frac{\alpha + \sigma_{3}(\alpha) + 1}{2} = \gamma, \sigma_{3}(\alpha) = -1 - \alpha + 2\gamma = \sigma_{2}(\beta) = \sigma^{2}_{2}(\alpha)$, $\sigma_{3}(\beta) = 3 - \beta - 2\gamma = -1 -\beta + (4 - 2\gamma) = \sigma_{2}(-1 -\alpha + 2\gamma) = \sigma^{2}_{2}(\beta)$, $\sigma_{3}(\gamma) = \gamma = \sigma_{2}(2 - \gamma) = \sigma^{3}_{2}(\gamma)$. Существует $\mu_4$, такое что
\[ 
    G_{7}\vec{l}_3=
    \begin{pmatrix}
      1  \\
      3 - \beta -  2\gamma \\
      \alpha \\
      2- \gamma
    \end{pmatrix}
    =\mu_4
    \begin{pmatrix}
      1  \\
      \sigma_{4}(\alpha) \\
      \sigma_{4}(\beta) \\
      \sigma_{4}(\gamma)
    \end{pmatrix},
  \]
откуда $\mu_4 = 1$, $\sigma_{4}(\alpha) = 3 - \beta -  2\gamma = -1 -\beta + (4 - 2\gamma) = \sigma_{2}(-1 -\alpha + 2\gamma) = \sigma^{3}_{2}(\alpha)$, $\sigma_{4}(\beta) = \alpha = 3 -(-\alpha + 2\gamma - 1) - (4 - 2\gamma) = \sigma_{2}(3 -\beta - 2\gamma) = \sigma^{3}_{2}(\beta)$, $\sigma_{4}(\gamma) = 2 - \gamma = \sigma_{2}(\gamma) = \sigma^{3}_{2}(\gamma)$, $\trace(\alpha) = 2$. Стало быть, $\cf(l_1, l_2, l_3, l_4) \in \mathbf{CF}_{7}$, так как числа $1,\alpha,\beta, \gamma$ образуют базис поля $K=\Q(\alpha,\beta, \gamma)$.
\end{proof}

Обозначим также для каждого $i=1,\ldots,7$ через $\overline{\mathbf{CF}}_i$ образ $\mathbf{CF}_i$ при действии группы $\Gl_4(\Z)$:
\[
  \overline{\mathbf{CF}}_i=
  \Big\{ \cf(l_1,l_2,l_3,l_4)\in\gA_3' \,\Big|\, \exists X\in\Gl_4(\Z):X\big(\cf(l_1,l_2,l_3,l_4)\big)\in\mathbf{CF}_i \Big\}.
\]

\begin{lemma}\label{l:CF_instead_of_statements}
  Для дроби $\cf(l_1,l_2,l_3,l_4)\in\gA_3$ выполняется условие $(i)$ теоремы \ref{main_t_4} тогда и только тогда, когда $\cf(l_1,l_2,l_3,l_4)$ принадлежит классу $\overline{\mathbf{CF}}_i$, где $i\in\{1,2,3,4,5,6,7\}$.
\end{lemma}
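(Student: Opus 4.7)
The plan is to prove the seven equivalences case-by-case by a direct unwinding of the definitions, matching each class $\overline{\mathbf{CF}}_i$ with the corresponding condition $(i)$ via the ``canonical form'' vector that defines $\mathbf{CF}_i$. The key idea is that the defining relations of $\mathbf{CF}_i$ (expressing $\alpha,\beta,\gamma$ through a cyclic Galois generator $\sigma$ applied to $\alpha$, possibly with a half-integer correction in cases $i=4,\ldots,7$) are precisely the coordinate expressions of the right-hand side of the $\sim$-relation in condition $(i)$ of Theorem \ref{main_t_4}, under the identification $\omega \leftrightarrow \alpha$ and $\omega^{(k)} \leftrightarrow \sigma^k(\alpha)$.

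For the direction $\overline{\mathbf{CF}}_i \Rightarrow (i)$, I would take $X\in\Gl_4(\Z)$ with $X\cf\in\mathbf{CF}_i$, write the first subspace of $X\cf$ as spanned by $(1,\alpha',\beta',\gamma')$ in the explicit form prescribed by $\mathbf{CF}_i$, set $\omega:=\alpha'$, and label the remaining conjugates as $\omega',\omega'',\omega''' := \sigma(\omega),\sigma^2(\omega),\sigma^3(\omega)$. A direct reading of the defining relations then converts them into the vector and trace condition of clause $(i)$ (e.g., $\gamma'=\tfrac{\alpha'+\sigma^2(\alpha')}{2}$ becomes $\tfrac{\omega+\omega''}{2}$ in case $(4)$), and the equivalence is witnessed by the same $X$ together with $\mu=1$.

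For the direction $(i)\Rightarrow\overline{\mathbf{CF}}_i$, I would take $X\in\Gl_4(\Z)$ and $\mu\neq 0$ witnessing the $\sim$-equivalence in condition $(i)$, and examine $\cf':=X\cf$, whose first subspace is spanned by the canonical vector of case $(i)$. By Proposition \ref{prop:more_than_pelle_n_dim}, its four entries form a $\Q$-basis of a totally real number field $K'$ of degree $4$, which must coincide with $\Q(\omega)$ since $\omega$ has degree $4$; thus $\Q(\omega)$ is normal and contains all $\omega^{(k)}$. The trace condition together with the formula $*_i$ are then used to identify the Galois generator $\sigma$ (determined by $\omega\mapsto\omega'$) as satisfying $\sigma^k(\omega)=\omega^{(k)}$ for $k=2,3$, yielding cyclic Galois structure of order $4$ and putting $\cf'$ into $\mathbf{CF}_i$, i.e.\ $\cf\in\overline{\mathbf{CF}}_i$.

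The main obstacle is the verification, in the reverse direction, that condition $(i)$ already forces the cyclic structure of the Galois group rather than merely the normal one. I would overcome this by appealing to Lemma \ref{oper_eq_4d}: once the coordinates of the first subspace of $\cf'$ match the explicit form prescribed by $\mathbf{CF}_i$, the column-by-column calculation carried out in the proof of that lemma shows directly that $G_i$ is a proper cyclic symmetry of $\cf'$, using only the trace condition and the formula $*_i$. Then the reverse direction of Lemma \ref{oper_eq_4d} supplies the cyclic Galois structure needed for $\cf'\in\mathbf{CF}_i$. The half-integer cases $i=4,\ldots,7$ require the additional careful check that the expression $*_i$ is preserved under the $\sigma^2$-action implicitly encoded by $G_i$, so that the cyclic closure condition $G_i^4=I$ is consistent with the trace relation $\omega+\omega'+\omega''+\omega'''\in\{0,1,2\}$.
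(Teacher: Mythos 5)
Your first direction ($\overline{\mathbf{CF}}_i\Rightarrow(i)$) and the overall strategy of unwinding the definition of $\sim$ coincide with the paper's own argument, which consists of exactly three observations: conjugation by $X\in\Gl_4(\Z)$ preserves hyperbolicity, the eigenspaces of a hyperbolic operator are recovered from any single eigenvector (so the image of $l_1$ determines the image of the whole fraction), and $\sim$-equivalence of the spanning vectors is by definition the existence of such an $X$. That part of your proposal is in order (up to the inessential remark that the scalar $\mu$ witnessing the equivalence need not be $1$).

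The gap is in the direction $(i)\Rightarrow\overline{\mathbf{CF}}_i$, precisely at the point you yourself single out as the main obstacle. From condition $(i)$ one does get that $K'=\Q(\omega)$ is a normal totally real quartic field containing all conjugates of $\omega$; but nothing in condition $(i)$ guarantees that the automorphism $\sigma\colon\omega\mapsto\omega'$ has order $4$, nor that $\omega''=\sigma^2(\omega)$ --- the symbols $\omega',\omega'',\omega'''$ are an arbitrary ordering of the conjugates. Your proposed remedy, invoking the column-by-column computation from Lemma \ref{oper_eq_4d}, is circular: that computation takes the relations $\beta=\sigma(\alpha)$, $\gamma=\sigma^{2}(\alpha)$ (or their half-integer variants) with $\sigma$ a generator of a cyclic group of order $4$ as its \emph{hypotheses}, and these are exactly the relations you need to derive. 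The danger is not merely formal: for $\omega=\sqrt2+\sqrt3+\sqrt6$ one has $\deg\omega=4$, $\trace(\omega)=0$, $\Q(\omega)=\Q(\sqrt2,\sqrt3)$ totally real, and $1,\omega,\omega',\omega''$ a $\Q$-basis of this field, so $(1,\omega,\omega',\omega'')$ spans the first eigenline of a fraction in $\gA_3$ having the shape required by condition $(1)$, while the Galois group is $(\Z/2\Z)^2$ and the fraction therefore cannot lie in $\gA_3'\supseteq\overline{\mathbf{CF}}_1$. So an additional argument (or an additional convention fixing the ordering of the conjugates via a cyclic generator, which is evidently what is implicitly intended) is needed to exclude the biquadratic case; note that the paper's own two-line proof of this lemma does not address this point either, so your proposal at least localizes the difficulty correctly even though the fix offered does not close it.
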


\begin{proof}
  Для любого $X\in\Gl_4(\Z)$ гиперболичность оператора $A\in\Gl_4(\Z)$ равносильна гиперболичности оператора $XAX^{-1}$. При этом собственные подпространства гиперболического оператора однозначно восстанавливаются по любому его собственному вектору. Остаётся воспользоваться определением эквивалентности из параграфа \ref{intro}.
\end{proof}

Теорему \ref{main_t_4} при помощи леммы \ref{l:CF_instead_of_statements}  можно переформулировать следующим образом: \emph{дробь $\cf(l_1,l_2,l_3,l_4)\in\gA_3$ имеет собственную циклическую симметрию $G$ тогда и только тогда, когда $\cf(l_1,l_2,l_3,l_4)$ принадлежит одному из классов $\overline{\mathbf{CF}}_i$, где $i\in\{1,2,3,4,5,6,7\}$}.

\begin{proof}[Доказательство теоремы \ref{main_t_4}]
Если $\cf(l_1,l_2,l_3,l_4)$ принадлежит какому-то $\overline{\mathbf{CF}}_i$, то по лемме \ref{oper_eq_4d} она имеет собственную циклическую симметрию $G$, ибо действие оператора из $\Gl_4(\Z)$ сохраняет свойство существования у алгебраической цепной дроби собственной циклической симметрии.

Обратно, пусть дробь $\cf(l_1,l_2,l_3,l_4)\in\gA_3$ имеет собственную циклическую симметрию $G$ с неподвижной точкой на некотором парусе $\partial(\cK(C)) \in \cf(l_1,l_2, l_3, l_4)$. Рассмотрим точки $\vec z_1$, $\vec z_2$, $\vec z_3$, $\vec z_4$ из леммы \ref{main_lem}. Обозначим также через $\vec{e}_1$, $\vec{e}_2$, $\vec{e}_3$, $\vec{e}_4$ стандартный базис $\R^4$. Для точек $\vec z_1$, $\vec z_2$, $\vec z_3$, $\vec z_4$ выполняется хотя бы одно из утверждений \textup{(1)} - \textup{(7)} леммы \ref{main_lem}.

Пусть выполняется утверждение \textup{(1)} леммы \ref{main_lem}. Рассмотрим такой оператор $X_{1} \in \Gl_4(\Z)$, что
\[X_{1}\big(\vec{z}_1, \vec{z}_2, \vec{z}_3, \frac{1}{4}(\vec{z}_{1}+\vec{z}_{2}+\vec{z}_{3}+\vec{z}_{4})\big) = \big(\vec{e}_1 - \vec{e}_2, \vec{e}_1 + \vec{e}_4, \vec{e}_1 + \vec{e}_3 - \vec{e}_4, \vec{e}_1\big).\]
Тогда $X_{1}(\vec{z}_4) = X_{1}(4\cdot\frac{1}{4}(\vec{z}_{1}+\vec{z}_{2}+\vec{z}_{3}+\vec{z}_{4}) - \vec{z}_1 - \vec{z}_2 - \vec{z}_3) = \vec{e}_1 + \vec{e}_2 - \vec{e}_3$ и $X_{1}GX_{1}^{-1} = G_{1}$, так как по лемме \ref{main_lem}
\[X_{1}GX_{1}^{-1}\big(\vec{e}_1 - \vec{e}_2, \vec{e}_1 + \vec{e}_4, \vec{e}_1 + \vec{e}_3 - \vec{e}_4, \vec{e}_1 + \vec{e}_2 - \vec{e}_3\big) = \]
\[\big(\vec{e}_1 + \vec{e}_4, \vec{e}_1 + \vec{e}_3 - \vec{e}_4, \vec{e}_1 + \vec{e}_2 - \vec{e}_3, \vec{e}_1 - \vec{e}_2\big).\]
Стало быть, $X_{1}\big(\cf(l_1,l_2,l_3,l_4)\big) \in  \mathbf{CF}_{1}$, то есть $\cf(l_1,l_2,l_3,l_4)\in\overline{\mathbf{CF}}_1$.

Пусть выполняется утверждение \textup{(2)} леммы \ref{main_lem}. Рассмотрим такой оператор $X_{2} \in \Gl_4(\Z)$, что
\[X_{2}\big(\vec{z}_1, \vec{z}_2, \vec{z}_3, \vec{z}_{4}\big) = \big(\vec{e}_1, \vec{e}_1 + \vec{e}_4, \vec{e}_1 + \vec{e}_3, \vec{e}_1 + \vec{e}_2\big).\]
Тогда $X_{2}GX_{2}^{-1} = G_{2}$, так как по лемме \ref{main_lem}
\[X_{2}GX_{2}^{-1}\big(\vec{e}_1, \vec{e}_1 + \vec{e}_4, \vec{e}_1 + \vec{e}_3, \vec{e}_1 + \vec{e}_2\big) = \]
\[\big( \vec{e}_1 + \vec{e}_4, \vec{e}_1 + \vec{e}_3, \vec{e}_1 + \vec{e}_2, \vec{e}_1\big).\]
Стало быть, $X_{2}\big(\cf(l_1,l_2,l_3,l_4)\big) \in  \mathbf{CF}_{2}$, то есть $\cf(l_1,l_2,l_3,l_4)\in\overline{\mathbf{CF}}_2$.

Пусть выполняется утверждение \textup{(3)} леммы \ref{main_lem}. Рассмотрим такой оператор $X_{3} \in \Gl_4(\Z)$, что
\[X_{3}\big(\vec{z}_1, \frac{1}{2}(\vec{z}_{1}+\vec{z}_{2}), \frac{1}{2}(\vec{z}_{1}+\vec{z}_{3}), \frac{1}{2}(\vec{z}_{1}+\vec{z}_{4})\big) = \big(\vec{e}_1, \vec{e}_1 + \vec{e}_4, \vec{e}_1 + \vec{e}_3, \vec{e}_1 + \vec{e}_2 \big).\]
Тогда $X_{3}(\vec{z}_2) = X_{3}(2\cdot\frac{1}{2}(\vec{z}_1 + \vec{z}_2) - \vec{z}_1) = \vec{e}_1  + 2\vec{e}_4$, $X_{3}(\vec{z}_3) = X_{3}(2\cdot\frac{1}{2}(\vec{z}_1 + \vec{z}_3) - \vec{z}_1) = \vec{e}_1  + 2\vec{e}_3$, $X_{3}(\vec{z}_4) = X_{3}(2\cdot\frac{1}{2}(\vec{z}_{1}+\vec{z}_{4})-\vec{z}_1) = \vec{e}_1 + 2\vec{e}_2$ и $X_{3}GX_{3}^{-1} = G_{3}$, так как по лемме \ref{main_lem}
\[X_{3}GX_{3}^{-1}\big(\vec{e}_1, \vec{e}_1  + 2\vec{e}_4, \vec{e}_1 + 2\vec{e}_3, \vec{e}_1 + 2\vec{e}_2\big) = \]
\[\big(\vec{e}_1  + 2\vec{e}_4, \vec{e}_1 + 2\vec{e}_3, \vec{e}_1 + 2\vec{e}_2, \vec{e}_1\big).\]
Стало быть, $X_{3}\big(\cf(l_1,l_2,l_3,l_4)\big) \in  \mathbf{CF}_{3}$, то есть $\cf(l_1,l_2,l_3,l_4)\in\overline{\mathbf{CF}}_3$.

Пусть выполняется утверждение \textup{(4)} леммы \ref{main_lem}. Рассмотрим такой оператор $X_{4} \in \Gl_4(\Z)$, что
\[X_{4}\big(\vec{z}_1, \vec{z}_2, \frac{1}{2}(\vec{z}_1 + \vec{z}_3),  \frac{1}{4}(\vec{z}_{1}+\vec{z}_{2}+\vec{z}_{3}+\vec{z}_{4})\big) = \big(\vec{e}_1 - \vec{e}_3 + \vec{e}_4, \vec{e}_1 -\vec{e}_2 + 2\vec{e}_3 - \vec{e}_4, \vec{e}_1 + \vec{e}_2 - \vec{e}_3 +\vec{e}_4, \vec{e}_1\big).\]
Тогда $X_{4}(\vec{z}_3) = X_{4}(2\cdot\frac{1}{2}(\vec{z}_1 + \vec{z}_3) - \vec{z}_1) = \vec{e}_1 + 2\vec{e}_2 - \vec{e}_3 + \vec{e}_4$, $X_{4}(\vec{z}_4) = X_{4}(4\cdot\frac{1}{4}(\vec{z}_{1}+\vec{z}_{2}+\vec{z}_{3}+\vec{z}_{4}) - \vec{z}_1 - \vec{z}_2 - \vec{z}_3) = \vec{e}_1 - \vec{e}_2 - \vec{e}_4$ и $X_{4}GX_{4}^{-1} = G_{4}$, так как по лемме \ref{main_lem}
\[X_{4}GX_{4}^{-1}\big(\vec{e}_1 - \vec{e}_3 + \vec{e}_4, \vec{e}_1 -\vec{e}_2 + 2\vec{e}_3 - \vec{e}_4, \vec{e}_1 + 2\vec{e}_2 - \vec{e}_3 + \vec{e}_4, \vec{e}_1 - \vec{e}_2 - \vec{e}_4\big) = \]
\[\big(\vec{e}_1 -\vec{e}_2 + 2\vec{e}_3 - \vec{e}_4, \vec{e}_1 + 2\vec{e}_2 - \vec{e}_3 + \vec{e}_4, \vec{e}_1 - \vec{e}_2 - \vec{e}_4, \vec{e}_1 - \vec{e}_3 + \vec{e}_4\big).\]
Стало быть, $X_{4}\big(\cf(l_1,l_2,l_3,l_4)\big) \in  \mathbf{CF}_{4}$, то есть $\cf(l_1,l_2,l_3,l_4)\in\overline{\mathbf{CF}}_4$.

Пусть выполняется утверждение \textup{(5)} леммы \ref{main_lem}. Рассмотрим такой оператор $X_{5} \in \Gl_4(\Z)$, что
\[X_{5}\big(\vec{z}_1, \vec{z}_2, \frac{1}{2}(\vec{z}_1 + \vec{z}_3),  \frac{1}{2}(\vec{z}_2 + \vec{z}_4)\big) = \big(\vec{e}_1, \vec{e}_1 + \vec{e}_4, \vec{e}_1 + \vec{e}_3, \vec{e}_1 + \vec{e}_2\big).\]
Тогда $X_{5}(\vec{z}_3) = X_{5}(2\cdot\frac{1}{2}(\vec{z}_1 + \vec{z}_3) - \vec{z}_1) = \vec{e}_1 + 2\vec{e}_3$, $X_{5}(\vec{z}_4) = X_{5}(2\cdot\frac{1}{2}(\vec{z}_2 + \vec{z}_4) - \vec{z}_2) = \vec{e}_1 + 2\vec{e}_2 + \vec{e}_4$ и $X_{5}GX_{5}^{-1} = G_{5}$, так как по лемме \ref{main_lem}
\[X_{5}GX_{5}^{-1}\big(\vec{e}_1, \vec{e}_1 + \vec{e}_4, \vec{e}_1 + 2\vec{e}_3, \vec{e}_1 + 2\vec{e}_2 + \vec{e}_4\big) = \]
\[\big( \vec{e}_1 + \vec{e}_4, \vec{e}_1 + 2\vec{e}_3, \vec{e}_1 + 2\vec{e}_2 + \vec{e}_4, \vec{e}_1\big).\]
Стало быть, $X_{5}\big(\cf(l_1,l_2,l_3,l_4)\big) \in  \mathbf{CF}_{5}$, то есть $\cf(l_1,l_2,l_3,l_4)\in\overline{\mathbf{CF}}_5$.

Пусть выполняется утверждение \textup{(6)} леммы \ref{main_lem}. Рассмотрим такой оператор $X_{6} \in \Gl_4(\Z)$, что
\[X_{6}\big(\vec{z}_1, \vec{z}_{2}, \vec{z}_{3}, \frac{1}{2}(\vec{z}_{1} + \vec{z}_{3} + \vec{z}_{4} - \vec{z}_{2})\big) =\]
\[\big(\vec{e}_1 + \vec{e}_2 + \vec{e}_4, \vec{e}_1, \vec{e}_1 - \vec{e}_3 + \vec{e}_4, \vec{e}_1 + \vec{e}_4\big).\]
Тогда $X_{6}(\vec{z}_4) = X_{6}(2\cdot\frac{1}{2}(\vec{z}_{1} + \vec{z}_{3} + \vec{z}_{4} - \vec{z}_{2}) - \vec{z}_1 + \vec{z}_2 - \vec{z}_3) = \vec{e}_1 - \vec{e}_2 + \vec{e}_3$ и $X_{6}GX_{6}^{-1} = G_{6}$, так как по лемме \ref{main_lem}
\[X_{6}GX_{6}^{-1}\big(\vec{e}_1 + \vec{e}_2 + \vec{e}_4, \vec{e}_1, \vec{e}_1 - \vec{e}_3 + \vec{e}_4, \vec{e}_1 - \vec{e}_2 + \vec{e}_3\big) = \]
\[\big(\vec{e}_1, \vec{e}_1 - \vec{e}_3 + \vec{e}_4, \vec{e}_1 - \vec{e}_2 + \vec{e}_3, \vec{e}_1 + \vec{e}_2 + \vec{e}_4\big).\]
Стало быть, $X_{6}\big(\cf(l_1,l_2,l_3,l_4)\big) \in  \mathbf{CF}_{6}$, то есть $\cf(l_1,l_2,l_3,l_4)\in\overline{\mathbf{CF}}_6$.

Пусть выполняется утверждение \textup{(7)} леммы \ref{main_lem}. Рассмотрим такой оператор $X_{7} \in \Gl_4(\Z)$, что
\[X_{7}\big(\vec{z}_1, \vec{z}_{2}, \vec{z}_{3}, \frac{1}{2}(\vec{z}_{1}+\vec{z}_{2}) + \frac{1}{4}(\vec{z}_{1}+\vec{z}_{4} - \vec{z}_{3} - \vec{z}_{2})\big) =\]
\[\big(\vec{e}_1+\vec{e}_2-\vec{e}_3+2\vec{e}_4, \vec{e}_1-\vec{e}_2+2\vec{e}_3, \vec{e}_1 + 2\vec{e}_2 + 2\vec{e}_4, \vec{e}_1+\vec{e}_4\big).\]
Тогда $X_{7}(\vec{z}_4) = X_{7}(4(\frac{3\vec{z}_1+\vec{z}_2-\vec{z}_3+\vec{z}_4}{4})-3\vec{z}_1-\vec{z}_2+\vec{z}_3) = \vec{e}_1  + \vec{e}_3$ и $X_{7}GX_{7}^{-1} = G_{7}$, так как по лемме \ref{main_lem}
\[X_{7}GX_{7}^{-1}\big(\vec{e}_1+\vec{e}_2-\vec{e}_3+2\vec{e}_4, \vec{e}_1-\vec{e}_2+2\vec{e}_3, \vec{e}_1 + 2\vec{e}_2 + 2\vec{e}_4, \vec{e}_1  + \vec{e}_3\big) = \]
\[\big(\vec{e}_1-\vec{e}_2+2\vec{e}_3, \vec{e}_1 + 2\vec{e}_2 + 2\vec{e}_4, \vec{e}_1  + \vec{e}_3, \vec{e}_1+\vec{e}_2-\vec{e}_3+2\vec{e}_4\big).\]
Стало быть, $X_{7}\big(\cf(l_1,l_2,l_3,l_4)\big) \in  \mathbf{CF}_{7}$, то есть $\cf(l_1,l_2,l_3,l_4)\in\overline{\mathbf{CF}}_7$.
\end{proof}

\section*{Благодарности}
Автор выражает отдельную благодарность О.Н. Герману за постоянный интерес к данной работе и многочисленные, исключительно важные обсуждения результатов.

Автор является победителем конкурса «Junior Leader» Фонда развития теоретической физики и математики «БАЗИС» и хотел бы поблагодарить жюри и спонсоров конкурса.

\end{document}